\titleformat*{\section}{\Large\bfseries}
\titleformat*{\subsection}{\large\bfseries}
\titleformat*{\subsubsection}{\large\bfseries}
\titleformat*{\paragraph}{\large\bfseries}
\titleformat*{\subparagraph}{\large\bfseries}
\newtheorem{teo}{Theorem}[section]
\newtheorem{lema}[teo]{Lemma}
\newtheorem{cor}[teo]{Corollary}
\newtheorem{prop}[teo]{Proposition}
\newtheorem{defi}[teo]{Definition}
\newtheoremstyle{mytheoremstyle} 
{\topsep}                    
{\topsep}                    
{}                   
{}                           
{\scshape}                   
{.}                          
{.5em}                       
{}  
\theoremstyle{mytheoremstyle} \newtheorem{nota}{Remark}[section]
\theoremstyle{mytheoremstyle} \newtheorem{exemplo}{Example}[section]
\theoremstyle{mytheoremstyle} \newtheorem*{notacao}{Notation}
\numberwithin{equation}{section}
\newcommand{\real}{\mathbb{R}}
\newcommand{\complex}{\mathbb{C}}
\newcommand{\nat}{\mathbb{N}}
\newcommand \ben {\begin{equation}}
\newcommand \een {\end{equation}}
\newcommand \be {\begin{equation*}}
\newcommand \ee {\end{equation*}}
\newcommand \bi {\begin{itemize}}
\newcommand \ei {\end{itemize}}
\newcommand{\ubar}[1]{\underaccent{\bar}{#1}}
\DeclareMathOperator*{\parteim}{Im}
\title{\textbf{Spatial plane waves for the nonlinear Schrödinger equation: local existence and stability results}}
\author{Simão Correia and Mário Figueira}
\begin{document}

\maketitle
\begin{abstract}
We consider the Cauchy problem for the nonlinear Schrödinger equation on $\real^2$, $iu_t + u_{xx} + u_{yy} + \lambda|u|^\sigma u =0$, $\lambda\in \real$, $\sigma>0$. We introduce new functional spaces over which the initial value problem is well-posed. Their construction is based on \textit{spatial plane waves} (cf. \cite{correiafigueira}). These spaces contain $H^1(\real^2)$ and do not lie within $L^2(\real^2)$. We prove several global well-posedness and stability results over these new spaces, including a new global well-posedness result of $H^1$ solutions with indefinitely large $H^1$ and $L^2$ norms. Some of these results are proved using a new functional transform, the \textit{plane wave transform}. We develop a suitable theory for this transform, prove several properties and solve classical linear PDE's with it, highlighting its wide range of application.
\vskip10pt
\noindent\textbf{Keywords}: nonlinear Schrödinger equation; local well-posedness; stability; spatial plane waves; integral transform.
\vskip10pt
\noindent\textbf{AMS Subject Classification 2010}: 35B35, 35C15, 35Q55, 35A01, 44A05.
\end{abstract}
\section{Introduction}
In this work, we consider the initial value problem for the nonlinear Schrödinger equation in two spatial dimensions
\begin{equation}\label{NLS}\tag{NLS}
\left\{\begin{array}{l}
iu_t + u_{xx} + u_{yy} + \lambda|u|^\sigma u =0, \quad u=u(t,x,y),\ (x,y)\in \real^2\\
u(0,x,y)=u_0(x,y),\quad \sigma>0,\ \lambda\in \real.
\end{array}\right..
\end{equation}
This classical equation has been studied intensively for the past fifty years and many problems regarding local well-posedness, global existence, blowup and asymptotic behaviour have very complete answers (see the monographs \cite{cazenave}, \cite{sulem}, \cite{tao} and references therein). The most standard framework where one studies this initial value problem is based on the following $H^1$ local well-posedness result: for any $u_0\in H^1(\real^2)$, there exists a unique maximal solution $u\in C([0,T(u_0)), H^1(\real^2))$ of \eqref{NLS}, which depends continuously on the initial data. Moreover, if $T(u_0)<\infty$, one has $\|\nabla u(t)\|_2\to \infty$ as $t\to T(u_0)$. The $H^1$ framework allows one to used certain conserved quantities (mass, energy, variance, etc.) to obtain precise results on the dynamical behaviour of solutions. Apart from this, few nonstandard frameworks have been developed (see, for example, \cite{gallo}, for a theory on Zhidkov spaces).

Here, we shall introduce new functional spaces and study local well-posedness, global existence and stability (in a way that will be clear further ahead). These spaces do not lie within $L^2(\real^2)$: in fact, the first one will lie in $L^\infty(\real^2)$, but not in $L^p(\real^2)$, for any $p<\infty$; the second will lie in $L^4(\real^2)\cap L^\infty(\real^2)$, but not in $L^p(\real^2)$, $p<4$. The most simple example of these spaces appeared for the first time in a previous work, \cite{correiafigueira}, in the context of the hyperbolic nonlinear Schrödinger equation. We shall generalize the results of \cite{correiafigueira} in the context of the (NLS) for two reasons: to our knowledge, there are no results of this type for (NLS); secondly, even though one may prove similar results for more general equations, that would increase needlessly the complexity of the exposition and deviate the attention of the reader from the essential ideas.

The first step is to take any $c\in\real$ and consider solutions of (NLS) of the form $u(t,x,y)=f(t,x-cy)$ (we call these solutions \textit{spatial plane waves}: the number $c$ is the \textit{speed} and the function $f$ is the \textit{profile} of the wave). Introducing this ansatz, one arrives to
$$
if_t + (1+c^2)f_{zz} + \lambda|f|^\sigma f=0,
$$
which one may solve using the standard $H^1$ theory. It is clear that spatial plane waves are not $L^p$ solutions of (NLS), for any $p<\infty$, even though they lie in $L^1_{loc}(\real^2)$. Based on these solutions, consider the space of spatial plane waves
\begin{equation}
X_c=\left\{u\in L^1_{loc}(\real^2): \exists f\in H^1(\real): u(x,y)=f(x-cy)\ a.e. \right\}.
\end{equation}
Then the initial value problem for (NLS) is well-posed over $X_c$, simply because the problem reduces to the local theory in one spatial dimension. Now consider
$$
E=H^1(\real^2)\oplus X_c.
$$
Notice that, due to the lack of decay of elements in $X_c$, the sum is indeed a direct sum. Then the initial value problem for $u_0=v_0+\phi_0\in E$ can be proven to be equivalent to the initial value problem for the system
\begin{equation}
\left\{\begin{array}{l}
iv_t + v_{xx} + v_{yy} + \lambda|v+\phi|^\sigma(v+\phi) - \lambda|\phi|^\sigma\phi=0,\ \phi(t,x,y)=f(t,x-cy)\\
v(0)=v_0\\
if_t + (1+c^2)f_{zz} + \lambda|f|^\sigma f=0\\
f(0)=f_0
\end{array}\right.
\end{equation}
where $f_0$ is the profile of $\phi_0$. Notice that the second equation is independent on $v$. This means that one may solve it and introduce the solution onto the first equation. Thus, to prove local well-posedness on $E$, it suffices to show an $H^1$ local well-posedness result for the first equation. This is achieved by making the crucial observation that the nonlinear terms lie in $H^{-1}$ (even though they involve $\phi$).

With such a local well-posedness result, consider the following situation: take a spatial plane wave $\phi$ with initial data $\phi_0$, and assume $\phi$ is globally defined. If one introduces an $H^1$ perturbation on the initial data, $\phi_0+\epsilon v_0$, the corresponding solution is given by $u=\phi+v_\epsilon$ (since $\phi$ only depends on the plane wave part of the initial data). Then a natural question is the following: 
\begin{center}
\textit{Stability problem: if $\epsilon$ is sufficiently small, does $u$ stay close to $\phi$?}
\end{center}

 This question corresponds to a global existence result for $v$ that insures that $v$ stays small for all times. Results of such nature are indeed valid for (NLS) in $H^1$ (see \cite{cazenaveweissler}, \cite{hayashi}, \cite{hayashisaitoh}, \cite{kato1}, \cite{nakamuraozawa}) in the $L^2$-supercritical case. Here, however, the question is not that clear: first, if one develops the nonlinear terms in the equation for $v$, one sees that lower order terms in $v$ are present; second, the plane wave component $\phi$, which even has "infinite mass" and "infinite energy" (due to the lack of integrability) might act as a forcing term, pushing $v$ to grow indefinitely. In \cite{correiafigueira}, such a result was proven for $\sigma$ an even power greater or equal to 4, in the context of the hyperbolic nonlinear Schrödinger equation.
\vskip15pt
Now we advance to the next level of complexity. Take a sequence of wave speeds $\ubar{c}=\{c_n\}_{n\in \nat}$, with $c_i\neq c_j$, $i\neq j$, and consider the space
\begin{align*}
X_{\ubar{c}}=\Big\{\phi\in L^1_{loc}(\real^2): \phi(x,y)=\sum_{n\ge 1} f_n(x-c_ny), ((1+c_n^2)f_n)_{n\in\nat}\in l^1(H^2(\real)) \Big\}.
\end{align*}
endowed with the induced norm (which, as we will check, is well-defined). As before, we look for a local well-posedness result in
$$
E_{\ubar{c}}=H^1(\real^2)\oplus X_{\ubar{c}},
$$
which turns out to be equivalent to a local well-posedness result for the infinite system
\begin{equation}
\left\{\begin{array}{l}
iv_t + v_{xx} + v_{yy} + \lambda|v+\phi|^\sigma(v+\phi) - \sum_{n\ge 1}\lambda|\phi_n|^\sigma\phi_n=0,\quad \phi_n(t,x,y)=f_n(t,x-c_ny)\\

i((f_n)_t + (1+c_n^2)(f_n)_{zz} + \lambda|f_n|^\sigma f_n=0, \ n\in\nat\\

\end{array}\right..
\end{equation}
The equations for the profiles $f_n$ are solved using the $H^2(\real)$ local well-posedness result (here, one must be careful to ensure that \textit{all} profiles exist up to some time $T>0$). Once again, it remains to prove that one may solve the equation for $v$ in $H^1(\real^2)$, which amounts to check that the nonlinear terms are in $H^{-1}(\real^2)$. Under the additional hypothesis $\sigma\ge 1$, this can be shown to be true: heuristically, if one develops the nonlinear terms, one obtains some terms with $v$, which are well-behaved, and some products of diferent $\phi_n$'s. Remarkably, these terms lie in $L^2(\real^2)$: take, for example, $\phi_j\phi_k$, $j\neq k$. Then
\begin{align*}
\left(\int_{\real^2} |\phi_j(x,y)|^{2}|\phi_k(x,y)|^2dxdy\right)^{1/2}&=\left(\int_{\real^2} |f_j(x-c_jy)|^{2}|f_k(x-c_ky)|^2dxdy\right)^{1/2}\\&= \frac{1}{|c_j-c_k|^{1/2}}\left(\int_{\real^2} |f_j(w)|^{2}|f_k(z)|^2dwdz\right)^{1/2} \\& \le  \frac{1}{|c_j-c_k|^{1/2}}\|f_j\|_{L^2}\|f_k\|_{L^2}.
\end{align*}
However, since one has an infinite sum of such terms, one must be able to control
$$
\sum_{j\neq k} \frac{1}{|c_j-c_k|^{1/2}}\|f_j\|_{L^2}\|f_k\|_{L^2}.
$$
The above quantity turns out to be preserved by the flow of the infinite system, since one has conservation of the $L^2$ norm of each $f_n$, and therefore it can controlled by the initial data. Hence we shall restrict ourselves to
$$
A_{\ubar{c}}=H^1(\real^2)\oplus Y_{\ubar{c}},\quad Y_{\ubar{c}}=\left\{\phi\in X_{\ubar{c}}: \sum_{i\neq j}\frac{\|f_j\|_{L^2}\|f_k\|_{L^2}}{|c_j-c_k|^{1/2}}<\infty \right\}.
$$
One finally concludes that it is possible to solve the equation for $v$ and arrives to the local well-posedness result over $A_{\ubar{c}}$:
\begin{teo}\label{existnumeravel}
Fix $\sigma\ge 1$ and $u_0\in A_{\ubar{c}}$. Then there exists a unique maximal solution $u\in C([0,T), A_{\ubar{c}})\cap C^1((0,T),E_{\ubar{c}}')$ (cf. \eqref{defiEc}) of (NLS) such that $u(0)=u_0$. The solution depends continuously of $u_0$. Furthermore, if $T<\infty$, then
$$
\|u(t)\|_{E_{\ubar{c}}}\to\infty,\quad t\to T.
$$
\end{teo}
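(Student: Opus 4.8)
The plan is to decouple the infinite system exactly as the introduction indicates: first solve for the profiles $f_n$, which do not depend on $v$, assemble $\phi=\sum_{n\ge1}\phi_n$, and only then solve the single equation for $v$ with $\phi$ treated as a known (time-dependent) coefficient. For each $n$ I would solve $i(f_n)_t+(1+c_n^2)(f_n)_{zz}+\lambda|f_n|^\sigma f_n=0$ in $H^2(\real)$ by the standard one-dimensional theory; since $\sigma\ge1$ the nonlinearity is $C^1$ and persistence of $H^2$ regularity holds. The delicate point, flagged in the introduction, is that \emph{all} profiles must survive on a common interval $[0,T)$. After the rescaling $s=(1+c_n^2)t$ the equation becomes $i(f_n)_s+(f_n)_{zz}+\tfrac{\lambda}{1+c_n^2}|f_n|^\sigma f_n=0$, whose nonlinear coefficient is uniformly bounded, so the local $H^2$ existence time in $s$ is bounded below by a decreasing function of $\|f_n(0)\|_{H^2}$; converting back, the lifespan in $t$ is large precisely when $\|f_n(0)\|_{H^2}$ is small. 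Because $((1+c_n^2)f_n(0))_n\in l^1(H^2)$ forces $\|f_n(0)\|_{H^2}\to0$, only finitely many profiles can have short lifespans, and taking the minimum yields $T>0$.

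Next I would assemble $\phi$ and verify it stays in the right space. Conservation of mass gives $\|f_n(t)\|_{L^2}=\|f_n(0)\|_{L^2}$, so the defining series of $Y_{\ubar{c}}$ is constant in time and hence finite on all of $[0,T)$. Combined with the tail smallness from the first step, which controls $\sum_{n\ge1}(1+c_n^2)\|f_n(t)\|_{H^2}$ uniformly on compact subintervals, this shows $\phi\in C([0,T),X_{\ubar{c}})$ with the $Y_{\ubar{c}}$ structure preserved along the flow.

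The heart of the argument is the equation for $v$. Writing it in Duhamel form $v(t)=e^{it\Delta}v_0+i\lambda\int_0^t e^{i(t-s)\Delta}G(v(s))\,ds$ with $G(v)=\lambda|v+\phi|^\sigma(v+\phi)-\sum_{n\ge1}\lambda|\phi_n|^\sigma\phi_n$, I would run a contraction in $C([0,\tau],H^1(\real^2))$ for small $\tau$ using Strichartz estimates, so everything reduces to showing $G(v)\in H^{-1}(\real^2)$ with good bounds. I split $G$ into the terms containing at least one factor $v$, handled by the standard $H^1$ estimates since $\phi\in L^\infty$, and the pure-$\phi$ remainder $\Psi=|\phi|^\sigma\phi-\sum_{n\ge1}|\phi_n|^\sigma\phi_n$. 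Telescoping with the partial sums $S_N=\sum_{n\le N}\phi_n$ and the elementary inequality $\big||a+b|^\sigma(a+b)-|a|^\sigma a-|b|^\sigma b\big|\le C\,|a|\,|b|\,(|a|^{\sigma-1}+|b|^{\sigma-1})$, valid for $\sigma\ge1$, rewrites $\Psi$ as a sum of products of \emph{distinct} profiles $\phi_m\phi_n$ (times bounded factors). Each such product is estimated in $L^2$ by $|c_m-c_n|^{-1/2}\|f_m\|_{L^2}\|f_n\|_{L^2}$, exactly as in the introduction, with the extra $(|a|^{\sigma-1}+|b|^{\sigma-1})$ factors absorbed into $L^\infty$ norms $\|\phi_k\|_\infty\le C\|f_k\|_{H^1}$. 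Summing, the whole series is controlled by the $Y_{\ubar{c}}$ quantity, so $\Psi\in L^2(\real^2)\subset H^{-1}(\real^2)$. Making this telescoping estimate uniform in $N$, and checking the analogous Lipschitz bound needed for the contraction and for continuous dependence, is the step I expect to be the main obstacle.

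Finally, the contraction yields a unique local solution $v\in C([0,\tau],H^1(\real^2))$ depending continuously on $v_0$, and setting $u=v+\phi$ gives the solution on $A_{\ubar{c}}$; uniqueness and continuous dependence on $u_0$ follow by combining the continuity of the profile flow with the same estimates for $v$. The maximal time $T$ is the minimum of the common profile lifespan and the $H^1$-lifespan of $v$. If $T<\infty$, then either some profile forces $\|\phi\|_{X_{\ubar{c}}}\to\infty$ or the $H^1$-blow-up alternative for $v$ gives $\|v\|_{H^1}\to\infty$; since the sum defining $E_{\ubar{c}}=H^1(\real^2)\oplus X_{\ubar{c}}$ is direct, in either case $\|u(t)\|_{E_{\ubar{c}}}\to\infty$ as $t\to T$, which is the asserted blow-up criterion.
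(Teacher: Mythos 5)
Your overall strategy --- solve the profile equations first, observe that the $Y_{\ubar{c}}$ quantity is conserved by mass conservation, then close a fixed point for $v$ in $H^1(\real^2)$ after checking that the pure-$\phi$ remainder lies in $L^2\hookrightarrow H^{-1}$ via telescoping and the $|c_j-c_k|^{-1/2}\|f_j\|_{L^2}\|f_k\|_{L^2}$ bound --- is exactly the paper's. However, there are two genuine gaps.

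First, the uniform lifespan of the profiles. You rescale time, $s=(1+c_n^2)t$, obtaining $i(g_n)_s+(g_n)_{zz}+\tfrac{\lambda}{1+c_n^2}|g_n|^\sigma g_n=0$ with unchanged data norm, and invoke a lower bound on the existence time in $s$ depending only on $\|f_n(0)\|_{H^2}$. Converting back to $t$ divides that time by $1+c_n^2$; since no boundedness of $\{c_n\}$ is assumed, this quotient can tend to $0$ even though $\|f_n(0)\|_{H^2}\to0$, so "converting back, the lifespan in $t$ is large" does not follow from the bound you quoted. To close the step you must track how the existence time grows as the coefficient $\lambda/(1+c_n^2)$ shrinks (the contraction argument gives $S_n\gtrsim(1+c_n^2)\,|\lambda|^{-1}\|f_n(0)\|_{H^2}^{-\sigma}$, which does cancel the offending factor), or, as the paper does, rescale \emph{space} instead: $h_n(z)=f_n(\sqrt{1+c_n^2}\,z)$ turns every profile equation into the same $n$-independent equation with data $\|h_n(0)\|_{H^2}\le(1+c_n^2)^{3/4}\|f_n(0)\|_{H^2}\to0$ and no change of time variable, so the lifespans are bounded below uniformly at once.

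Second, uniqueness. Your construction produces the unique solution of the \emph{decoupled system}, hence one solution of (NLS) in $A_{\ubar{c}}$; but the theorem asserts uniqueness among all $A_{\ubar{c}}$-valued solutions of (NLS), and "combining the continuity of the profile flow with the same estimates for $v$" does not yield this. One must first show that any solution $u=v+\phi\in C([0,T),A_{\ubar{c}})$ of (NLS) necessarily decouples, i.e.\ that $v$ and the profiles separately satisfy the system. The paper obtains this by writing the equation in the form (an $H^{-1}(\real^2)$ expression) $=-$ (an $X'_{\ubar{c}}$ expression) and invoking $H^{-1}(\real^2)\cap X'_{\ubar{c}}=\{0\}$ (Lemma \ref{somadirecta}) together with the injectivity of the profile representation (Lemma \ref{injectiva}). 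You appeal to the directness of the sum only for the blow-up alternative, whereas this equivalence step is where it is genuinely needed.
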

As in the single plane wave case, the stability problem may be studied. Here, we present the following result:
\begin{teo}\label{estabilidadenumeravel}
Set $\sigma\ge4$ even. Given $M>1$, there exist $\epsilon(M)>0$ and $\delta=\delta(\epsilon,M)$, with $\delta(\epsilon,M)\to0$ as $\epsilon\to 0$, such that, if $\phi_0\in Y_{\ubar{c}}$ and $v_0\in H^1(\real^2)$ satisfy
$$
\sum_{n\ge 1}\|z(f_0)_n\|_{L^2} + \|\partial_z (f_n)_0\|_{L^1} + \sum_{j\neq k} \frac{(1+c_j^2)^{1/2}\|\partial_z(f_0)_j\|_{L^2} + \|(f_0)_j\|_{L^2}}{|c_j-c_k|^{1/2}}\|(f_0)_k\|_{L^2}<M,
$$
$$
\|\phi_0\|_{X_{\ubar{c}}}<\epsilon, \|v_0\|_{H^1}<\epsilon,\ \epsilon<\epsilon(M),
$$
then the solutions of (NLS) $u$ and $\tilde{u},$ with initial data $v_0+\phi_0$ and $\phi_0$, respectively, are both globally defined and satisfy
$$
\|u-\tilde{u}\|_{L^\infty((0,\infty); H^1(\real^2))}\le \delta(\epsilon,M).
$$
\end{teo}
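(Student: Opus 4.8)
The plan is to exploit the direct-sum structure of $A_{\ubar c}$ to reduce everything to an $H^1$ estimate. Since the plane-wave component of a solution depends only on the plane-wave part of the data, both $u$ and $\tilde u$ carry the \emph{same} profile part $\phi=\sum_{n}\phi_n$, where each $f_n$ solves the decoupled one-dimensional equation $i(f_n)_t+(1+c_n^2)(f_n)_{zz}+\lambda|f_n|^\sigma f_n=0$. Writing $u=\phi+v$ and $\tilde u=\phi+\tilde v$ with $v(0)=v_0$, $\tilde v(0)=0$, the difference $w:=u-\tilde u=v-\tilde v$ lies in $H^1(\real^2)$ with $w(0)=v_0$, and it solves the genuinely $H^1$ equation $iw_t+\Delta w+\lambda(|u|^\sigma u-|\tilde u|^\sigma\tilde u)=0$ with no external forcing. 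Thus it suffices to prove that $v$ and $\tilde v$ stay uniformly small in $H^1$ for all times, whence $\|w\|_{L^\infty H^1}\le\|v\|_{L^\infty H^1}+\|\tilde v\|_{L^\infty H^1}$; global existence of both solutions then follows from the blow-up alternative in Theorem \ref{existnumeravel}, once the $H^1$ bound on $v,\tilde v$ and the uniform bound on $\|\phi(t)\|_{X_{\ubar c}}$ are in hand.

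The first technical step is a dispersive analysis of the profiles. Each $f_n$ has small $H^1(\real)$ data (since $\|\phi_0\|_{X_{\ubar c}}<\epsilon$), and because $\sigma\ge 4$ is $L^2$-critical or supercritical in one dimension, the standard one-dimensional theory gives global profiles whose mass is conserved and whose $\dot H^1$ norm stays comparable to that of the data. Combining the pseudoconformal decay law (whose constant is governed by $\|z(f_0)_n\|_{L^2}$) with the time-uniform embedding $\|f_n(t)\|_{L^\infty}\le\tfrac12\|\partial_z f_n(t)\|_{L^1}$ (governed by $\|\partial_z(f_0)_n\|_{L^1}$), I would establish $\|f_n(t)\|_{L^\infty}+\|\partial_z f_n(t)\|_{L^\infty}\le C(1+t)^{-1/2}$ with constants summable in $n$ thanks to the first sum in the hypothesis. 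Two consequences are then read off: first, $\int_0^\infty\big(\sum_n\|f_n(t)\|_{L^\infty}\big)^\sigma\,dt<\infty$, where $\sigma\ge4$ is exactly what makes $t^{-\sigma/2}$ integrable; second, the interaction forcing $G:=\lambda\big(|\phi|^\sigma\phi-\sum_n|\phi_n|^\sigma\phi_n\big)$, whose terms all contain at least two distinct profiles, obeys $\int_0^\infty\|G(t)\|_{H^1(\real^2)}\,dt<\infty$ (its decay is $t^{-(\sigma-1)/2}$, integrable since $\sigma>3$). The $H^1$-norm of $G$ is estimated by the product bound from the Introduction, placing one low factor in $L^2$ and the remaining factors in $L^\infty$; this is precisely where the flow-invariant double sum $\sum_{j\neq k}\frac{(1+c_j^2)^{1/2}\|\partial_z f_j\|_{L^2}+\|f_j\|_{L^2}}{|c_j-c_k|^{1/2}}\|f_k\|_{L^2}$ (bounded by $M$) enters, the derivative factor carrying the weight $(1+c_j^2)^{1/2}$.

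With these bounds, the core is a continuity (bootstrap) argument for $v$, and identically for $\tilde v$, run on the maximal existence interval. Pairing the $v$-equation with $\bar v$ and with $\overline{\Delta v}$ and taking imaginary and real parts respectively, the pure-$v$ nonlinearity contributes nothing to the mass and only controllable terms to the $\dot H^1$ energy, so one obtains a differential inequality of the form $\frac{d}{dt}\|v\|_{H^1}^2\le a(t)\|v\|_{H^1}^2+b(t)\|v\|_{H^1}+(\text{higher order in }v)$, where $a(t)$ collects the plane-wave coefficients (hence $\int_0^\infty a<\infty$ and small) and $b(t)=C\|G(t)\|_{H^1}$ (hence $\int_0^\infty b<\infty$ and small). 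The genuinely delicate terms are the couplings containing a single factor of $\phi$, for which the background decays only at the one-dimensional rate $t^{-1/2}$, which is \emph{not} time-integrable on its own; these I would control by pairing the profile decay with the space--time (Strichartz) bounds on $v$, which are finite and $O(\epsilon)$ for small $H^1$ data since $\sigma\ge 4$ lies in the $\dot H^{s_c}$-subcritical range ($s_c=1-2/\sigma\in[1/2,1)$). Interpolating the Strichartz norms against the uniform $H^1$ bound restores enough time-integrability to absorb these terms. A Gronwall argument then yields $\|v(t)\|_{H^1}\le C(M)\,\epsilon$ uniformly, closing the bootstrap; applied to $\tilde v$ (zero data, driven only by $G$) it gives $\|\tilde v(t)\|_{H^1}\le C(M)\,\epsilon$ as well.

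Conclusion and main obstacle: adding the two bounds gives $\|u-\tilde u\|_{L^\infty((0,\infty);H^1)}\le\delta(\epsilon,M)$ with $\delta(\epsilon,M)=C(M)\,\epsilon\to0$ as $\epsilon\to0$, and the uniform control of $\|v(t)\|_{H^1}$ and $\|\phi(t)\|_{X_{\ubar c}}$ rules out finite-time blow-up via Theorem \ref{existnumeravel}. The main obstacle is that $\phi$ has infinite mass and energy, so none of the usual conservation laws are available at the level of the full solution; all uniform-in-time control must instead be extracted from the flow-invariant interaction sum in the hypothesis and from summable dispersive decay of the profiles. The sharpest difficulty within this is reconciling the slow $t^{-1/2}$ decay of the one-dimensional background with the need for time-integrable coefficients, under data assumptions that place no weight on $v_0$; this forces the simultaneous use of energy and Strichartz information for $v$ and is the reason the argument requires $\sigma\ge4$.
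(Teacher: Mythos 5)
Your overall architecture coincides with the paper's: both solutions share the plane-wave component $\phi$, the weighted hypotheses give dispersive decay of the profiles and control of the interaction forcing through the flow-invariant double sum, and a bootstrap yields uniform $H^1$ smallness of $v$ and $\tilde v$, finished by the triangle inequality. Two steps as written, however, would fail. First, you bound $\|f_n(t)\|_{L^\infty}$ by $\tfrac12\|\partial_z f_n(t)\|_{L^1}$ and assert that the latter is ``governed by $\|\partial_z(f_0)_n\|_{L^1}$''; the $W^{1,1}$ norm is not propagated by the nonlinear flow, and the hypothesis controls it only at $t=0$. The paper uses $\|\partial_z(f_0)_n\|_{L^1}$ exclusively inside the $L^1\to L^\infty$ dispersive estimate for the \emph{linear} evolution of the initial data in Duhamel's formula, and obtains the $t^{-1/2}$ decay of $\|f_n(t)\|_{L^\infty}$ from the weighted-$L^2$ estimate $\|h_n(t)\|_{L^\infty}\lesssim t^{-1/2}\|z(h_0)_n\|_{L^2}^{1/2}\|(h_0)_n\|_{L^2}^{1/2}$ instead; it also proves only a \emph{uniform} bound on $\|\partial_z f_n(t)\|_{L^\infty}$, not the $t^{-1/2}$ decay you claim (which you do not actually need). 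Second, your differential inequality $\frac{d}{dt}\|v\|_{H^1}^2\le a(t)\|v\|_{H^1}^2+b(t)\|v\|_{H^1}+(\text{h.o.t.})$ cannot be closed at the $\dot H^1$ level: the pure-$v$ contribution $\parteim\int\nabla(|v|^\sigma v)\cdot\nabla\bar v$ requires something like $\|v\|_{L^\infty}^\sigma\|\nabla v\|_{L^2}^2$, and $H^1(\real^2)\not\hookrightarrow L^\infty(\real^2)$, so these terms are not ``controllable'' by the $H^1$ norm alone. The paper avoids the energy identity entirely and runs the whole bootstrap on the Strichartz functional $h(t)=\|v\|_{L^\infty H^1}+\sum_i\|v\|_{L^{\gamma_i}W^{1,\rho_i}}$, estimating each term $g_i$ of the expanded nonlinearity in dual Strichartz norms; since you already invoke Strichartz for the delicate couplings, the repair is to treat the pure power term the same way, which brings your argument back to the paper's proof.
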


\vskip15pt

We now arrive to the last generalization, more complex and with interesting properties. The idea is simply to pass from a discrete sum of plane waves to an \textit{integration over a continuum of plane waves}. That is, we consider
$$
X=\left\{\phi\in L^1_{loc}(\real^2): \phi(x,y)=\int_{\real} f(x-cy,c), f\in L_c^1(\real; H_z^1(\real))\cap L^\infty_c(\real,L^2_z(\real)) \right\},
$$
endowed with the norm $\|\phi\|=\|f\|_{L^1_c(H^1_z)}+\|f\|_{L^\infty_c(L^2_z)}$. One may actually define an integral transform, the \textit{plane wave transform}, mapping functions in two \textit{speed} variables to functions in two \textit{physical} variables
\begin{equation}\label{transformadaT}
(Tf)(x,y)=\int f(x-cy,c)dc.
\end{equation}
Before we study the initial value problem for (NLS) in this context, we shall study some very interesting properties related to the plane wave transform. Among them, one may prove that:
\begin{itemize}
\item $Tf\in L^p(\real^2)$, $p\ge 2$ under suitable conditions on $f$ (Proposition \ref{integrabilidadeLp} and Corollary \ref{integrabilidadeL2});
\item $Tf\notin L^2(\real^2)$ if $f\in C_0(\real^2)$ is positive (Corollary \ref{naointegrabilidadeL2});
\item The convolution of two functions, the Fourier transform and the Laplace transform may be obtained using the plane wave transform transform (Corollary \ref{convolucao} and Examples \ref{calor} and \ref{schrodinger});
\item Several classical linear equations, such as the heat equation, the Schrödinger equation and the wave equation, may be solved by means of this transform (Section \ref{resolverequacoes}).
\end{itemize}

\begin{nota}
One should regard all the previous definitions in analogy to the Fourier series and transform: one starts with a simple periodic function with a given frequency, superposes a numerable family of functions with different frequencies to arrive to the Fourier series and passes to the continuous case to obtain the Fourier transform, where one also has the concepts of physical and frequency variables. 

Evidently, our construction is not the same as the Fourier one and many properties will differ. One aspect is that, while the Fourier construction makes all sense in one variable (and its multidimensional version is simply the application of the one-dimensional case to each variable), the plane wave construction needs (at least) a two-dimensional setting. In another perspective, the Fourier transform is based on the solutions of the ODE
$$
u''(x) + k^2u(x) = 0,
$$
while the plane wave theory is based on solutions of the transport equation
$$
u_y(x,y)+cu_x(x,y)=0.
$$
\end{nota}

With this new transform in hand, we try to obtain some results in the spirit of the numerable case. That is, look for local well-posedness and stability results in $E=H^1(\real^2) + X$. The main difference is that now the (NLS) may be decoupled into
\begin{equation}
\left\{\begin{array}{l}
iv_t + v_{xx} + v_{yy} + \lambda|v+\phi|^\sigma(v+\phi) =0\\
v(0)=v_0\\
i\phi_t + \phi_{xx} + \phi_{yy}=0\\
\phi(0)=\phi_0
\end{array}\right..
\end{equation}
Surprisingly, when one passes to the continuous case, there exists a decoupling such that the equation for the plane wave component is linear. We now state our main results.

\begin{teo}\label{existenciacontinuo}
Fix $\sigma\ge 1$ and $u_0\in E$. Then there exists a unique maximal solution $u\in C([0,T), E)$ of (NLS) (cf. definition \ref{defisolucao}) such that $u(0)=u_0$. The solution depends continuously on the initial data. Furthermore, if $T<\infty$, then
$$
\|u(t)\|_E\to\infty,\quad t\to T.
$$
\end{teo}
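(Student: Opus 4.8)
The plan is to reduce the Cauchy problem on $E=H^1(\real^2)+X$ to the decoupled system displayed above and to solve it in two stages. First I would fix the notion of solution of definition \ref{defisolucao}: writing $u_0=v_0+\phi_0$ with $v_0\in H^1(\real^2)$ and $\phi_0=Tf_0\in X$ (the decomposition being unique, as the sum $H^1(\real^2)+X$ is direct by the same lack-of-decay argument used for $X_c$), a function $u=v+\phi$ solves (NLS) precisely when $\phi$ solves the free Schrödinger equation $i\phi_t+\phi_{xx}+\phi_{yy}=0$ with $\phi(0)=\phi_0$ and $v$ solves $iv_t+v_{xx}+v_{yy}+\lambda|v+\phi|^\sigma(v+\phi)=0$ with $v(0)=v_0$. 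Since the first equation is linear and independent of $v$, it can be solved once and for all and then inserted into the second.

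The key point in the linear stage is that the free flow leaves $X$ invariant and acts isometrically on it. Indeed, if $\phi_0=Tf_0$, set $f(t,z,c)=\big(e^{it(1+c^2)\partial_{zz}}f_0(\cdot,c)\big)(z)$ and $\phi(t)=Tf(t)$. Since $\phi_t=\int i(1+c^2)f_{zz}\,dc$ while $\phi_{xx}+\phi_{yy}=\int(1+c^2)f_{zz}\,dc$, one checks directly that $i\phi_t+\phi_{xx}+\phi_{yy}=0$. Because $e^{it(1+c^2)\partial_{zz}}$ is a Fourier multiplier of modulus one, it preserves $\|f_0(\cdot,c)\|_{H^1_z}$ and $\|f_0(\cdot,c)\|_{L^2_z}$ for a.e.\ $c$; integrating and taking suprema in $c$ shows that $\|f(t)\|_{L^1_c(H^1_z)}$ and $\|f(t)\|_{L^\infty_c(L^2_z)}$ are constant in $t$. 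Hence $\phi\in C(\real,X)$ is globally defined with $\|\phi(t)\|_X=\|\phi_0\|_X$ for all $t$, and by the integrability properties of the plane wave transform (Proposition \ref{integrabilidadeLp} and Corollary \ref{integrabilidadeL2}) one has $\sup_t\|\phi(t)\|_{L^4\cap L^\infty}<\infty$, together with bounds on the relevant spatial derivatives of $\phi$.

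It then remains to solve the semilinear equation for $v$ with the now-known coefficient $\phi$. I would cast it in Duhamel form $v(t)=e^{it\Delta}v_0+i\lambda\int_0^t e^{i(t-s)\Delta}\big(|v+\phi|^\sigma(v+\phi)\big)(s)\,ds$ and run a contraction argument in a Strichartz space $C([0,T],H^1)\cap L^q([0,T],W^{1,r})$ for a suitable admissible pair $(q,r)$, exactly as in the standard $H^1$ theory (cf.\ \cite{cazenave}, \cite{cazenaveweissler}). The decisive point, as in the numerable case, is that the nonlinear term $|v+\phi|^\sigma(v+\phi)$ lies in $H^{-1}(\real^2)$ and depends Lipschitz-continuously on $v$ on bounded sets: expanding the nonlinearity, the purely $v$ terms are handled by the classical subcritical estimates, while the terms carrying factors of $\phi$ are estimated by Hölder using $\phi\in L^4\cap L^\infty$, the hypothesis $\sigma\ge1$ being precisely what forces the lowest-order cross terms into $L^2(\real^2)$ (as already illustrated by the bound on products of distinct profiles). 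For $T$ small this yields a contraction, hence a unique local $v\in C([0,T],H^1)$, and the same difference estimates give uniqueness and continuous dependence on $(v_0,\phi_0)$, and thus on $u_0$.

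The hard part is the $H^1$-level nonlinear estimate, specifically the control of the terms falling on the plane wave component after differentiation, of the form $|v+\phi|^\sigma\nabla\phi$: since $\phi$ is not even in $L^2(\real^2)$, these cannot be absorbed by an energy argument and must be handled through the fine integrability of $Tf$ and its derivatives furnished by the transform theory, again using $\sigma\ge1$ to close the Hölder exponents. Once local existence is in place, the maximal solution and the blow-up alternative follow by the usual continuation argument: one defines $T$ as the supremum of existence times, and since $\phi$ is global with $\|\phi(t)\|_X\equiv\|\phi_0\|_X$, the only obstruction to extension is $\|v(t)\|_{H^1}\to\infty$. As $\|u(t)\|_E$ is comparable to $\|v(t)\|_{H^1}+\|\phi(t)\|_X$ and the second term is constant, $T<\infty$ forces $\|u(t)\|_E\to\infty$ as $t\to T$, which is the claimed statement.
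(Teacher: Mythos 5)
Your overall strategy for \emph{existence} --- solve the free equation for the plane wave part via the transform identity $S_2(t)Tf_0=T(S_1((1+c^2)t)f_0)$, note that this flow is an isometry on $X$, and then run Kato's $H^1$ theory on the equation for $v$ using the fact that $|v+\phi|^\sigma(v+\phi)\in H^{-1}(\real^2)$ for $\sigma\ge 1$ --- is exactly the paper's. But there is a genuine error at the foundation of your uniqueness and blow-up arguments: the sum $E=H^1(\real^2)+X$ is \emph{not} direct in the continuous case. Unlike $X_c$ and $X_{\ubar{c}}$, elements of $X$ can decay: Corollary \ref{integrabilidadeL2} shows $Tf\in L^2(\real^2)$ whenever the spectrum of $f$ avoids $\xi=0$, and Theorem \ref{grandesdados} explicitly constructs $\phi_0\in X\cap H^1(\real^2)$ with arbitrarily large $H^1$ norm. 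This is precisely why the paper defines $\|u\|_E$ as an infimum over all decompositions rather than via a unique one. Consequently your "precisely when" equivalence between (NLS) and the decoupled system is unjustified (the decoupling is a \emph{choice} of decomposition, not forced), and uniqueness of the pair $(v,\phi)$ for a given splitting of $u_0$ does not yield uniqueness of $u$: two different splittings of the same $u_0$ could a priori produce two different solutions. The paper closes this gap with a separate argument (Lemma \ref{solucaounica}): uniqueness is proved directly for solutions in $C([0,T],E)$ by estimating the Duhamel formula for $u_1-u_2$ with Strichartz estimates, using only the embedding $E\hookrightarrow L^4(\real^2)$. Some such argument is indispensable and is missing from your proposal.

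The same issue breaks your blow-up alternative. You assert that $\|u(t)\|_E$ is comparable to $\|v(t)\|_{H^1}+\|\phi(t)\|_X$; only the inequality $\|u(t)\|_E\le\|v(t)\|_{H^1}+\|\phi(t)\|_X$ holds, since the $E$-norm is an infimum. So $\|v(t)\|_{H^1}\to\infty$ does not by itself force $\|u(t)\|_E\to\infty$: the blow-up of one particular decomposition could in principle be an artifact of that decomposition. The paper's argument goes the other way: assuming $\|u(t)\|_E$ stays bounded by $M$ up to $T_{max}$, one picks for each $t$ a near-optimal decomposition $u(t)=\tilde v(t)+\tilde\phi(t)$ with $\|\tilde v(t)\|_{H^1}+\|\tilde\phi(t)\|_X<2M$, and restarts the local existence theory from a time $t_0$ close enough to $T_{max}$ (the local existence time depends only on these norms) to extend $u$ past $T_{max}$, a contradiction. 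You should replace your final paragraph with this re-decomposition argument. The remaining ingredients of your write-up (the isometry of $S_2(t)$ on $X$, the $L^4\cap L^\infty$ bounds on $\phi$, the role of $\sigma\ge 1$ in placing $|\phi|^{\sigma+1}$ in $L^2$) are correct and match the paper.
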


\begin{teo}\label{estabilidadecontinuo}
Set $\sigma\ge 4$ even. Given $M>1$, there exist $\epsilon(M)>0$ and $\delta=\delta(\epsilon, M)$, with $\delta(\epsilon, M)\to 0$ as $\epsilon\to 0$, such that, if $\phi_0=Tf_0\in X$ and $v_0\in H^1(\real^2)$ satisfy
$$
\left\|\frac{f_0}{(1+c^2)^{1/2}}\right\|_{L^1_c(L^1_z)} + \left\|(f_0)_z\right\|_{L^1_c(L^1_z)}+ \|f_0\|_{L^1_c(H^2_z)} + \|cf_0\|_{L^1_c(H^2_z)}<M,
$$
$$
\|\phi_0\|_{X}<\epsilon, \|v_0\|_{H^1}<\epsilon,\ \epsilon<\epsilon(M),
$$
then the solution $u$ of (NLS) with initial data $v_0+\phi_0$ is globally defined. Moreover, if $S_2$ is the free Schrödinger group in dimension two,
$$
\|u-S_2\phi_0\|_{L^\infty((0,\infty),H^1(\real^2))}\le \delta(\epsilon,M)
$$ 
\end{teo}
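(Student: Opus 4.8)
The plan is to reduce the statement to a uniform-in-time a priori bound on the $H^1$ part $v := u - S_2\phi_0$. Using the linear decoupling established above for the continuous case, $\phi(t) := S_2(t)\phi_0$ solves the free Schrödinger equation, so that $v$ solves $iv_t + v_{xx} + v_{yy} + \lambda|v+\phi|^\sigma(v+\phi) = 0$ with $v(0) = v_0$, and $u - S_2\phi_0 = v$ identically. Thus the conclusion $\|u - S_2\phi_0\|_{L^\infty((0,\infty),H^1)} \le \delta$ is precisely $\sup_t\|v(t)\|_{H^1}\le\delta(\epsilon,M)$. By Theorem \ref{existenciacontinuo}, $v$ exists on a maximal interval $[0,T)$ and obeys the blow-up alternative; I would establish the bound $\|v(t)\|_{H^1}\le\delta(\epsilon,M)$ on $[0,T)$ by a continuity (bootstrap) argument, which simultaneously forces $T=\infty$ and yields the stability estimate.

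First I would quantify the dispersion of the linear plane wave $\phi(t)$. Since the free flow preserves the plane-wave form, $\phi(t) = T[g(t,\cdot,c)]$ with $g(t,\cdot,c) = S_1((1+c^2)t)f_0(\cdot,c)$, where $S_1$ is the one-dimensional free group. Combining the one-dimensional dispersive decay $\|S_1(s)h\|_{L^\infty_z}\lesssim|s|^{-1/2}\|h\|_{L^1_z}$ with the trivial bound $\|Tg\|_{L^\infty}\le\|g\|_{L^1_c(L^\infty_z)}$ gives $\|\phi(t)\|_{L^\infty}\lesssim|t|^{-1/2}\,\|f_0/(1+c^2)^{1/2}\|_{L^1_c(L^1_z)}\lesssim M|t|^{-1/2}$, which uses exactly the first term of the $M$-bound. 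On the other hand, the $L^4$ and $L^2$ mapping properties of $T$ (Proposition \ref{integrabilidadeLp} and Corollary \ref{integrabilidadeL2}), together with conservation of each $\|g(t,\cdot,c)\|_{L^2_z} = \|f_0(\cdot,c)\|_{L^2_z}$ under $S_1$, bound $\|\phi(t)\|_{L^4}$ uniformly in time by $\|\phi_0\|_X<\epsilon$. Interpolating, for every $p\ge4$,
\begin{equation*}
\|\phi(t)\|_{L^p}\lesssim\epsilon^{4/p}\,M^{1-4/p}\,(1+|t|)^{-(1-4/p)/2},
\end{equation*}
and analogous estimates hold for $\partial_x\phi$ and $\partial_y\phi$ via the remaining hypotheses, since $\partial_x$ and $\partial_y$ replace the profile $f_0$ by $(f_0)_z$ and $-c(f_0)_z$, whence the $\|(f_0)_z\|$, $\|f_0\|_{L^1_c(H^2_z)}$ and $\|cf_0\|_{L^1_c(H^2_z)}$ terms.

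With these decay bounds I would run $H^1$ energy estimates on $v$. Since $\sigma$ is an even integer, $N := |v+\phi|^\sigma(v+\phi)$ expands as a finite sum of degree-$(\sigma+1)$ monomials in $v,\bar v,\phi,\bar\phi$; testing the $v$-equation against $\bar v$ and against $\Delta\bar v$ and taking imaginary parts, the purely-$v$ contribution is controlled by higher powers $\|v\|_{H^1}^{\sigma+2}$ (harmless in the small-data regime), the mixed contributions are at least quadratic in $v$ with $\phi$- or $\nabla\phi$-dependent coefficients, and the remaining monomials form a pure forcing. Using Hölder and $H^1(\real^2)\hookrightarrow L^q(\real^2)$ for all $q<\infty$, this produces, within the bootstrap region, a differential inequality of the form
\begin{equation*}
\frac{d}{dt}\|v(t)\|_{H^1}^2\le a(t)\,\|v(t)\|_{H^1}^2 + b(t)\,\|v(t)\|_{H^1},
\end{equation*}
where $a(t)$ gathers powers of the $\phi$- and $\nabla\phi$-norms above and $b(t)$ gathers the forcing, dominated by $\|\phi(t)\|_{L^{2(\sigma+1)}}^{\sigma+1}$ and its $\nabla\phi$-analogues. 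The decisive point is time-integrability: taking $p=2(\sigma+1)$ gives $\|\phi(t)\|_{L^{2(\sigma+1)}}^{\sigma+1}\lesssim\epsilon^2 M^{\sigma-1}(1+|t|)^{-(\sigma-1)/2}$, integrable on $(0,\infty)$ exactly when $\sigma>3$, i.e. $\sigma\ge4$ — this is where the hypothesis on $\sigma$ is used (near $t=0$ the $|t|^{-1/2}$ singularity is harmless, being replaced by the uniform bound $\|\phi(t)\|_{L^p}\lesssim\epsilon$). Consequently $\int_0^\infty a\le C(M,\epsilon)<\infty$ and $\int_0^\infty b\lesssim\epsilon^2 M^{\sigma-1}$.

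Finally I would close by Gronwall and continuity. Integrating from $\|v(0)\|_{H^1}=\|v_0\|_{H^1}<\epsilon$ gives $\|v(t)\|_{H^1}^2\le(\epsilon^2+\epsilon^2 M^{\sigma-1})\exp(\int_0^\infty a)$, so on any interval where the bootstrap hypothesis $\|v\|_{H^1}\le1$ holds one obtains $\|v(t)\|_{H^1}\le C(M)\epsilon =: \delta(\epsilon,M)$, with $\delta(\epsilon,M)\to0$ as $\epsilon\to0$; choosing $\epsilon(M)$ so small that $\delta(\epsilon,M)<1$ renders the bound self-improving, hence valid on all of $[0,T)$. The blow-up alternative of Theorem \ref{existenciacontinuo} then forces $T=\infty$, and the uniform bound on $\|v\|_{H^1}=\|u-S_2\phi_0\|_{H^1}$ is the claim. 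The main obstacle is the second step: proving dispersive decay of $\phi$ with sharp enough exponents and the consequent time-integrability of the forcing, which hinges on carefully pairing the one-dimensional dispersion of each profile with the $L^p$ estimates for $T$ and on the precise weighted norms collected in the bound by $M$; arranging that the $\nabla\phi$-forcing (not merely $\phi$) decays integrably is the most delicate accounting, and is exactly what the $H^2$- and $c$-weighted hypotheses on $f_0$ are designed to supply.
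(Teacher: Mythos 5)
Your overall architecture — reduce to the $H^1$ component $v$ with $\phi(t)=S_2(t)\phi_0$ solving the free equation, derive dispersive decay of $\phi$ and $\nabla\phi$ by combining the one-dimensional decay of $S_1((1+c^2)t)$ with the mapping properties of $T$ (this is exactly how the hypotheses on $f_0/(1+c^2)^{1/2}$, $(f_0)_z$, $f_0$ and $cf_0$ enter), then close a smallness bootstrap and invoke the blow-up alternative — coincides with the paper's, and your identification of $\sigma\ge 4$ as the integrability threshold for the forcing is correct. The genuine gap is in the mechanism you propose for the a priori bound. Testing the equation for $v$ against $\Delta\bar v$ and taking imaginary parts produces, from the purely-$v$ part of the nonlinearity, a term of the form $\parteim\int |v|^{\sigma-2}v^2(\nabla\bar v)^2$, which is of size $\int|v|^\sigma|\nabla v|^2$ and is \emph{not} controlled by $\|v(t)\|_{H^1}^{\sigma+2}$ in dimension two: $H^1(\real^2)$ does not embed into $L^\infty$, and $\nabla v$ is only known to lie in $L^2$, so no H\"older/Sobolev pairing closes this term using the instantaneous $H^1$ norm alone. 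The same obstruction hits every mixed monomial carrying two gradients of $v$ together with at least one extra power of $v$ (e.g. $\int|\phi|^{\sigma-1}|v||\nabla v|^2$). Consequently the differential inequality $\frac{d}{dt}\|v\|_{H^1}^2\le a(t)\|v\|_{H^1}^2+b(t)\|v\|_{H^1}$ cannot be established as stated, and the Gronwall step has nothing to run on.

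This is precisely why the paper does not argue through a differential inequality: it writes Duhamel's formula, introduces the space-time quantity $h(t)=\|v\|_{L^\infty((0,t),H^1)}+\sum_{i}\|v\|_{L^{\gamma_i}((0,t),W^{1,\rho_i})}$ over admissible pairs, and applies Strichartz estimates so that the problematic monomials are absorbed into norms $\|v\|_{L^{\gamma_i}(W^{1,\rho_i})}$ with $\rho_i>2$; this yields the closed inequality $h(t)\le D(\|v_0\|_{H^1}+\epsilon^{1/2}+\epsilon^{1/4}h(t)+h(t)^{3/2}+h(t)^3+h(t)^4+h(t)^5)$, after which the continuity argument you describe goes through verbatim. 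If you want to avoid Strichartz, the salvageable energy route is to multiply by $\bar v_t$ and take real parts, so that the purely-$v$ part integrates to the exact time derivative of the energy functional and only terms paired with $\nabla\bar\phi$ remain (this is what the paper does in the proof of Theorem \ref{gwp}); but that is a different computation from the one you propose, and you would still need Gagliardo--Nirenberg plus a superquadraticity argument to recover $\|\nabla v\|_{L^2}$ from the energy.
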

As a consequence of Theorem \ref{estabilidadecontinuo}, we have
\begin{teo}\label{grandesdados}
Set $\sigma\ge 4$ even. For any given $K>0$ and $M>1$, there exists $\phi_0\in X\cap H^1(\real^2)$ satisfying the conditions of Theorem \ref{estabilidadecontinuo} and
$$
\|\phi_0\|_{L^2(\real^2)}>K,\quad \|\nabla \phi_0\|_{L^2(\real^2)}>K.
$$
Consequently, for any $v_0\in H^1(\real^2)$ such that
$$
\|v_0\|_{H^1}<\epsilon(M)
$$ 
the $H^1$ solution $u$ of (NLS) with initial data $v_0+\phi_0$ is global and satisfies
$$
\|u-S_2\phi_0\|_{L^\infty((0,\infty), H^1(\real^2)}\le \delta(\epsilon(M), M).
$$
\end{teo}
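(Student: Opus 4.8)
The statement splits into the construction of a single datum $\phi_0$ and the \emph{consequently} part, the latter being immediate: once $\phi_0\in X\cap H^1(\real^2)$ is produced with $\|\phi_0\|_X<\epsilon(M)$ and with the displayed weighted quantity below $M$, every $v_0\in H^1(\real^2)$ with $\|v_0\|_{H^1}<\epsilon(M)$ satisfies the hypotheses of Theorem \ref{estabilidadecontinuo} (legitimately, since both $\|\phi_0\|_X$ and $\|v_0\|_{H^1}$ are $<\epsilon(M)$), which returns the global existence of $u$ and the estimate $\|u-S_2\phi_0\|_{L^\infty((0,\infty),H^1)}\le\delta(\epsilon(M),M)$ verbatim. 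Hence the whole task is to exhibit $\phi_0=Tf_0$ that is simultaneously small in $X$ and in the $M$-quantity but large in $L^2$ and $\dot{H}^1$.

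My plan is to work through the $L^2$-theory of the plane wave transform. I will use the identity $\|Tg\|_{L^2(\real^2)}^2=\int_{\real}\|g(\cdot,c)\|_{\dot{H}^{-1/2}_z}^2\,dc$ (Corollary \ref{integrabilidadeL2}), together with $\partial_x Tf=Tf_z$ and $\partial_y Tf=-T(cf_z)$, which yield $\|\phi_0\|_{L^2}^2=\int\|f_0(\cdot,c)\|_{\dot{H}^{-1/2}_z}^2\,dc$, $\|\partial_x\phi_0\|_{L^2}^2=\int\|f_0(\cdot,c)\|_{\dot{H}^{1/2}_z}^2\,dc$ and $\|\partial_y\phi_0\|_{L^2}^2=\int c^2\|f_0(\cdot,c)\|_{\dot{H}^{1/2}_z}^2\,dc$. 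Two structural facts drive the construction. First, by interpolation $\|\partial_x\phi_0\|_{L^2}^2\le\|f_0\|_{L^\infty_c(L^2_z)}\,\|f_0\|_{L^1_c(\dot{H}^1_z)}\le\|\phi_0\|_X^2$, so $\partial_x\phi_0$ is \emph{forced} to be small and the large gradient must come entirely from $\partial_y\phi_0$; the latter, carrying the weight $c^2$, is fed only by profiles at large speeds and is governed not by $\|\phi_0\|_X$ but by the weighted term $\|cf_0\|_{L^1_c(H^2_z)}$. Second, making $\|\phi_0\|_{L^2}$ large requires profiles very wide in $z$, so that $\|f_0(\cdot,c)\|_{\dot{H}^{-1/2}_z}\gg\|f_0(\cdot,c)\|_{H^1_z}$ (the ``infinite mass'' effect already visible for single plane waves), but such wide profiles have negligible $\dot{H}^{1/2}_z$-norm and therefore cannot also feed $\partial_y\phi_0$.

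These two requirements pull in opposite directions, and a short computation shows that a single separable packet cannot meet them under fixed $X$- and $M$-budgets as soon as $K^2\gg\epsilon(M)\,M$; this is the main obstacle. I resolve it by splitting $f_0=f^{(L)}+f^{(D)}$ with \emph{disjoint supports in $c$}. The $L^2$-packet $f^{(L)}(z,c)=\epsilon\,\mu^{1/2}w_0(\mu z)\,\chi_0(c/\delta_L)$ is placed near $c=0$, with $w_0$ of zero mean (needed for $\dot{H}^{-1/2}_z$-finiteness), $\mu\sim\epsilon^2M^2/K^4$ (a very wide profile) and speed-window $\delta_L\sim(M/K)^2$; then $\|Tf^{(L)}\|_{L^2}\sim\epsilon\mu^{-1/2}\delta_L^{1/2}\sim K$, while the weighted term $\|f^{(L)}/(1+c^2)^{1/2}\|_{L^1_c(L^1_z)}\sim\epsilon\mu^{-1/2}\delta_L\sim M$ and all remaining budget norms are small. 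The gradient-packet $f^{(D)}(z,c)=\epsilon\,v_0(z)\,\sigma_0((c-c_0)/\delta_D)$ uses a fixed $O(1)$ profile $v_0$ of zero mean at a large speed $c_0\sim K^2/(\epsilon M)$ with $\delta_D\sim(M/K)^2$; the weight $c$ in $\partial_y$ then gives $\|\partial_y Tf^{(D)}\|_{L^2}\sim\epsilon\,c_0\delta_D^{1/2}\sim K$, while $\|cf^{(D)}\|_{L^1_c(H^2_z)}\sim\epsilon\,c_0\delta_D\sim M$ stays within budget and its $L^2$-contribution $\sim\epsilon\delta_D^{1/2}$ is negligible.

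It remains to assemble the estimates. Because the supports in $c$ are disjoint, the three $L^2$-based quantities above are exactly additive, whence $\|\phi_0\|_{L^2}\ge\|Tf^{(L)}\|_{L^2}>K$ and $\|\nabla\phi_0\|_{L^2}\ge\|\partial_y Tf^{(D)}\|_{L^2}>K$, while the $X$-norm and the $M$-quantity are sub-additive and stay below $\epsilon(M)$ and $M$ (each packet is designed to consume at most half of each budget, after fixing the implicit constants $\|w_0\|,\|v_0\|,\|\chi_0\|,\|\sigma_0\|$). Finiteness of all norms gives $\phi_0=Tf_0\in X\cap H^1(\real^2)$. The only genuinely delicate step is the parameter tuning of the previous paragraph: verifying that the widths $\mu,\delta_L,\delta_D$, the speed $c_0$ and the amplitudes can be chosen so that, for every $K$ and the fixed $\epsilon(M),M$, the large lower bounds and the small upper bounds hold simultaneously---this reduces to tracking the scaling exponent of each elementary norm in $\mu$, $c_0$, $\delta$ and the amplitude. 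Once $\phi_0$ is in hand, Theorem \ref{estabilidadecontinuo} yields the conclusion.
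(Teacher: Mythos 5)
Your proposal is correct in substance but follows a genuinely different construction from the paper's. Both arguments hinge on the same key lemma, Corollary \ref{integrabilidadeL2}, i.e.\ the weight $1/|\xi|$ in $\|Tf\|_{L^2}^2=\int|\xi|^{-1}|\mathcal{F}_zf|^2$, and on $\partial_y Tf=-T(cf_z)$ so that the large gradient is bought with the $c$-weight at large speeds while $\partial_x\phi_0$ stays small. Where you diverge is in how the $1/|\xi|$ singularity is exploited: you rescale a fixed zero-mean bump to be very wide, using $\|w_0(\mu\cdot)\|_{\dot H^{-1/2}}\sim\mu^{-1/2}\|w_0\|_{\dot H^{-1/2}}$, and you split $f_0$ into two packets with disjoint $c$-supports (one near $c=0$ for the mass, one at $c_0\gg1$ for the gradient). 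The paper instead keeps a \emph{single} tensor product $f_0=g_\epsilon\otimes f$, where $g_\epsilon$ is a high-pass truncation of a fixed $g$ with $\mathcal{F}g(0)\neq0$, so that $\|g_\epsilon\|_{\dot H^{-1/2}}^2\sim|\mathcal{F}g(0)|^2\log(1/\epsilon)\to\infty$ while every budget norm of $g_\epsilon$ is dominated by that of $g$; the gradient then comes from making $\|cf\|_{L^2}$ large while $\|cf\|_{L^1}$ and $\|f\|_{L^1\cap L^\infty}$ stay small. Two remarks. First, your motivating claim that ``a single separable packet cannot meet both requirements'' is true only for single-scale bumps; the paper's $g_\epsilon$ is separable but multi-scale in frequency, which is exactly how it evades your obstruction — so the obstruction is an artifact of the ansatz, not of separability. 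Second, the parameter tuning you defer does close: with $\delta_L\sim(M/K)^2$ the two competing quantities $\|Tf^{(L)}\|_{L^2}\sim\epsilon\mu^{-1/2}\delta_L^{1/2}$ and $\|f^{(L)}\|_{L^1_c(L^1_z)}\sim\epsilon\mu^{-1/2}\delta_L$ differ by the factor $\delta_L^{-1/2}\sim K/M$, so one can pin $\epsilon\mu^{-1/2}\delta_L^{1/2}\sim K$ and still keep the $L^1_cL^1_z$ term below $M$ (and likewise $\epsilon c_0\delta_D^{1/2}\sim K$ versus $\epsilon c_0\delta_D\lesssim M$ for the gradient packet); all remaining budget norms decrease as $\mu\to0$ or are controlled by $\delta_L,\delta_D\lesssim1$. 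The paper's route is shorter because a single limit $\epsilon\to0$ produces the divergence of $\|\phi_\epsilon\|_{L^2}$ with all budgets frozen; yours is more explicit about which physical feature (width in $z$, speed, window in $c$) produces which largeness, at the cost of the bookkeeping you flagged.
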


\begin{nota}
The global $H^1$ solutions given by Theorem \ref{grandesdados} have small $L^{\sigma+2}$ norm, since this norm is controlled by $\|v_0\|_{H^1} + \|\phi_0\|_X$. Therefore the energy of these solutions will always be positive and so there is no possible contradiction with the usual Virial blowup result by Glassey \cite{glassey}.
\end{nota}

Finally, we present a global well-posedness result for $\sigma=1$.

\begin{teo}\label{gwp}
Fix $\sigma=1$. Consider $u_0=v_0+\phi_0$, $v_0\in H^1(\real^2)$, $\phi_0=Tf_0\in X$. Assume that
$$
(1+c^3)(1+|z|)f_0\in L^1_c(H^2_z)\cap L^\infty_c(H^2_z).
$$
Then there exists a unique global solution $u\in C([0,\infty), E)$ of (NLS) with initial data $u_0$.
\end{teo}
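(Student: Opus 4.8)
The plan is to use the decomposition furnished by Theorem \ref{existenciacontinuo}, $u=v+\phi$, where $\phi$ solves the \emph{linear} Schrödinger equation $i\phi_t+\phi_{xx}+\phi_{yy}=0$ with $\phi(0)=\phi_0$, and $v$ solves $iv_t+\Delta v+\lambda|v+\phi|^\sigma(v+\phi)=0$ with $v(0)=v_0$. Writing $\phi=Tf$ (cf. \eqref{transformadaT}), the identity $\phi_{xx}+\phi_{yy}=\int(1+c^2)f_{zz}\,dc$ shows that the profile solves, for each $c$, the one-dimensional linear equation $if_t+(1+c^2)f_{zz}=0$. Since this flow conserves both $\|f(t,\cdot,c)\|_{L^2_z}$ and $\|\partial_z f(t,\cdot,c)\|_{L^2_z}$ for each $c$, one gets $\|\phi(t)\|_X=\|\phi_0\|_X$ for all $t$; in particular $\phi$ is global. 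Consequently $\|u(t)\|_E$ is comparable to $\|v(t)\|_{H^1}+\|\phi_0\|_X$, and by the blow-up alternative of Theorem \ref{existenciacontinuo} it suffices to prove that $\|v(t)\|_{H^1}$ remains bounded on every finite interval $[0,T]$.

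Before the energy estimates, I would collect the required bounds on $\phi(t)$. The point of the weighted hypothesis $(1+c^3)(1+|z|)f_0\in L^1_c(H^2_z)\cap L^\infty_c(H^2_z)$ is twofold: the factor $(1+c^3)$ makes the $c$-integrals defining $\phi$ and its derivatives convergent (recall $\phi_{yy}=\int c^2 f_{zz}\,dc$, so two spatial derivatives cost two powers of $c$), while the factor $(1+|z|)$ supplies the spatial decay needed for the integrability results. Applying Proposition \ref{integrabilidadeLp} and Corollary \ref{integrabilidadeL2} to $f(t)$, $\partial_z f(t)$ and $\partial_{zz}f(t)$ — whose weighted $H^2_z$ norms are propagated by the linear flow with at most polynomial growth in $t$ (via the operator $z+2i(1+c^2)t\,\partial_z$, which acts as an isometry on $L^2_z$) — I would obtain, for each $T<\infty$, a constant $C(T)$ with $\|\phi(t)\|_{L^p}$, $\|\nabla\phi(t)\|_{L^p}$ and $\|\Delta\phi(t)\|_{L^2}\le C(T)$ on $[0,T]$ for the finitely many exponents $p\in\{2,3,4,\infty\}$ used below.

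Next I would run two a priori estimates on $v$. For the mass, $\frac{d}{dt}\|v\|_{L^2}^2=2\lambda\,\mathrm{Im}\int \bar\phi\,|u|^\sigma u$ (the diagonal term $\int|u|^{\sigma+2}$ being real), and with $\sigma=1$ one bounds the right-hand side by $C\big(\|\phi\|_{L^\infty}\|v\|_{L^2}^2+\|\phi\|_{L^3}^3\big)$, so Gronwall gives $\|v(t)\|_{L^2}\le C(T)$. For the gradient I would use the renormalised energy $H(t)=\tfrac12\|\nabla v\|_{L^2}^2-\tfrac{\lambda}{\sigma+2}\|v+\phi\|_{L^{\sigma+2}}^{\sigma+2}$, which is finite because $\phi(t)\in L^{\sigma+2}$. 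A direct computation, using crucially that $\phi$ solves a \emph{linear} equation so that $\bar\phi_t=-i\Delta\bar\phi$, yields $\frac{d}{dt}H(t)=-\lambda\,\mathrm{Im}\int |u|^\sigma u\,\Delta\bar\phi$. Estimating this by $C(T)\big(1+\|v\|_{L^2}\|\nabla v\|_{L^2}\big)$ (Hölder together with the two-dimensional Gagliardo–Nirenberg inequality $\|v\|_{L^4}^2\le C\|v\|_{L^2}\|\nabla v\|_{L^2}$) and controlling $\|v+\phi\|_{L^{\sigma+2}}^{\sigma+2}$ from below by the same interpolation (the exponent $\sigma=1<2$ being $L^2$-subcritical, so the power of $\|\nabla v\|_{L^2}$ is $<2$ and absorbable), I would close the estimate: $H(t)\le C(T)$, hence $\|\nabla v(t)\|_{L^2}\le C(T)$ on $[0,T]$.

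The main obstacle is the second step: transferring the weighted-profile hypothesis into the uniform-on-$[0,T]$ bounds on $\phi(t)$, $\nabla\phi(t)$ and $\Delta\phi(t)$ in the various $L^p$ spaces. This is where the precise weights $(1+c^3)$ and $(1+|z|)$ enter, and where one must verify that the linear flow degrades these weighted $H^2$ norms only polynomially in $t$, so that the plane-wave-transform integrability results remain applicable at every time. Once these bounds are in hand, the energy argument is a routine Gronwall scheme, and combining the two a priori bounds gives $\|v(t)\|_{H^1}\le C(T)$ for every finite $T$; the blow-up alternative then forces $T=\infty$, while uniqueness and continuity in $E$ are inherited from Theorem \ref{existenciacontinuo}.
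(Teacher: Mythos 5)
Your proposal is correct and follows essentially the same route as the paper: decompose $u=v+\phi$ with $\phi=S_2(t)\phi_0$, derive uniform-in-time $L^p$ and $W^{1,p}$ bounds on $\phi$ from the plane wave transform theory (the paper isolates exactly this as the lemma preceding the proof, showing $\|S_2(t)Tf\|_{L^p}\lesssim C$ for $p\ge 2$ and $\lesssim C(1+t^2)$ for $1<p<2$ under the weighted hypotheses), then run the mass and renormalised-energy Gronwall estimates on $v$ and invoke the blow-up alternative. The only cosmetic differences are that the paper keeps the energy flux in the form $\parteim\lambda\int\nabla\left(|u|u\right)\cdot\nabla\bar{\phi}$ rather than moving the Laplacian onto $\phi$, and closes the gradient bound by a contradiction argument rather than a direct Gronwall absorption.
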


We now outline the structure of the paper. Section 2 will be devoted to the study of the numerable superposition of plane waves and Theorems \ref{existnumeravel} and \ref{estabilidadenumeravel} are proved therein. In section 3, we recall the plane wave transform \eqref{transformadaT}, construct a suitable functional framework in which one may consider such a transform, and prove several properties. In section 4, we apply the transform to solve some classic linear PDE's. In section 5, we apply the theory for the transform to obtain Theorems \ref{existenciacontinuo}, \ref{estabilidadecontinuo}, \ref{grandesdados} and \ref{gwp}. We conclude with some remarks and, in the appendix, we give an alternate proof of the injectivity of the plane wave transform.

\begin{notacao}
We fix some notations. We define the space of Schwarz funtions, $\mathcal{S}(\real^2)$, and the space of distributions, $\mathcal{D}'(\real^2)$. We denote by $\mathcal{F}_{*}$ the Fourier transform with respect to the $\ast$ variables. Integration spaces will often be indexed with the variable that is being integrated: $L^1_c(\real, L^2_z(\real))$ will denote the space of integrable functions in $c\in\real$ with values in $L^2(\real)$, where the variable is $z$. Furthermore, if no confusion can arise, we omit the domain in the notation of the integration space.
\end{notacao}

%
%

\section{Plane waves: the numerable case}

Let us recall the numerable construction. Fix a sequence of wave speeds $\ubar{c}=\{c_n\}_{n\in \nat}$, with $c_i\neq c_j$, $i\neq j$. Define the spaces
\begin{align}
X_{\ubar{c}}=\Big\{\phi\in L^1_{loc}(\real^2): \phi(x,y)=\sum_{n\ge 1} f_n(x-c_ny),\nonumber ((1+c_n^2)f_n)_{n\in\nat}\in l^1(H^2(\real)),  \Big\}\label{defiXc}
\end{align}
and
\begin{equation}
X'_{\ubar{c}}=\left\{\phi\in L^1_{loc}(\real^2): \phi(x,y)=\sum_{n\ge 1} f_n(x-c_ny), (f_n)_{n\in\nat}\in l^1(L^2(\real)) \right\}\label{defiXclinha}.
\end{equation}
\begin{lema}\label{injectiva}
If $\ubar{f}\in l^1(L^2(\real))$ is such that
$$
\phi(x,y)=\sum_{n\ge 1} f_n(x-c_ny)=0,\quad x,y\in\real^2,
$$
then $\ubar{f}\equiv 0$.
\end{lema}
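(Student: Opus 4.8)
The plan is to pass to the Fourier transform, where each plane-wave profile becomes a distribution concentrated on a line through the origin, and then to exploit that distinct speeds give distinct lines. First I would check that $\phi$ is a well-defined tempered distribution: for $\psi\in\mathcal{S}(\real^2)$ the shear $w=x-c_ny$ is measure preserving, so that $|\langle f_n(\cdot-c_n\cdot),\psi\rangle|\le \|f_n\|_{L^2}\|\psi\|_{L^2}$, and since $\ubar{f}\in l^1(L^2(\real))$ the partial sums converge in $\mathcal{S}'(\real^2)$. Thus the hypothesis $\phi\equiv 0$ can be tested against any Schwartz function and the sum may be interchanged with the integral.

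The central computation is the identity obtained from a single change of variables. For $\psi\in\mathcal{S}(\real^2)$, setting $(R_c\psi)(w):=\int_\real \psi(w+cy,y)\,dy$, the substitution $w=x-c_ny$ gives $\langle\phi,\psi\rangle=\sum_n\int_\real f_n(w)\,(R_{c_n}\psi)(w)\,dw=0$. A direct computation yields $\widehat{R_c\psi}(\xi)=\hat\psi(\xi,-c\xi)$, i.e. the Fourier transform of this slice transform is the restriction of $\hat\psi$ to the line $\eta=-c\xi$. By Parseval, the hypothesis becomes
\[
\sum_n\int_\real \hat f_n(\xi)\,\hat\psi(-\xi,c_n\xi)\,d\xi=0
\]
for every $\psi\in\mathcal{S}(\real^2)$. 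Heuristically this expresses that $\hat\phi=2\pi\sum_n \hat f_n(\xi)\,\delta(\eta+c_n\xi)$ is supported on the family of distinct lines $\{\eta=-c_n\xi\}$, so each term should vanish separately.

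To make the separation rigorous — and this is the step I expect to be the main obstacle, since the speeds $c_n$ may accumulate and the lines are not uniformly separated near the origin — I would use a concentration argument rather than disjoint supports. Fix $N$ and choose $\hat\psi(\mu,\nu)=a(\mu)\,\rho_\epsilon(\nu+c_N\mu)$, where $a\in C_c^\infty(\real\setminus\{0\})$ and $\rho_\epsilon(s)=\rho_0(s/\epsilon)$ with $\rho_0\in C_c^\infty(\real)$, $\rho_0(0)=1$. Then the $N$-th term equals $\int \hat f_N(\xi)\,a(-\xi)\,d\xi$, independent of $\epsilon$, while each term with $n\neq N$ carries the factor $\rho_\epsilon((c_n-c_N)\xi)$, which tends to $0$ pointwise on $\supp a$ as $\epsilon\to 0$ because $c_n\neq c_N$ and $a$ is supported away from $\xi=0$. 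The $l^1(L^2)$ hypothesis supplies the domination: each term is bounded in modulus by $\int|\hat f_n(\xi)a(-\xi)|\,d\xi\le C\|f_n\|_{L^2}\|a\|_{L^2}$, and $\sum_n\|f_n\|_{L^2}<\infty$, so dominated convergence (with the counting measure in $n$ and Lebesgue measure in $\xi$) allows passage to the limit. This forces $\int \hat f_N(\xi)\,a(-\xi)\,d\xi=0$ for all admissible $a$, hence $\hat f_N=0$ a.e. off the origin and therefore everywhere, so $f_N\equiv 0$. Since $N$ is arbitrary, $\ubar f\equiv 0$.
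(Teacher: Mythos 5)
Your proof is correct, but it takes a genuinely different route from the paper's. The paper argues entirely in physical space: fixing $k$ and restricting $\phi$ to the shifted horizontal line $y=h$, it writes $0=f_k(x)+\sum_{n\neq k}f_n(x+(c_k-c_n)h)$ and lets $h\to\infty$; the finitely many profiles with index below a threshold $n_0$ translate off any fixed interval $]a,b[$, while the tail beyond $n_0$ is uniformly small by the $l^1(L^2)$ hypothesis, contradicting $\|f_k\|_{L^2(]a,b[)}>0$. You instead work on the Fourier side, where each profile is carried by the line $\eta=-c_n\xi$, and you disentangle the $N$-th contribution by concentrating the test function on that line away from the origin; the $l^1(L^2)$ hypothesis enters as the dominating function for the limit $\epsilon\to0$. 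Your version is more structural and connects directly to the machinery the paper develops later for the continuous transform (Proposition \ref{propformula} and Corollary \ref{unicarepresentacao} are exactly of this flavour), and it would adapt to a continuum of speeds; the paper's version is more elementary and exploits the discreteness of the family in the most economical way. One small inaccuracy in your setup: the bound $|\langle f_n(\cdot-c_n\cdot),\psi\rangle|\le\|f_n\|_{L^2}\|\psi\|_{L^2}$ is not what Cauchy--Schwarz gives, since $(x,y)\mapsto f_n(x-c_ny)$ does not belong to $L^2(\real^2)$; the correct estimate is $\|f_n\|_{L^2}\|R_{c_n}\psi\|_{L^2_w}\le\|f_n\|_{L^2}\|\psi\|_{L^1_y(L^2_x)}$, which is still controlled by a fixed Schwartz seminorm of $\psi$, so the convergence of the partial sums in $\mathcal{S}'(\real^2)$ and the rest of your argument are unaffected.
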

\begin{proof}
Fix $k\in\nat$, $h\in\real$ and define
$$
\phi_h(x)=\phi(x+c_kh,h) = f_k(x) + \sum_{n\ge 1,n\neq k} f_n(x + (c_k-c_n)h).
$$
By absurd, suppose that there exists an open interval $]a,b[$ such that
$$
\|f_k\|_{L^2(]a,b[)} =\delta >0.
$$
Since $\ubar{f}\in l^1(L^2(\real))$, there exists $n_0\in\nat$ such that
$$
\sum_{n\ge n_0} \|f_n\|_{L^2} < \delta/2.
$$
On the other hand, for $h$ large enough, one has
$$
\sum_{n= 1,n\neq k}^{n_0} \|f_n(\cdot + (c_k-c_n)h)\|_{L^2(]a,b[)} < \delta/2.
$$
Hence
$$
\delta=\|f_k\|_{L^2(]a,b[)}  \le \sum_{n\ge 1,n\neq k} \|f_n(\cdot + (c_k-c_n)h)\|_{L^2(]a,b[)} <\delta,
$$
which is impossible.
\end{proof}

Since each element of $X_{\ubar{c}}$ and $X_{\ubar{c}}'$ may be represented in a unique way, we define the norms of these spaces as the norm induced by the profile space:
$$
\|\phi\|_{X_{\ubar{c}}}:=\sum_{n\ge 1} \|(1+c_n^2)f_n\|_{H^2},\quad \|\phi\|_{X_{\ubar{c}}'}:=\sum_{n\ge 1} \|f_n\|_{L^2}.
$$

\begin{lema}\label{somadirecta}
Consider $H^{-1}(\real^2)$ and $X'_{\ubar{c}}$ as subspaces of $\mathcal{D}'(\real^2)$. Then $H^{-1}(\real^2)\cap X'_{\ubar{c}}=\{0\}$.
\end{lema}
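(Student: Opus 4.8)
The plan is to exploit the tension between two opposite features: elements of $X'_{\ubar c}$ are rigid, being constant along the lines $x-c_ny=\mathrm{const}$ and hence not decaying, whereas membership in $H^{-1}(\real^2)$ forces a form of decay. I would isolate each profile $f_k$ by translating $\phi$ along the direction $(c_k,1)$ of the $k$-th plane wave and letting the translation parameter tend to infinity, exactly as in the proof of Lemma \ref{injectiva}; the new ingredient is that, tested against such a translation, an $H^{-1}$ distribution must vanish in the limit. So let $\phi\in H^{-1}(\real^2)\cap X'_{\ubar c}$ have profiles $\ubar f=(f_n)\in l^1(L^2(\real))$, write $\phi_k(x,y)=f_k(x-c_ky)$, and denote by $\tau_v g=g(\cdot-v)$ the translation operator.

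The first routine step is to check that the series defining $\phi$ converges in $\mathcal{S}'(\real^2)$ and that translation acts termwise. For $\psi\in\mathcal{S}(\real^2)$ one has $\langle f_n(\cdot-c_n\cdot),\psi\rangle=\langle f_n,g_n\rangle_{L^2}$, where $g_n(w)=\int_{\real}\psi(w+c_ny,y)\,dy$. A Cauchy–Schwarz estimate in $y$ against the weight $(1+y^2)^{1/2}$, followed by the change of variables $x=w+c_ny$, yields $\|g_n\|_{L^2}\le C\,\|(1+y^2)^{1/2}\psi\|_{L^2(\real^2)}$, a bound \emph{independent of $c_n$}. Since $\sum_n\|f_n\|_{L^2}<\infty$, the pairing $\langle\phi,\psi\rangle=\sum_n\langle f_n,g_n\rangle$ converges absolutely, so $\phi$ is tempered and $\tau_v\phi=\sum_n\tau_v\phi_n$ in $\mathcal{S}'$.

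Next come the two key facts. Using $\phi\in H^{-1}=(H^1)^\ast$, represent it via Riesz as $\langle J\phi,\cdot\rangle_{H^1}$ with $J\phi\in H^1$; since translates of a fixed $L^2$ function converge weakly to $0$, both $\tau_{-v}\psi$ and $\nabla\tau_{-v}\psi$ converge weakly to $0$ in $L^2$ as $|v|\to\infty$, whence $\langle\tau_v\phi,\psi\rangle=\langle J\phi,\tau_{-v}\psi\rangle_{H^1}\to 0$. Using instead $\phi\in X'_{\ubar c}$, take $v=h(c_k,1)$: termwise translation gives $\tau_v\phi=\phi_k+\sum_{n\neq k}f_n(\cdot-c_n\cdot+h(c_n-c_k))$, so that $\langle\tau_v\phi,\psi\rangle=\langle\phi_k,\psi\rangle+\sum_{n\neq k}\langle f_n,\tau_{h(c_n-c_k)}g_n\rangle$.

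The heart of the argument, and the step I expect to be the main obstacle, is to show the tail sum tends to $0$ as $h\to\infty$ uniformly in the infinite index set. For each fixed $n\neq k$ one has $h(c_n-c_k)\to\pm\infty$ (here $c_n\neq c_k$ is used), so $\tau_{h(c_n-c_k)}g_n\rightharpoonup 0$ in $L^2$ and $\langle f_n,\tau_{h(c_n-c_k)}g_n\rangle\to 0$. To interchange this limit with the sum, I would dominate each term by $\|f_n\|_{L^2}\|g_n\|_{L^2}\le C\|f_n\|_{L^2}$, which is summable precisely thanks to the $c_n$-independent bound on $\|g_n\|_{L^2}$; dominated convergence for series then forces the tail to vanish. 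Consequently $\langle\tau_v\phi,\psi\rangle\to\langle\phi_k,\psi\rangle$, while the $H^{-1}$ fact gives the limit $0$, so $\langle\phi_k,\psi\rangle=0$ for every $\psi\in\mathcal{S}(\real^2)$. Hence $f_k(x-c_ky)=0$ as a distribution; evaluating at $y=0$ gives $f_k=0$. As $k$ was arbitrary, every profile vanishes and $\phi=0$, which proves $H^{-1}(\real^2)\cap X'_{\ubar c}=\{0\}$.
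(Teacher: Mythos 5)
Your proof is correct and follows essentially the same route as the paper's: translating $\phi$ along the direction $(c_k,1)$, using that translates of an $H^{-1}$ distribution tested against a fixed $H^1$ function tend to zero, and showing the terms with $n\neq k$ drift away so that only $\langle\phi_k,\psi\rangle$ survives in the limit. The only difference is that you spell out the $c_n$-independent bound on $\|g_n\|_{L^2}$ and the dominated-convergence step that the paper leaves implicit by referring back to the proof of Lemma \ref{injectiva}.
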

\begin{proof}
Suppose that $\ubar{f}\in l^1(L^2(\real))$ is such that the function $\phi$ defined by
$$
\phi(x,y)=\sum_{n\ge 1} f_n(x-c_ny)
$$
is in $H^{-1}(\real^2)$. Fix $k\in\nat$ and define
$$
\phi_h(x,y)=\phi(x+c_kh,y+h) = f_k(x-c_ky) + \sum_{n\ge 1,n\neq k} f_n(x -c_ny+ (c_k-c_n)h).
$$
Given $\psi\in C^\infty_0(\real^2)$, it is easy to check that, since $\phi\in H^{-1}(\real^2)$,
$$
\langle \phi_h, \psi\rangle_{H^{-1}\times H^1} \to 0,\quad h\to\infty.
$$
Through a similar argument to that of the previous proof, one may check that, when $h\to \infty$,
$$
\int \left(f_k(x-c_ky) + \sum_{n\ge 1,n\neq k} f_n(x -c_ny+ (c_k-c_n)h)\right)\psi(x,y)dxdy \to \int f_k(x-c_ky)\psi(x,y)dxdy.
$$
Hence one has necessarily
$$
\int f_k(x-c_ky)\psi(x,y)dxdy=0,\forall \psi\in C_0^\infty(\real^2)
$$
and so $f_k\equiv 0$. Since $k\in\nat$ is arbitrary, one concludes $\phi\equiv 0$.
\end{proof}
Define the following subspaces of $\mathcal{D}'(\real^2)$:
\begin{equation}\label{defiEc}
E_{\ubar{c}}=H^1(\real^2) \oplus X_{\ubar{c}},\quad E_{\ubar{c}}'=H^{-1}(\real^2)\oplus X_{\ubar{c}}'.
\end{equation}
Notice that it follows from the previous lemma that these sums are indeed direct sums. Moreover, consider
\begin{equation}
Y_{\ubar{c}}=\left\{\phi\in X_{\ubar{c}}: \sum_{i\neq j}\frac{\|f_j\|_{L^2}\|f_k\|_{L^2}}{|c_j-c_k|^{1/2}}<\infty \right\},\quad A_{\ubar{c}}=H^1(\real^2)\oplus Y_{\ubar{c}}.
\end{equation}

\begin{lema}\label{decomposicao}
Fix $\sigma\ge 1$. Take $v\in H^1(\real^2)$ and $\phi\in Y_{\ubar{c}}$. Writing
$$
\phi(x,y)=\sum_{n\ge 1} f_n(x-c_ny),
$$
define $\phi_n(x,y)=f_n(x-c_ny)$ and
$$
g=\sum_{n\ge 1}|\phi_n|^\sigma \phi_n
$$
Then $g\in X_{\ubar{c}}'$ and $|v+\phi|^{\sigma}(v+\phi)-g\in H^{-1}(\real^2)$.
\end{lema}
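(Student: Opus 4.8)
The plan is to prove the two assertions separately and, for the second, to establish the stronger fact that $|v+\phi|^\sigma(v+\phi)-g\in L^2(\real^2)$, which embeds into $H^{-1}(\real^2)$. Writing $N(w)=|w|^\sigma w$, I split
$$
N(v+\phi)-g=\big(N(v+\phi)-N(\phi)\big)+\Big(N(\phi)-\sum_{n\ge1}N(\phi_n)\Big),
$$
so as to isolate the perturbation by $v$ from the purely plane-wave interaction. A fact used repeatedly is that $\phi\in L^\infty(\real^2)$: since $H^1(\real)\hookrightarrow L^\infty(\real)$ and $(f_n)\in l^1(H^2(\real))$, one has $\|\phi\|_{L^\infty}\le\sum_{n\ge1}\|f_n\|_{L^\infty}\le C\sum_{n\ge1}\|f_n\|_{H^1}<\infty$.

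To see that $g\in X_{\ubar{c}}'$, note that $|\phi_n|^\sigma\phi_n=(|f_n|^\sigma f_n)(x-c_ny)$, so $g$ is the superposition of plane waves with speeds $c_n$ and profiles $h_n:=|f_n|^\sigma f_n$; by definition of $X_{\ubar{c}}'$ it suffices to check $(h_n)\in l^1(L^2(\real))$. The one-dimensional Sobolev embedding gives $\|h_n\|_{L^2}\le\|f_n\|_{L^\infty}^\sigma\|f_n\|_{L^2}\le C\|f_n\|_{H^1}^{\sigma+1}$, and since $(f_n)\in l^1(H^1(\real))$ is bounded, $\sum_n\|h_n\|_{L^2}\le C(\sup_m\|f_m\|_{H^1})^\sigma\sum_n\|f_n\|_{H^1}<\infty$. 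The representation is unique by Lemma \ref{injectiva}.

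For the perturbative term I use the pointwise inequality $|N(a+b)-N(a)|\le C(|a|^\sigma+|b|^\sigma)|b|$ with $a=\phi$, $b=v$, giving $|N(v+\phi)-N(\phi)|\le C|v|^{\sigma+1}+C|\phi|^\sigma|v|$. The first term is in $L^2$ because $v\in H^1(\real^2)\hookrightarrow L^{2(\sigma+1)}(\real^2)$, and the second because $\||\phi|^\sigma v\|_{L^2}\le\|\phi\|_{L^\infty}^\sigma\|v\|_{L^2}<\infty$; hence $N(v+\phi)-N(\phi)\in L^2(\real^2)$.

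The main obstacle is the interaction term $N(\phi)-\sum_n N(\phi_n)$, for which the crucial ingredient is the pointwise estimate
$$
|N(a+b)-N(a)-N(b)|\le C\,|a|\,|b|\,(|a|+|b|)^{\sigma-1},\qquad \sigma\ge1.
$$
This follows by writing $N(a+b)-N(a)-N(b)=\int_0^1\big(N'(a+tb)-N'(tb)\big)\cdot b\,dt$ and using that $N\in C^1(\complex;\complex)$ with $|N'(w)|\le C|w|^\sigma$ and $|N'(w_1)-N'(w_2)|\le C(|w_1|+|w_2|)^{\sigma-1}|w_1-w_2|$ — the latter being exactly where $\sigma\ge1$ is needed, to keep the second derivative ($\le C|w|^{\sigma-1}$) locally bounded. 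Telescoping over the partial sums $S_k=\sum_{n\le k}\phi_n$ and setting $\Phi:=\sum_n|\phi_n|\le\|\phi\|_{L^\infty}$, one obtains the pointwise bound
$$
\Big|N(\phi)-\sum_{n\ge1}N(\phi_n)\Big|\le C\,\Phi^{\sigma-1}\sum_{j\ne k}|\phi_j|\,|\phi_k|.
$$
Since $\Phi^{\sigma-1}\in L^\infty(\real^2)$ and, by the change of variables recorded in the Introduction, $\||\phi_j|\,|\phi_k|\|_{L^2}=\|f_j\|_{L^2}\|f_k\|_{L^2}/|c_j-c_k|^{1/2}$, it follows that
$$
\Big\|N(\phi)-\sum_{n\ge1}N(\phi_n)\Big\|_{L^2}\le C\,\|\phi\|_{L^\infty}^{\sigma-1}\sum_{j\ne k}\frac{\|f_j\|_{L^2}\|f_k\|_{L^2}}{|c_j-c_k|^{1/2}}<\infty,
$$
the final sum being finite precisely because $\phi\in Y_{\ubar{c}}$. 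Adding the two contributions yields $N(v+\phi)-g\in L^2(\real^2)\hookrightarrow H^{-1}(\real^2)$. The points requiring care are the a.e.\ convergence of the telescoping series (guaranteed by $\Phi\in L^\infty$ and continuity of $N$) and the pointwise nonlinear estimate, where $\sigma\ge1$ enters essentially.
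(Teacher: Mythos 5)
Your proposal is correct and follows essentially the same route as the paper: the same splitting into a $v$-perturbation plus a telescoped plane-wave interaction term, the same two pointwise nonlinear estimates, and the same use of the $Y_{\ubar{c}}$ condition via the change-of-variables identity $\|\phi_j\phi_k\|_{L^2}=\|f_j\|_{L^2}\|f_k\|_{L^2}/|c_j-c_k|^{1/2}$. The only (harmless) deviation is that you place $|v|^{\sigma+1}$ in $L^2$ via $H^1(\real^2)\hookrightarrow L^{2(\sigma+1)}(\real^2)$, whereas the paper uses $L^{\frac{\sigma+2}{\sigma+1}}\hookrightarrow H^{-1}$; both suffice.
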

\begin{proof}
Since, for any $k\in\nat$,
$$
\|\phi_k\|_{L^\infty} \le \sum_{n\ge 1}\|\phi_n\|_{L^\infty} \le \sum_{n\ge 1}\|f_n\|_{L^\infty} \le \|\phi\|_{X_{\ubar{c}}},
$$
one has
$$
\|g\|_{X_{\ubar{c}}'}\le \left(\sup_k \|\phi_k\|_\infty^\sigma\right)\left\|\sum_{n\ge 1} |\phi_n|\right\|_{X_{\ubar{c}}'} \le \|\phi\|_{X_{\ubar{c}}}^\sigma \left\|\sum_{n\ge 1} |\phi_n|\right\|_{X_{\ubar{c}}'} \lesssim \|\phi\|_{X_{\ubar{c}}}^{\sigma+1}
$$
and so $g\in X_{\ubar{c}}'$. For the second part of the result, recall the classical estimates
$$
||a|^\sigma a - |b|^\sigma b|\le (\sigma +1)(|a|^\sigma + |b|^\sigma)|a-b|,\ a,b\in\complex
$$
and
$$
||a+b|^{\sigma}(a+b) - |a|^\sigma - |b|^\sigma|\lesssim |a|^\sigma |b| + |b|^\sigma |a|, \ a,b\in\complex.
$$
This implies that
\begin{align*}
&\left|\left|v+\sum_{k\ge 1} \phi_k\right|^{\sigma}\left(v+\sum_{k\ge 1} \phi_k\right)-g\right|\le \left|\left|v+\sum_{k\ge 1} \phi_k\right|^\sigma \left(v+\sum_{k\ge 1} \phi_k\right) - \left|\sum_{k\ge 1} \phi_k\right|^\sigma \left(\sum_{k\ge 1} \phi_k\right)\right| \\ +& \sum_{j\ge 1}\left|\left|\sum_{k\ge j}\phi_k\right|^\sigma\left(\sum_{k\ge j}\phi_k\right) - \left|\sum_{k\ge j+1} \phi_k\right|^\sigma\left(\sum_{k\ge j+1} \phi_k\right) - |\phi_j|^\sigma \phi_j\right|\\\lesssim &\left(|v|^\sigma + \left|\sum_{k\ge 1} \phi_k\right|^\sigma\right)|v| + \sum_{j\ge 1} \left(\left|\sum_{k\ge j+1}\phi_k\right|^\sigma|\phi_j| + \left|\sum_{k\ge j+1}\phi_k\right||\phi_j|^\sigma \right)\\
\lesssim & |v|^{\sigma+1} +\sum_{k\ge 1} \|\phi_k\|_{L^\infty}^\sigma|v| + \sum_{j\ge 1} \sum_{k\ge j+1}\left(\|\phi_k\|_{L^\infty}^{\sigma-1} + \|\phi_j\|_{L^\infty}^{\sigma-1}\right)|\phi_j\phi_k|
\end{align*}
The first term, $|v|^{\sigma+1}$, is in $L^{\frac{\sigma+2}{\sigma+1}}$, by Sobolev's injection. The second term is clearly in $L^2$. Finally, we prove that the third term is also in $L^2$:
\begin{align*}
\left\|\sum_{j\ge 1} \sum_{k\ge j+1}\left(\|\phi_k\|_{L^\infty}^{\sigma-1} + \|\phi_j\|_{L^\infty}^{\sigma-1}\right)|\phi_j\phi_k|\right\|_{L^2} &\lesssim \|\phi\|_{X_{\ubar{c}}}^{\sigma-1} \sum_{j\ge 1} \sum_{k\ge j+1}\|\phi_j\phi_k\|_{L^2} \\&= \|\phi\|_{X_{\ubar{c}}}^{\sigma-1} \sum_{j\neq k}\frac{\|f_j\|_{L^2}\|f_k\|_{L^2}}{|c_j-c_k|^{1/2}}<\infty.
\end{align*}
Hence $|v+\phi|^\sigma(v+\phi)-g\in L^{\frac{\sigma+2}{\sigma+1}} + L^2 \hookrightarrow H^{-1}(\real^2)$.
\end{proof}

\begin{proof}[Proof of Theorem \ref{existnumeravel}]
\textit{Step 1.} Write $u_0=v_0+\phi_0$, $v_0\in H^1(\real^2)$, $\phi_0\in Y_{\ubar{c}}$,
$$
\phi_0(x,y)=\sum_{n\ge 1} (f_0)_n(x-c_ny).
$$
We claim that, over $A_{\ubar{c}}$, the initial value problem for (NLS) is equivalent to the initial value problem
\begin{align}
& iv_t + v_{xx} + v_{yy} + \lambda|v+\phi|^\sigma(v+\phi) - \sum_{n\ge 1}\lambda|\phi_n|^\sigma\phi_n=0,\quad \phi_n(t,x,y)=f_n(t,x-c_ny)\label{eqv}\\
& v(0)=v_0\nonumber\\
& i((f_n)_t + (1+c_n^2)(f_n)_{zz} + \lambda|f_n|^\sigma f_n=0, \ n\in\nat\label{sistemainfinito}\\
& f_n(0)=(f_0)_n\nonumber
\end{align}
Indeed, if $v$ and $(f_n)_{n\in\nat}$ are solutions of this problem, it is trivial to check that
$$
u(t,x,y)=v(t,x,y)+\sum_{n\ge 1} f_n(t,x-c_ny)
$$
is a solution of (NLS). On the other hand, suppose that $u$ is a solution of (NLS) with initial condition $u_0$. Decompose $u$ as $v+\phi$ and write
$$
\phi(t,x,y) = \sum_{n\ge 1} f_n(t,x-c_ny),\quad g(t,x,y)=\sum_{n\ge 1} \left(|f_n|^\sigma f_n\right)(t,x-c_ny)
$$
Then
$$
iv_t+v_{xx}+v_{yy} + \lambda |v+\phi|^\sigma(v+\phi) - \lambda g = -\left(i\phi_t + \phi_{xx} + \phi_{yy} + \lambda g\right)
$$
Since the left hand side is in $H^{-1}(\real^2)$ and the right hand side is in $X_{\ubar{c}}'$, by lemma \ref{somadirecta}, both sides must be equal to zero:
$$
\left\{\begin{array}{l}
iv_t + v_{xx} + v_{yy} + \lambda|v+\phi|^\sigma(v+\phi) - \lambda g=0\\
v(0)=v_0\\
i\phi_t + \phi_{xx} + \phi_{yy} + \lambda g=0\\
\phi(0)=\phi_0
\end{array}\right..
$$
Furthermore, by lemma \ref{injectiva}, the second equation is equivalent to the infinite system $\eqref{sistemainfinito}$, which proves the claim.

\textit{Step 2.} We solve the infinite system \eqref{sistemainfinito}. For each $n\in \nat$, define
$$
(h_0)_n(z)=(f_0)_n(\sqrt{1+c_n^2}z).
$$
Then
$$
\|h_n(0)\|_{H^2}\le (1+c_n^2)^{3/4}\|(f_0)_n\|_{H^2}.
$$
Since $((1+c_n^2)(f_0)_n)\in l^1(H^2)$,  $(1+c_n^2)^{3/4}\|(f_0)_n\|_{H^2}\to 0$ as $n\to\infty$.

Now consider the initial value problem
$$
i(h_n)_t + (h_n)_{zz} + \lambda|h_n|^\sigma h_n=0, h_n(0)=(h_n)_0.
$$
It follows from the $H^2$ local well-posedness results for (NLS) that there exists a time $T_n$ and a unique maximal solution $h_n\in C([0,T_n), H^2(\real))\cap C^1([0,T_n), L^2(\real))$ of the above problem. Moreover, since $\|h_n(0)\|_{H^2}\to 0$, for $n\ge n_0$ sufficiently large, $T_n\ge T_1$. Define the common time of existence,
$$
T_\infty =\min_{n\in\nat} T_n. 
$$
Then each $h_n$ is define on $[0,T_\infty)$. If $T_\infty<\infty$, then, as $t\to T_\infty$, 
$$
\|h_n(t)\|_{H^2}\to \infty, \mbox{ for some } n\in\nat.
$$
Setting 
$$f_n(t,\cdot)=h_n\left(t,\frac{\cdot}{\sqrt{1+c_n^2}}\right),\ 0\le t< T_{\infty},
$$
it is clear that the sequence $(f_n)_{n\in\nat}$ is the unique solution of \eqref{sistemainfinito} on $[0,T_\infty)$. Moreover, considering that
$$
\sup_n \|h_n(t)\|_{H^2}\le \sup_n (1+c_n^2)^{3/4}\|f_n(t)\|_{H^2} \le \left\|\left((1+c_n^2) f_n(t)\right)_{n\in\nat}\right\|_{l^1(H^2)},
$$
if $T_\infty<\infty$, one has
$$
\left\|\left((1+c_n^2) f_n(t)\right)_{n\in\nat}\right\|_{l^1(H^2)}\to \infty,\quad t\to T_{\infty}.
$$
Finally, it follows from the conservation of the $L^2$ norm that
$$
\left(\sum_{j\neq q} \frac{\|f_j(t)\|_{L^2}\|f_k(t)\|_{L^2}}{|c_j-c_k|^{1/2}}\right)^{1/2} = \left(\sum_{j\neq q} \frac{\|(f_0)_j\|_{L^2}\|(f_0)_k\|_{L^2}}{|c_j-c_k|^{1/2}}\right)^{1/2},\ 0\le t< T_\infty.
$$
Setting
$$
\phi(t,x,y)=\sum_{n\ge 1} f_n(t,x-c_ny),
$$
this implies that $\phi\in C([0,T_\infty),Y_{\ubar{c}})\cap C^1((0,T_\infty), X_{\ubar{c}}')$.

\textit{Step 3.} We now solve \eqref{eqv}. By Lemma 
\ref{decomposicao} and Kato's local well-posedness result in $H^1(\real^2)$, \eqref{eqv} has a unique maximal solution $v$, defined on $[0,T_v)$, $T_v\le T_\infty$, with initial condition $v_0$. Furthermore, if $T_v<T_\infty$,
$$
\|v(t)\|_{H^1}\to\infty ,\quad t\to T_v.
$$
\textit{Step 4.} Conclusion. It follows from the previous steps that $u=v+\phi$ is the unique solution of (NLS) on $E$, with initial data $u_0$. If $T_v<\infty$, then either $T_v<T_\infty$ or $T_\infty<\infty$. In any case,
$$
\|u(t)\|_E = \|v(t)\|_{H^1} + \|\phi(t)\|_{X_{\ubar{c}}}\to\infty ,\quad t\to T_{max},
$$
which implies that $u$ is not extendible over $E$. The continuous dependence on the initial data is a consequence of the continuous dependence given by the local well-posedness results for $v$ and $(f_n)_{n\in\nat}$.
\end{proof}

\begin{lema}\label{decaimento}
Set $\sigma=4$. Given $M>1$, there exists $\epsilon=\epsilon(M)>0$ such that, given $\phi_0\in Y_{\ubar{c}}$ satisfying
\begin{equation}
N(\phi_0)=\sum_{n\ge 1}\|z(f_0)_n\|_{L^2} + \|\partial_z (f_n)_0\|_{L^1}<M,\ \|\phi_0\|_{X_{\ubar{c}}}<\epsilon
\end{equation} 
the solution $\phi$ of (NLS) with initial data $\phi_0$ is global and satisfies
$$
\|\phi(t)\|_{L^\infty}\lesssim \min\left\{\epsilon, \frac{M}{t^{1/2}} \right\},\ \|\nabla \phi(t)\|_{L^\infty}\lesssim M^3,\ \|\phi(t)\|_{X_{\ubar{c}}}\le \frac{3}{2}\|\phi_0\|_{X_{\ubar{c}}}, \ t>0.
$$

\end{lema}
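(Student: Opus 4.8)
The plan is to reduce the statement to decay estimates for the individual profiles and then sum over $n$ using the $l^1$-structure built into $\|\cdot\|_{X_{\ubar c}}$ and into $N(\phi_0)$. Since $\phi_0\in Y_{\ubar c}$ has no $H^1$ component, the solution is simply $\phi(t,x,y)=\sum_{n\ge 1}f_n(t,x-c_ny)$, where each profile solves $i(f_n)_t+(1+c_n^2)(f_n)_{zz}+\lambda|f_n|^4f_n=0$ (equivalently, after the rescaling $h_n(t,z)=f_n(t,\sqrt{1+c_n^2}\,z)$ of Step 2 in the proof of Theorem \ref{existnumeravel}, the mass-critical one-dimensional equation). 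Thus it suffices to prove, uniformly in $n$, global existence together with quantitative control of $\|f_n(t)\|_{L^\infty}$, $\|\partial_z f_n(t)\|_{L^\infty}$ and $\|f_n(t)\|_{H^2}$, and then to check that the resulting estimates sum. I would work directly with $f_n$ and the speed-adapted operator $J_a=z+2iat\partial_z$, with $a=1+c_n^2$, carrying the weights $(1+c_n^2)$ explicitly so that they match $\|\cdot\|_{X_{\ubar c}}$.

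The first ingredient is the algebra of $J_a$. It commutes with $i\partial_t+a\partial_{zz}$ and factorizes as $J_a=2iat\,M_a\partial_z M_a^{-1}$ with $M_a=e^{iz^2/4at}$. From this factorization and the gauge invariance $|M_a^{-1}g|=|g|$ one gets the pointwise bound $|J_a(|f|^\sigma f)|\lesssim |f|^\sigma|J_af|$ (the phase $M_a$ absorbs the otherwise troublesome derivative-of-modulus terms) and the Gagliardo--Nirenberg decay estimate $\|f\|_{L^\infty}\lesssim (at)^{-1/2}\|f\|_{L^2}^{1/2}\|J_af\|_{L^2}^{1/2}$. Setting $w_n=J_af_n$, mass conservation gives $\|f_n(t)\|_{L^2}=\|(f_0)_n\|_{L^2}$, and an $L^2$ energy estimate on the equation satisfied by $w_n$ (linear in $w_n$ up to the above nonlinear term, so that taking imaginary parts of the pairing against $w_n$ kills the Laplacian) yields $\tfrac{d}{dt}\|w_n\|_{L^2}\lesssim \|f_n\|_{L^\infty}^4\|w_n\|_{L^2}$. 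Inserting the decay estimate with $\sigma=4$ turns this into the differential inequality $\tfrac{d}{dt}\|w_n\|_{L^2}\lesssim (at)^{-2}\|(f_0)_n\|_{L^2}^2\|w_n\|_{L^2}^3$ for $t\ge 1$.

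The core is a continuity/bootstrap argument on this inequality. Because $\sigma=4$ is exactly mass-critical, the weight $t^{-2}$ is (barely) integrable at infinity, so integrating $\tfrac{d}{dt}\|w_n\|_{L^2}^{-2}$ shows that $\|w_n(t)\|_{L^2}$ stays within a factor $\sqrt2$ of $\|w_n(1)\|_{L^2}$ provided $a^{-2}\|(f_0)_n\|_{L^2}^2\|w_n(1)\|_{L^2}^2$ is small; this smallness is forced uniformly in $n$ by taking $\epsilon(M)$ small, since $\|(f_0)_n\|_{L^2}\le\|\phi_0\|_{X_{\ubar c}}<\epsilon$ and $\|w_n(1)\|_{L^2}\lesssim\|z(f_0)_n\|_{L^2}<M$. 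The degeneracy of $J_a$ at $t=0$, where it reduces to multiplication by $z$ and the decay estimate blows up, is handled separately on $[0,1]$ by the local theory: there $\|f_n\|_{L^\infty}\le\|f_n\|_{H^1}$ and a standard $H^2$ energy estimate $\tfrac{d}{dt}\|f_n\|_{H^2}\lesssim\|f_n\|_{L^\infty}^4\|f_n\|_{H^2}$ keep $\|f_n(t)\|_{H^2}$ and $\|w_n(t)\|_{L^2}$ controlled. Combined with $\int_1^\infty\|f_n\|_{L^\infty}^4\,dt\lesssim a^{-2}\|(f_0)_n\|_{L^2}^2M^2$ small, this upgrades to the global bound $\|f_n(t)\|_{H^2}\le\tfrac32\|(f_0)_n\|_{H^2}$, hence after summation $\|\phi(t)\|_{X_{\ubar c}}\le\tfrac32\|\phi_0\|_{X_{\ubar c}}$ and global existence through the blow-up alternative of Theorem \ref{existnumeravel}.

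Assembling the three bounds: from $\|\phi(t)\|_{L^\infty}\le\sum_n\|f_n(t)\|_{L^\infty}$, the $X_{\ubar c}$ bound gives the branch $\|\phi\|_{L^\infty}\le\|\phi\|_{X_{\ubar c}}\lesssim\epsilon$, while the decay estimate gives $\sum_n(a_nt)^{-1/2}\|(f_0)_n\|_{L^2}^{1/2}\|w_n\|_{L^2}^{1/2}\lesssim t^{-1/2}\sum_n(\|(f_0)_n\|_{L^2}+\|z(f_0)_n\|_{L^2})\lesssim Mt^{-1/2}$, i.e. the other branch of the minimum. For $\nabla\phi$, since $\partial_x\phi=\sum_n(f_n)_z(x-c_ny)$ and $\partial_y\phi=-\sum_nc_n(f_n)_z(x-c_ny)$, I would bound $\sum_n(1+|c_n|)\|\partial_z f_n(t)\|_{L^\infty}$ via the Duhamel formula for $\partial_z f_n$ and the one-dimensional dispersive estimate $\|e^{iat\partial_{zz}}g\|_{L^\infty}\lesssim(at)^{-1/2}\|g\|_{L^1}$, which is exactly where the hypothesis $\sum_n\|\partial_z(f_0)_n\|_{L^1}<M$ enters; the bounded-time regime again uses $\|\partial_z f_n\|_{L^\infty}\lesssim\|f_n\|_{H^2}$, and the several $M$-controlled factors combine to $M^3$. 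The main obstacle I anticipate is closing the critical ($\sigma=4$) bootstrap \emph{uniformly in $n$} while matching the small-time regime (where $J_a$ degenerates) to the dispersive large-time regime, and verifying that every resulting estimate is summable in $n$ against the $l^1(H^2)$ and $N(\phi_0)$ data norms.
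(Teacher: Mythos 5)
Your proposal is correct and follows essentially the same route as the paper: reduce to the one-dimensional profile equations, obtain uniform-in-$n$ $L^\infty$ decay via the weighted (pseudo-conformal) estimate and the gradient bound via Duhamel's formula with the $L^1\to L^\infty$ dispersive estimate, then sum in $n$ against the $l^1$ data norms. The only difference is that where you run the $J_a$-operator bootstrap by hand, the paper rescales $h_n(t,z)=f_n(t,\sqrt{1+c_n^2}\,z)$ to the standard quintic equation and simply invokes the known small-data global existence and decay results from Cazenave's book, which are proved by exactly the argument you outline.
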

\begin{proof}
We write
$$
\phi(t,x,y)=\sum_{n\ge1} \phi_n(t,x,y),\quad \phi_n(t,x,y) = f_n(t,x-c_ny),\quad  \phi_0(x,y)=\sum_{n\ge 1} (f_0)_n(x-c_ny)
$$
where $f_n$ is a solution of
$$
i(f_n)_t + (1+c_n^2)(f_n)_{zz} + \lambda |f_n|^4f_n =0,\quad f_n(0)=(f_0)_n.
$$
Using the rescaling $h_n(t,z)=f_n(t,\sqrt{1+c_n^2}z)$, one arrives to
$$
i(h_n)_t + (h_n)_{zz} + \lambda |h_n|^4f_n =0,\quad  h_n(0,z)=(h_0)_n(z)=(f_0)_n(\sqrt{1+c_n^2}z).
$$
\textit{Step 1.} We now collect some properties of $h_n$. 

First of all, it follows from \cite[Theorem 5.3.1]{cazenave} that there exists $\epsilon_0>0$ such that, if $\|(h_0)_n\|_{H^1}<\epsilon_0$, then $h_n$ is global and
$$
\|h_n(t)\|_{H^1}\le \frac{3}{2}\|(h_0)_n\|_{H^1},\ t>0.
$$
From decay estimates, which are valid regardless of the sign of $\lambda$, one also has 
$$
\|h_n(t)\|_{L^\infty} \le \frac{C}{t^{1/2}}\|z(h_0)_n\|_{L^2}^{1/2}\|(h_0)_n\|_{L^2}^{1/2},\ t>0.
$$
These properties imply that
$$
\|h_n(t)\|_{L^\infty} \lesssim \min\left\{\|(h_0)_n\|_{H^1}, \frac{1}{t^{1/2}}\|z(h_0)_n\|_{L^2}^{1/2}\|(h_0)_n\|_{L^2}^{1/2}\right\}.
$$
Now we obtain an uniform estimate for $\|\partial_z h_n(t)\|_{L^\infty}$. Recall, from the proof of Theorem \ref{existnumeravel}, that there exists $T>0$ (independent of $n$) such that $h_n$ is defined on $[0,T]$ and $\|h_n(t)\|_{H^2}\le 2\|(h_n)_0\|_{H^2}$, for $0<t<T$. Hence
$$
\|\partial_z h_n(t)\|_{L^\infty} \le 2\|(h_n)_0\|_{H^2}, \ 0<t<T.
$$
For $t>T$, since
$$
\partial_z h_n (t)= e^{it\partial^2_{zz}}\partial_z (h_0)_n + \int_0  e^{i(t-s)\partial^2_{zz}}\partial_z \left(|h_n(s)|^4|h_n(s)|\right),
$$
one has
\begin{align*}
\|\partial_z h_n(t)\|_{L^\infty}\lesssim & \|\partial_z (h_n)_0\|_{L^1} + \int_0^t \frac{1}{\sqrt{t-s}}\|h_n(s)\|_{L^\infty}^3\|h_n(s)\|_{L^2} \|\partial_z h_n(s)\|_{L^2} ds\\
\lesssim & \|\partial_z (h_n)_0\|_{L^1} + \int_0^T \frac{1}{\sqrt{t-s}}\|h_n(s)\|_{H^1}^5 ds \\ &\quad + \int_T^t \frac{1}{\sqrt{t-s}}\frac{1}{s^{3/2}}\|z(h_0)_n\|_{L^2}^{3/2}\|(h_0)_n\|_{L^2}^{3/2}\|h_n(s)\|_{H^1}^2 ds\\
\lesssim &\|\partial_z (h_n)_0\|_{L^1} + C((h_0)_n)\bigg(\int_0^1 \frac{1}{\sqrt{t-s}} ds +\int_1^{t-1} \frac{1}{\sqrt{t-s}}\frac{1}{s^{3/2}} ds \\ &\quad + \int_{t-1}^t \frac{1}{\sqrt{t-s}}\frac{1}{s^{3/2}} ds \bigg)\\
\lesssim & \|\partial_z (h_n)_0\|_{L^1}+ C((h_0)_n)\left(\frac{1}{\sqrt{t-1}} + \int_1^{t-1}\frac{1}{s^{3/2}}ds + \frac{1}{(t-1)^{3/2}}\int_{t-1}^{t}\frac{1}{\sqrt{t-s}}\right)\\
\lesssim & \|\partial_z (h_n)_0\|_{L^1} + C((h_0)_n)\left(\frac{1}{\sqrt{t-1}} + 1 + \frac{1}{(t-1)^{3/2}}\right) \le \|\partial_z (h_n)_0\|_{L^1} + C((h_0)_n).
\end{align*}
where  $$C((h_0)_n)= \|(h_0)_n\|_{H^1}^5 + \|z(h_0)_n\|_{L^2}^{3/2}\|(h_0)_n\|_{L^2}^{3/2}\|(h_0)_n\|_{H^1}^2.$$

Thus we obtain the estimate
$$
\|\partial_z h_n(t)\|_{L^\infty} \lesssim \|(h_0)_n\|_{H^2} + \|\partial_z (h_n)_0\|_{L^1} + C((h_0)_n)
$$

\textit{Step 2.} Now we write the estimates of Step 1 in terms of $\phi$. Some simple computations show that, for any $1\le q\le \infty$,
$$
\|h_n(t)\|_{L^q}=(1+c_n^2)^{-1/2q}\|f_n(t)\|_{L^q}, \quad \|\partial_z h_n(t)\|_{L^q}=(1+c_n^2)^{-1/2q+1/2}\|\partial_z f_n(t)\|_{L^q}
$$
$$
\|\partial_{zz} h_n(t)\|_{L^q}=(1+c_n^2)^{-1/2q+1}\|\partial_{zz} f_n(t)\|_{L^q},\quad \|zh_n(t)\|_{L^2} = (1+c_n^2)^{-3/4}\|zf_n(t)\|_{L^2}
$$
Notice that, if $\|\phi_0\|_{X_{\ubar{c}}}<\epsilon<M$, then
$$
\|(h_n)_0\|_{H^2}\le \sum_{n\ge1} \|(h_n)_0\|_{H^2} \le \sum_{n\ge 1} (1+c_n^2)^{3/4}\|(f_n)_0\|_{H^2}\le \epsilon.
$$

Hence
\begin{align*}
\|\phi(t)\|_{L^\infty} &\le \sum_{n\ge 1} \|f_n(t)\|_{L^\infty} = \sum_{n\ge 1} \|h_n(t)\|_{L^\infty} \lesssim \sum_{n\ge 1} \min\left\{\|(h_0)_n\|_{H^1}, \frac{1}{t^{1/2}}\|z(h_0)_n\|_{L^2}^{1/2}\|(h_0)_n\|_{L^2}^{1/2}\right\} \\&\le \min\left\{\sum_{n\ge 1} \|(f_0)_n\|_{H^1}, \frac{1}{t^{1/2}}\sum_{n\ge 1}\|z(f_0)_n\|_{L^2}^{1/2}\|(f_0)_n\|_{L^2}^{1/2}\right\}\\&\le \min\left\{\epsilon, \frac{1}{t^{1/2}} \left(\sum_{n\ge 1}\|z(f_0)_n\|_{L^2}+\|(f_0)_n\|_{L^2}\right)\right\}\le \min\left\{\epsilon, \frac{M+\epsilon}{t^{1/2}} \right\}\lesssim \min\left\{\epsilon, \frac{M}{t^{1/2}}\right\}
\end{align*}
and, in a similar fashion,
\begin{align*}
\|\nabla \phi(t)\|_{L^\infty} &\le \sum_{n\ge 1} (1+c_n^2)^{1/2}\|\partial_z f_n(t)\|_{L^\infty} = \sum_{n\ge 1} \|\partial_z h_n(t)\|_{L^\infty} \\&\lesssim \sum_{n\ge 1} \|(h_0)_n\|_{H^2} + \|\partial_z (h_n)_0\|_{L^1} + \|(h_0)_n\|_{H^1}^5 + \|z(h_0)_n\|_{L^2}^{3/2}\|(h_0)_n\|_{L^2}^{3/2}\|(h_0)_n\|_{H^1}^2\\&\lesssim \epsilon + M + \sum_{n\ge 1} \|z(h_0)_n\|_{L^2}^{3}+\|(h_0)_n\|_{L^2}^{3}\|(h_0)_n\|_{H^1}^4\lesssim \epsilon+ M+M^3\lesssim M^3.
\end{align*}

\end{proof}

\begin{proof}[Proof of Theorem \ref{estabilidadenumeravel}]
We shall only prove the case $\sigma=4$, the general being completely analogous. The main idea of the proof is to obtain a "small data global existence" result for the $H^1$ component (see, for example, \cite[Theorem 6.2.1]{cazenave}).

\textit{Step 1. Setup.}
From the previous lemma, it follows that, for $\epsilon>0$ sufficiently small, $\phi$ is global and
$$
\|\phi(t)\|_{L^\infty}\lesssim \min\left\{\epsilon, \frac{M}{t^{1/2}} \right\}\lesssim M,\ \|\nabla \phi(t)\|_{L^\infty}\lesssim M^3, \|\phi(t)\|_{X_{\ubar{c}}}\lesssim \epsilon,\ t>0.
$$

Fix $v_0\in H^1(\real^2)$ and consider the corresponding solution $v$ of 
\begin{equation}\label{equacaov}
iv_t +  v_{xx} + v_{yy} + \lambda(|v+\phi|^4(v+\phi) -\sum_{n\ge 1} |\phi_n|^4\phi_n)=0,\quad \phi_n(t,x,y)=f_n(t,x-c_ny).
\end{equation}
We recall that $v$ is defined on $(0,T(u_0))$, where $u_0=v_0+\phi_0$. Since $\phi$ is global in $X_{\ubar{c}}$, the blow-up alternative of Theorem \ref{existnumeravel} then implies that, if $T(u_0)<\infty$,
$$
\|v(t)\|_{H^1}\to \infty, \ t\to T(u_0).
$$ 

We develop the nonlinear part as
$$
\lambda(|v+\phi|^4(v+\phi) -\sum_{n\ge 1} |\phi_n|^4\phi_n)=\sum_{i=0}^5 g_i(v,\phi),
$$
where each $g_i$ has exactly $i$ powers of $v$. Define, for $i=3,4,5$, $\rho_i=i+1$ and $\gamma_i$ such that $(\gamma_i,\rho_i)$ is an admissible pair, that is, $2/\gamma_i = 1- 2/\rho_i$. In particular, $\rho_3=\gamma_3=4$. Consider, for $0<t<T(u_0)$,
\begin{equation}\label{funcaoh}
h(t)=\|v\|_{L^\infty((0,t),H^1(\real^2))} + \sum_{i=3}^{5} \|v\|_{L^{\gamma_i}((0,t),W^{1,\rho_i}(\real^2))}.
\end{equation}
We write Duhamel's formula,
$$
v(t)=U(t)v_0 + \sum_{i=0}^{5}\int_0^t U(t-s)g_i(v(s),\phi(s))ds.
$$
Therefore, for any admissible pair $(q,r)$,
\begin{equation}\label{estimativa}
\|v\|_{L^q((0,t),W^{1,r}(\real^2))}\le C\|v_0\|_{H^1} + \sum_{i=0}^{5} \left\|\int_0^\cdot U(\cdot-s)g_i(v(s),\phi(s))ds\right\|_{L^q((0,t),W^{1,r}(\real^2))}.
\end{equation}
For the sake of simplicity, we shall omit both the temporal and spatial domains. In the next steps, we shall estimate each term of the sum by a suitable power of $h(t)$. All constants depending solely on $M$ shall be ommited.

\textit{Step 2. Estimate of higher-order terms in $v$ on \eqref{estimativa}.} Here, we shall estimate
$$
\left\|\int_0^\cdot U(\cdot-s)g_i(v(s),\phi(s))ds\right\|_{L^q(W^{1,r})},\quad i=3,4,5.
$$
Take $i=3$. Then it follows from Step 1 that
\begin{align*}
\left\|\int_0^\cdot U(\cdot-s)g_3(v(s),\phi(s))ds\right\|_{L^q(W^{1,r})}&\lesssim \|g_3(v,\phi)\|_{L^{\gamma_3'}(W^{1,\rho_3'})} \\ &\lesssim \||v|^3|\phi|^2\|_{L^{4/3}(W^{1,4/3})}\lesssim \|v\|^3_{L^4(W^{1,4})} \\&\lesssim \|v\|_{L^{\gamma_3}(W^{1,\rho_3})}^3 \lesssim h(t)^3. 
\end{align*}

Now we treat the case $i=4,5$:
\begin{align*}
\left\|\int_0^\cdot U(\cdot-s)g_i(v(s),\phi(s))ds\right\|_{L^q(W^{1,r})}&\lesssim \|g_i(v,\phi)\|_{L^{\gamma_i'}(W^{1,\rho_i'})} \\&\lesssim \||v|^i|\phi|^{5-i}\|_{L^{\gamma_i'}(W^{1,\rho_i'})} \lesssim \|v\|_{L^{\mu_i}(L^{\rho_i})}^{i-1}\|v\|_{L^{\gamma_i}(W^{1,\rho_i})}
\end{align*}
where
$$
\mu_i=\frac{(i-1)(i+1)}{2}>\gamma_i.
$$
Then, through the interpolation $L^{\gamma_i} - L^{\mu_i} - L^\infty$ and the injection $H^1\hookrightarrow L^{\rho_i}$,
$$
\left\|\int_0^\cdot U(\cdot-s)g_i(v(s),\phi(s))ds\right\|_{L^q(W^{1,r})}\lesssim h(t)^i, \ i=4,5.
$$

\textit{Step 3. Estimate of the linear term in $v$.}
\begin{align*}
\left\|\int_0^\cdot U(\cdot-s)g_1(v(s),\phi(s))ds\right\|_{L^q(W^{1,r})}\lesssim \|g_1(v,\phi)\|_{L^1(H^1)}\lesssim \||v||\phi|^4\|_{L^1(H^1)}.
\end{align*}
Using the properties deduced in Step 1,
\begin{align*}
\||v||\phi|^4\|_{L^1(H^1)}&\lesssim \int_0^t \|\phi(s)\|_{L^\infty}^3\|\phi(s)\|_{W^{1,\infty}}\|v(s)\|_{H^1} ds\\&\lesssim \|v\|_{L^\infty(H^1)}\|\phi\|_{L^\infty(W^{1,\infty})}\|\phi\|_{L^\infty(L^\infty)}^{1/2}\left(\int_0^t \|\phi(s)\|_{L^\infty}^{3/2} ds\right) \\&\lesssim
\|v\|_{L^\infty(H^1)}\|\phi\|_{L^\infty(W^{1,\infty})}\|\phi\|_{L^\infty(L^\infty)}^{1/2}\left(1+ \int_1^t \frac{1}{s^{3/2}} ds\right) \\&\lesssim \epsilon^{1/2}\|v\|_{L^\infty(H^1)}\lesssim \epsilon^{1/2}h(t).
\end{align*}
\textit{Step 4. Estimate of the term independent on $v$.}
Define $$D=\{(j,k,l,m,n)\in \nat^5: (k,l,m,n)\neq (j,j,j,j)\}$$. Then
\begin{align*}
&\left\|\int_0^\cdot U(\cdot-s)g_0(v(s),\phi(s))ds\right\|_{L^q(W^{1,r})}\lesssim \|g_0(v,\phi)\|_{L^1(H^1)}= \left\||\phi|^4\phi - \sum_{n\ge 1} |\phi_n|^4\phi_n\right\|_{L^1(H^1)} \\=\ & \left\|\sum_{ (j,k,l,m,n)\notin D} \phi_j\overline{\phi_k}\phi_l\overline{\phi_m}\phi_n\right\|_{L^1(H^1)}\le \sum_{ (j,k,l,m,n)\notin D} \int_0^t\|(\phi_j\overline{\phi_k}\phi_l\overline{\phi_m}\phi_n)(s)\|_{H^1}ds\\\le\ &\int_0^t \left(\sum_{l,m,n\ge 1} \|\phi_l(s)\|_{L^\infty}\|\phi_m(s)\|_{L^\infty}\|\phi_n(s)\|_{L^\infty}\right)\left(\sum_{j\neq k} \|\nabla\phi_j(s)\phi_k(s)\|_{L^2} + \|\phi_j(s)\phi_k(s)\|_{L^2}\right)ds\\\le \ & \int_0^t \left(\sum_{n\ge 1}\|\phi_n(s)\|_{L^\infty}\right)^3\left(\sum_{j\neq k} \frac{(1+c_j^2)^{1/2}}{|c_j-c_k|^{1/2}}\|\partial_zf_j(s)\|_{L^2}\|f_k(s)\|_{L^2} + \frac{\|f_j(s)\|_{L^2}\|f_k(s)\|_{L^2}}{|c_j-c_k|^{1/2}} \right)ds.
\end{align*}
Recalling that 
$$\|f_k(s)\|_{2}=\|(f_0)_k\|_{2}, \|\partial_z f_k(s)\|_{2}\le \frac{3}{2}\|\partial_z (f_0)_k\|_{2},\ k\in\nat, s>0,
$$
one estimates
\begin{align*}
&\sum_{j\neq k} \frac{(1+c_j^2)^{1/2}}{|c_j-c_k|^{1/2}}\|\partial_zf_j(s)\|_{L^2}\|f_k(s)\|_{L^2} + \frac{\|f_j(s)\|_{L^2}\|f_k(s)\|_{L^2}}{|c_j-c_k|^{1/2}} \\\le\ & \frac{3}{2}\sum_{j\neq k} \frac{(1+c_j^2)^{1/2}}{|c_j-c_k|^{1/2}}\|\partial_z(f_0)_j\|_{L^2}\|(f_0)_k\|_{L^2} + \frac{\|(f_0)_j\|_{L^2}\|(f_0)_k\|_{L^2}}{|c_j-c_k|^{1/2}}\lesssim M.
\end{align*}
Hence, by Lemma \ref{decaimento},
\begin{align*}
&\left\|\int_0^\cdot U(\cdot-s)g_0(v(s),\phi(s))ds\right\|_{L^q(W^{1,r})}\lesssim \int_0^t \left(\sum_{n\ge 1}\|\phi_n(s)\|_{L^\infty}\right)^3 \\\lesssim\ & 
\|\phi\|_{L^\infty(L^\infty)}^{1/2}\int_0^t \left(\sum_{n\ge 1}\|\phi_n(s)\|_{L^\infty}\right)^{5/2}\lesssim \epsilon^{1/2}\left(1+\int_1^t \frac{1}{s^{5/4}}ds\right)\lesssim \epsilon^{1/2}.
\end{align*}

\textit{Step 5. Estimate of the quadratic term in $v$.} 
Recalling that $\phi, \nabla \phi$ are bounded in $L^\infty(L^\infty)$, one has
\begin{align*}
& \left\|\int_0^\cdot U(\cdot-s)g_2(v(s),\phi(s))ds\right\|_{L^q(W^{1,r})}\lesssim \|g_2(v,\phi)\|_{L^{4/3}(W^{1,4/3})}\lesssim \||v|^2|\phi|^3\|_{L^{4/3}(W^{1,4/3})}\\\lesssim&
\left(\int_0^t \int |\phi|^4|v|^{8/3} + |\phi|^4|v|^{4/3}|\nabla v|^{4/3}+ |\phi|^{8/3}|v|^{8/3}|\nabla\phi|^{4/3} \right)^{3/4}\\\lesssim&
\left(\int_0^t \|\phi\|_{L^\infty}^{8/3}\left(\int |v|^2 + |\nabla v|^2 + |v|^4 + |\nabla v|^4\right)\right)^{3/4}\\\lesssim& \left(\left(\int_0^t \|\phi\|_{L^\infty}^{8/3}\int |v|^2 + |\nabla v|^2\right) + \left(\int_0^t \int |v|^4 + |\nabla v|^4\right)\right)^{3/4}\\\lesssim& \left(\|v\|^2_{L^\infty(H^1)}\left(1+\int_1^t \frac{1}{s^{8/6}}ds\right) + \|v\|_{L^4(W^{1,4})}^4\right)^{3/4} \lesssim h(t)^{3/2} + h(t)^3.
\end{align*}

\textit{Step 6. Conclusion.} Putting together Steps 2, 3, 4 and 5, there exists a  constant $D$, depending only on $M$, such that
\begin{equation}\label{estimativah}
h(t)\le D\left(\|v_0\|_{H^1} + \epsilon^{1/2}+ h(t)\epsilon^{1/4} + h(t)^{3/2} + h(t)^3 + h(t)^4 + h(t)^5\right).
\end{equation}
For $\epsilon$ sufficiently small, we arrive at
\begin{equation}
h(t)\lesssim  \|v_0\|_{H^1} +  \left(h(t)^{3/2} + h(t)^3 + h(t)^4 + h(t)^5\right)
\end{equation}
If $\|v_0\|_{H^1}$ is sufficiently small, then the above inequality implies $h(t)\in [0,h_0]\cup [h_1, \infty)$, for some $h_0<\delta, h_1$. Since $h(0)=0$, by continuity, one has $h(t)<\delta$, for all $t<T(u_0)$. The blow-up alternative then implies that $T(u_0)=\infty$. This implies that
$$
\|u-\phi\|_{L^\infty((0,\infty),H^1(\real^2))}\le \delta(\epsilon, M).
$$
Now notice that this property is also valid for $\tilde{u}$, since it is a solution of (NLS) with $v_0\equiv 0$. Hence
$$
\|u-\tilde{u}\|_{L^\infty((0,\infty),H^1(\real^2))}\le \|u-\phi\|_{L^\infty((0,\infty),H^1(\real^2))} + \|\tilde{u}-\phi\|_{L^\infty((0,\infty),H^1(\real^2))}\le 2\delta(\epsilon,M).
$$
\end{proof}

\section{Theory for the plane wave transform}

We recall the definition of the plane wave transform:
\begin{equation}
(Tf)(x,y) = \int f(x-cy,c)dc,\quad f\in C_0(\real^2).
\end{equation}

Now we state some simple properties of this transform, whose proof is quite straightforward.
\begin{lema}[Algebraic properties]
Fix any $f\in C_0(\real^2)$.
\begin{enumerate}
\item Translation property:
$$T(f(\cdot + z_0,\cdot + c_0))(x,y)= (Tf)(x+c_0y+z_0,y)$$
\item Scaling property:
$$T(f( \mu \cdot,\lambda \cdot))(x,y)=\frac{1}{\lambda}(Tf)\left(\mu x, \frac{\mu}{\lambda}y\right)$$
\item Monotonicity: if $g\in C_0(\real^2)$,
$$
f\le g \Rightarrow Tf\le Tg
$$
\item Derivation: if $f\in C^1_0(\real^2)$,
$$\nabla (Tf) =(T(f_z), -T(cf_z))$$
\end{enumerate}
\end{lema}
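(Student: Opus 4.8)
The plan is to verify each of the four properties by direct computation, handling the first two with a linear change of variables in the $c$-integral, the third with the monotonicity of the integral, and the fourth by differentiation under the integral sign. Throughout I would use that $f\in C_0(\real^2)$ has compact support, so that for each fixed $(x,y)$ the map $c\mapsto f(x-cy,c)$ is compactly supported in $c$ and every integral converges absolutely; this also supplies the domination needed in the last item.

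For the translation property, I would set $g(z,c)=f(z+z_0,c+c_0)$ and compute $(Tg)(x,y)=\int f(x-cy+z_0,c+c_0)\,dc$; the substitution $c\mapsto c+c_0$ turns the first argument into $x+c_0y+z_0-cy$, which is exactly the first argument of $Tf$ evaluated at $(x+c_0y+z_0,y)$. For the scaling property, with $g(z,c)=f(\mu z,\lambda c)$ I would write $(Tg)(x,y)=\int f(\mu x-\mu cy,\lambda c)\,dc$ and apply $c\mapsto \lambda c$, whose Jacobian produces the prefactor and whose effect on the first argument yields the rescaled evaluation point $(\mu x,(\mu/\lambda)y)$; here one must keep track of the sign of $\lambda$ when reorienting the real line, which is precisely the step that generates the factor $1/\lambda$.

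The monotonicity property is immediate: if $f\le g$ pointwise, then $f(x-cy,c)\le g(x-cy,c)$ for every $c$, and integrating in $c$ preserves the inequality, so $Tf\le Tg$. For the derivation property I would differentiate under the integral sign, using the chain rule identity $\partial_y[f(x-cy,c)]=-c\,f_z(x-cy,c)$ and $\partial_x[f(x-cy,c)]=f_z(x-cy,c)$, to obtain $\partial_x(Tf)=\int f_z(x-cy,c)\,dc=T(f_z)$ and $\partial_y(Tf)=\int(-c)f_z(x-cy,c)\,dc=-T(cf_z)$.

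The only step requiring genuine justification is the interchange of differentiation and integration in the last property. The hard part, such as it is, will be checking the hypotheses of the differentiation-under-the-integral criterion: since $f\in C^1_0(\real^2)$, the functions $f_z$ and $cf_z$ are continuous and, for $(x,y)$ in any bounded neighbourhood, supported in a fixed compact set of $c$-values, hence bounded by an integrable dominating function. With that observation the dominated convergence theorem applies and the interchange is legitimate; the remaining manipulations are routine.
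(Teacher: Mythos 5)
Your proof is correct and is exactly the direct verification the paper has in mind (the authors state the lemma without proof, calling it straightforward), including the right observation that compact support of $f$ uniformly localizes the $c$-integration and justifies differentiating under the integral sign. One small caveat on the scaling property: the change of variables $c\mapsto\lambda c$ actually produces the factor $1/|\lambda|$, so the identity as written holds for $\lambda>0$; your remark that sign-tracking ``generates the factor $1/\lambda$'' should really conclude with $1/|\lambda|$ when $\lambda<0$.
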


\begin{prop}[$L^p$ integrability, $p>2$]\label{integrabilidadeLp}
Fix $f\in C_0(\real^2)$. Then
$$
\|Tf\|_{L^p}^2 \le \int \frac{1}{|c-c'|^{2/p}}\|f(c)\|_{L_z^{p/2}}\|f(c')\|_{L_z^{p/2}}dcdc',\quad 2<p<\infty
$$
and
$$
\|Tf\|_{L^\infty_y(L^p_x)}\le \|f\|_{L^1_c(L^p_z)},\quad 1\le p\le\infty.
$$
Consequently, $T$ can be continuously extended in a unique way to $L^1_c(L^p_z)$, for any $1\le p< \infty$.
\end{prop}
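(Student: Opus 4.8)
The plan is to prove the two estimates separately, both resting on Minkowski's integral inequality combined with the linear change of variables that decouples a pair of plane waves. Since $f\in C_0(\real^2)$ has compact support, every integral below converges absolutely and all interchanges are legitimate, so the only real content is bookkeeping of exponents and the Jacobian. For the first (main) estimate I would begin by writing the modulus squared of $Tf$ as a double integral in the speed variables,
$$|Tf(x,y)|^2 = \int\int f(x-cy,c)\overline{f(x-c'y,c')}\,dc\,dc'.$$
Because $2<p<\infty$ we have $p/2>1$, so $\|\cdot\|_{L^{p/2}_{x,y}}$ is a genuine norm and Minkowski's integral inequality applies to the outer $dc\,dc'$ integration. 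Taking $L^{p/2}_{x,y}$ norms and pulling the norm inside gives
$$\|Tf\|_{L^p}^2 = \bigl\||Tf|^2\bigr\|_{L^{p/2}_{x,y}} \le \int\int \left\|f(\cdot-cy,c)\,\overline{f(\cdot-c'y,c')}\right\|_{L^{p/2}_{x,y}}dc\,dc'.$$

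The second step is to evaluate the inner norm for fixed $c\neq c'$. Here I would apply the substitution $w=x-cy$, $z=x-c'y$, whose Jacobian determinant is $c-c'$; this is exactly the change of variables already used in Section 2 to estimate the products $\phi_j\phi_k$. It factors the integral and produces the weight $|c-c'|^{-1}$:
$$\int\int |f(x-cy,c)|^{p/2}|f(x-c'y,c')|^{p/2}\,dx\,dy = \frac{1}{|c-c'|}\,\|f(c)\|_{L^{p/2}_z}^{p/2}\,\|f(c')\|_{L^{p/2}_z}^{p/2}.$$
Raising to the power $2/p$ yields precisely the factor $|c-c'|^{-2/p}\|f(c)\|_{L^{p/2}_z}\|f(c')\|_{L^{p/2}_z}$, and substituting back into the previous display gives the claimed bound. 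The diagonal $\{c=c'\}$ is a null set in $\real^2$, so it contributes nothing.

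For the second estimate I would fix $y$ and regard $Tf(\cdot,y)=\int f(\cdot-cy,c)\,dc$ as an $L^p_x$-valued integral. Minkowski's integral inequality, together with the translation invariance $\|f(\cdot-cy,c)\|_{L^p_x}=\|f(c)\|_{L^p_z}$, gives $\|Tf(\cdot,y)\|_{L^p_x}\le \|f\|_{L^1_c(L^p_z)}$ with a bound independent of $y$, hence the $L^\infty_y$ estimate; this argument uses no cancellation and is valid for the full range $1\le p\le\infty$. The continuous extension is then immediate: the $L^\infty_y(L^p_x)$ estimate shows that $T$ is bounded from $L^1_c(L^p_z)$ into a complete space, and $C_0(\real^2)$ is dense in $L^1_c(L^p_z)$ for $1\le p<\infty$, so $T$ extends uniquely by continuity.

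The main obstacle is the correct application of Minkowski's integral inequality to the vector-valued double integral, which requires the outer exponent $p/2$ to be at least $1$; this is the structural reason the first estimate is confined to $p>2$ (the borderline $p=2$ being handled separately in Corollary \ref{integrabilidadeL2}), and it foreshadows the later fact that $Tf$ generically fails to be square-integrable. Beyond this, the only point needing care is verifying that the substitution $w=x-cy,\ z=x-c'y$ is a valid global diffeomorphism with constant Jacobian $c-c'$ for each fixed $c\neq c'$, which is clear since the associated matrix is invertible precisely off the diagonal.
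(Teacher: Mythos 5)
Your proposal is correct and follows essentially the same route as the paper: the paper obtains the bound by pairing $|Tf|^2$ against a test function $\psi\in L^{(p/2)'}$ and applying H\"older, which is precisely the duality proof of the Minkowski integral inequality you invoke, and the decisive step in both arguments is the same change of variables $(x,y)\mapsto(x-cy,x-c'y)$ with Jacobian $|c-c'|$. The second estimate and the density-based extension also match the paper's argument.
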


\begin{proof}
Take $\psi\in C_0(\real^2)$. Then, for $q=p/2$,
\begin{align*}
\left|\int |(Tf)(x,y)|^2\psi(x,y) dxdy\right|&\le\int f(x-cy,c)f(x-c'y,c')\psi(x,y)dxdydcdc' \\&\le \int \left(\int |f(x-cy,c)f(x-c'y,c')|^qdxdy\right)^{1/q}\|\psi\|_{L^{q'}} dcdc' \\&\le 
\left(\int \frac{1}{|c-c'|^{1/q}}\left(\int |f(z,c)f(z',c')|^qdzdz'\right)^{1/q} dcdc'\right)\|\psi\|_{L^{q'}}\\&\le \left(\int \frac{1}{|c-c'|^{q}}\|f(c)\|_{L_z^{q}}\|f(c')\|_{L_z^{q}}dcdc'\right)\|\psi\|_{L^{q'}}.
\end{align*}
This implies that
$$
\|Tf\|_{L^p}^2=\||Tf|^2\|_{L^q}\le \int \frac{1}{|c-c'|^{2/p}}\|f(c)\|_{L_z^{q}}\|f(c')\|_{L_z^{q}}dcdc'.
$$
On the other hand, given $\phi\in C_0(\real)$, for a.e. $y\in\real$,
\begin{align*}
\left|\int Tf(x,y)\phi(x)dx\right|&\le \int |f(x-cy,c)||\phi(x)|dxdc \le \int \left(\int |f(x-cy,c)|^pdx\right)^{1/p}\|\phi\|_{L^{p'}} dc \\&= \|f\|_{L^1_c(L^p_z)}\|\phi\|_{L^{p'}}.
\end{align*}

\end{proof}
\begin{nota}\label{estaremL4}
As a consequence of the above result, for any $f\in C_0(\real^2)$,
$$\|Tf\|_{L^p(\real^2)}\lesssim \|f\|_{L^1_c(L^{p/2}_z)} + \|f\|_{L^\infty_c(L^{p/2}_z)}, \quad 2<p<\infty.
$$ 
Indeed,
\begin{align*}
&\int \frac{1}{|c-c'|^{2/p}}\|f(c)\|_{L_z^{p/2}}\|f(c')\|_{L_z^{p/2}}dcdc' = \int_{|c-c'|<1} \frac{1}{|c-c'|^{2/p}}\|f(c)\|_{L_z^{p/2}}\|f(c')\|_{L_z^{p/2}}dcdc' \\+\ & \int_{|c-c'|>1} \frac{1}{|c-c'|^{2/p}}\|f(c)\|_{L_z^{p/2}}\|f(c')\|_{L_z^{p/2}}dcdc'\le
\int_{|c-c'|<1} \frac{1}{|c-c'|^{2/p}}\|f(c)\|_{L_z^{p/2}}\|f\|_{L^\infty_c(L_z^{p/2})}dcdc' \\+\ & \int_{|c-c'|>1} \|f(c)\|_{L_z^{p/2}}\|f(c')\|_{L_z^{p/2}}dcdc' \le \left(\int_{|\tilde{c}|<1}\frac{1}{|\tilde{c}|^{2/p}}d\tilde{c}\right)\|f\|_{L^1_c(L^{p/2}_z)}\|f\|_{L^\infty_c(L_z^{p/2})} + \|f\|_{L^1_c(L^{p/2}_z)}^2.
\end{align*}
\end{nota}

\begin{exemplo}

It is a simple exercise to compute the transform of the characteristic function of the unit square: if $f=\mathds{1}_{[0,1]^2}$, then
$$
(Tf)(x,y)=\left\{\begin{array}{lll}
1, & 0\le x\le 1, & x-1\le y\le x\\
x/y, & 0\le x\le 1, & y\ge x\\
(x-1)/y, & 0\le x\le 1, & y\le x-1\\
(y-x)/y, & x\le 0, &  x-1\le y\le x\\
-1/y, & x\le 0, & y\le x-1\\
(y-x+1)/y, & x\ge 1, & x-1\le y\le x\\
1/y, & x\ge 1, &  y\ge x\\
0, & \mbox{ otherwise} & 
\end{array}\right.
$$
\begin{figure}[h]\label{transformada}
\centering
\includegraphics[width=7cm, height=5cm]{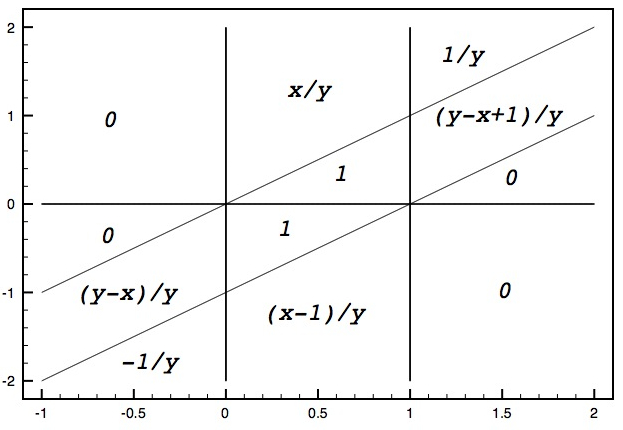}
\caption{The transform of $\mathds{1}_{[0,1]^2}$.}
\end{figure}

We claim that $Tf\notin L^2(\real^2)$. In fact, one has
$$
\int_{\{x\ge 1, y\ge x\}} |Tf|^2 dxdy = \int_1^\infty \left(\int_1^y \frac{1}{y^2}dx\right) dy = \int_1^\infty \frac{y-1}{y^2} dy = \infty.
$$
\end{exemplo}
\begin{cor}\label{naointegrabilidadeL2}
If $f\in L^1_c(L^\infty_z)$ satisfies $f\ge 0$ and $f\notequiv 0$, then $Tf\notin L^2(\real^2)$.
\end{cor}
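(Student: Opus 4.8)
The plan is to reduce to an indicator function by monotonicity and then compute the $L^2$ norm of its transform exactly. Since $f\ge 0$ and $f\not\equiv 0$, there is $\delta>0$ for which $\{f\ge\delta\}$ has positive measure; intersecting with a large ball we obtain a set $E\subset\real^2$ with $0<|E|<\infty$ and $f\ge \delta\,\mathds{1}_E$. By the monotonicity property of $T$ we have $0\le \delta\,T\mathds{1}_E\le Tf$ pointwise, hence $\|Tf\|_{L^2}\ge \delta\,\|T\mathds{1}_E\|_{L^2}$, and it suffices to prove that $T\mathds{1}_E\notin L^2(\real^2)$ for an arbitrary set $E$ of positive finite measure.

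The core is an exact formula for $\|T\mathds{1}_E\|_{L^2}^2$. Writing the slices $E_c=\{z\in\real:(z,c)\in E\}$ and $m(c)=|E_c|$, all integrands are nonnegative, so Tonelli's theorem justifies every interchange. I would expand $(T\mathds{1}_E)^2$ as a double integral in $c,c'$, perform the substitution $u=x-cy$ in the $x$-integral to recognize a correlation of the slices, and then substitute $w=(c-c')y$ in the $y$-integral (legitimate for $c\ne c'$, a full-measure set of pairs). Using the elementary identity $\int_\real |E_c\cap(E_{c'}-w)|\,dw=m(c)m(c')$, this yields
\begin{equation*}
\|T\mathds{1}_E\|_{L^2(\real^2)}^2=\int_\real\int_\real \frac{m(c)\,m(c')}{|c-c'|}\,dc\,dc',
\end{equation*}
which is precisely the $p=2$ endpoint of Proposition \ref{integrabilidadeLp}, now realized as an equality rather than a bound.

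It remains to show the right-hand side diverges. Since $\int_\real m=|E|>0$, the function $m$ is not a.e.\ zero, so there is $\delta'>0$ with $A=\{c:m(c)\ge\delta'\}$ of positive measure, and the double integral dominates $(\delta')^2\iint_{A\times A}|c-c'|^{-1}\,dc\,dc'$. The only delicate step is that this last integral is $+\infty$, because of the non-integrable singularity of the kernel on the diagonal: picking a Lebesgue density point $c_0$ of $A$ and estimating $\int_A|c_0-c'|^{-1}\,dc'$ along dyadic annuli $2^{-k-1}<|c'-c_0|<2^{-k}$, density $1$ at $c_0$ forces $|A\cap\mathrm{annulus}_k|\gtrsim 2^{-k}$ for large $k$, so each annulus contributes $\gtrsim 1$ and the sum over $k$ diverges; integrating over $A$ then gives $+\infty$. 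The principal obstacle is precisely this: one cannot in general bound $f$ below by a multiple of the indicator of a \emph{rectangle} (a positive-measure set may contain no box), so the quick reduction to the Example via the scaling and translation properties is unavailable, and one is forced to handle a general measurable $E$ through the exact $L^2$ identity together with the density argument above. Combining the three steps gives $\|Tf\|_{L^2}=\infty$, i.e.\ $Tf\notin L^2(\real^2)$.
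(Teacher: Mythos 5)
Your proof is correct, and it takes a genuinely different route from the paper's. The paper argues in one line: it finds $a>0$ and a square $Q$ with $f\ge a\mathds{1}_Q$, invokes monotonicity, and reduces via the translation and scaling properties to the explicit computation of $T\mathds{1}_{[0,1]^2}$ carried out in the Example preceding the corollary, where non-membership in $L^2$ is checked directly on the region $\{x\ge 1,\ y\ge x\}$. You instead reduce only to $\mathds{1}_E$ for an arbitrary set $E$ of positive finite measure, prove the exact identity $\|T\mathds{1}_E\|_{L^2}^2=\iint m(c)m(c')|c-c'|^{-1}\,dc\,dc'$ with $m(c)=|E_c|$ (the $p=2$ analogue, as an equality, of Proposition \ref{integrabilidadeLp}, and a physical-space counterpart of the Fourier formula in Corollary \ref{integrabilidadeL2}), and force divergence from the non-integrable diagonal singularity via a Lebesgue density point. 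What the paper's route buys is brevity and a concrete picture; what yours buys is that it actually covers the hypothesis as stated: for a merely measurable $f\in L^1_c(L^\infty_z)$ the set $\{f\ge\delta\}$ need not contain a square (your fat-Cantor-type objection is exactly right), so the paper's reduction is only literally justified for continuous $f$ --- which is how the result is announced in the introduction ($f\in C_0(\real^2)$) --- whereas your argument closes that gap. All the interchanges you perform are legitimate by Tonelli since every integrand is nonnegative, and the density-point estimate over dyadic annuli holds at a.e.\ point of $A$, which is enough to conclude $\iint_{A\times A}|c-c'|^{-1}\,dc\,dc'=\infty$.
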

\begin{proof}
Since $f\notequiv 0$ and $f\ge 0$, there exists $a>0$ and a square $Q\subset \real^2$ such that $f\ge a\mathds{1}_Q$, which implies that $Tf\ge aT\mathds{1}_Q$. Using the translation and scaling properties, one may write the transform of $\mathds{1}_Q$ in terms of $T\mathds{1}_{[0,1]^2}$, which does not belong to $L^2(\real^2)$.
\end{proof}

\begin{exemplo}
Consider $f=\mathds{1}_{[0,1]^2} - \mathds{1}_{[0,1]\times [1,2]}$. Then, using the expression for $T\mathds{1}_{[0,1]^2}$ and the translation property, it is easy to check that $Tf\in L^2(\real^2)$.
\end{exemplo}

\begin{exemplo}
Let's compute the transform of $f(z,c)=e^{-c^2-z^2}$:
\begin{align*}
\int e^{-c^2-(x-cy)^2} dc &= \int e^{-c^2-x^2+2cxy - c^2y^2}dc = e^{-x^2}\int e^{-(1+y^2)\left(c - \frac{xy}{1+y^2}\right)^2}e^{\frac{x^2y^2}{1+y^2}} dc\\
&=e^{-\frac{x^2}{1+y^2}}\frac{1}{\sqrt{1+y^2}}\int e^{-c^2} dc = 2\pi \frac{1}{\sqrt{4\pi(1+y^2)}}e^{-\frac{x^2}{1+y^2}}.
\end{align*}
It is interesting that $f$ is the kernel of the 2D-heat kernel in Fourier variables at time $t=1$, and that its transform is the 1D-heat kernel in physical variables at time $t=1+y^2$.
\end{exemplo}

\begin{prop}\label{propformula}
Given $f\in L^1(\real^2)$, one has
\begin{equation}\label{formula}
(Tf)(x,y)=\mathcal{F}_\xi^{-1}\left[(\mathcal{F}_{z,c}f)(\cdot,y\cdot)\right](x),\quad a.e.\ (x,y)\in\real^2
\end{equation}
where $\mathcal{F}_\xi$ denotes the Fourier transform in the $\xi$ variable. Consequently, one has the following inversion formula
$$
f(z,c)=\mathcal{F}_{\xi,\eta}^{-1}\left[(\mathcal{F}_x(Tf))\left(\xi,\frac{\eta}{\xi}\right)\right](z,c), \quad a.e.\ (z,c)\in\real^2
$$
\end{prop}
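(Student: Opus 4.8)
The plan is to reduce both statements to the single clean identity
$$
\mathcal{F}_x(Tf)(\xi,y)=(\mathcal{F}_{z,c}f)(\xi,y\xi),
$$
which relates the one-dimensional Fourier transform of $Tf$ in the physical variable $x$ to the full two-dimensional Fourier transform of the profile $f$. Once this is in hand, both displayed formulas follow from Fourier inversion. To set it up I would first record that $T$ is well defined slicewise: the map $(x,c)\mapsto(x-cy,c)$ is measure preserving, so for every fixed $y$ the function $(x,c)\mapsto f(x-cy,c)$ lies in $L^1(\real^2_{x,c})$ with the same norm as $f$, and Fubini gives $Tf(\cdot,y)\in L^1(\real_x)$ for every $y$. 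Hence $\mathcal{F}_x(Tf)(\cdot,y)$ is a genuine function in $C_0(\real)$.

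For the core identity I would write $\mathcal{F}_x(Tf)(\xi,y)=\int e^{-ix\xi}\int f(x-cy,c)\,dc\,dx$, where Fubini is legitimate by the $L^1(\real^2_{x,c})$ bound just obtained, and then perform the change of variables $z=x-cy$ (Jacobian one), which turns the phase into $e^{-i(z+cy)\xi}$. This yields
$$
\mathcal{F}_x(Tf)(\xi,y)=\iint e^{-iz\xi}e^{-ic(y\xi)}f(z,c)\,dz\,dc=(\mathcal{F}_{z,c}f)(\xi,y\xi),
$$
and since $f\in L^1(\real^2)$ the right-hand side is a bounded continuous function of $\xi$ for each fixed $y$, so the identity is meaningful pointwise.

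To obtain the first formula, I would observe that, for fixed $y$, $Tf(\cdot,y)\in L^1(\real)$ and its Fourier transform is exactly $\xi\mapsto(\mathcal{F}_{z,c}f)(\xi,y\xi)$; Fourier inversion on $L^1$ (read, if necessary, in the tempered-distribution sense, which forces the a.e.\ equality because $Tf(\cdot,y)$ is an honest $L^1$ function) gives $Tf(\cdot,y)=\mathcal{F}_\xi^{-1}\big[(\mathcal{F}_{z,c}f)(\cdot,y\cdot)\big]$, which is the claim. For the inversion formula I would invert the roles of the variables: starting from $\mathcal{F}_x(Tf)(\xi,y)=(\mathcal{F}_{z,c}f)(\xi,y\xi)$ and substituting $y=\eta/\xi$ (for $\xi\neq0$) gives $(\mathcal{F}_{z,c}f)(\xi,\eta)=\mathcal{F}_x(Tf)(\xi,\eta/\xi)$ for a.e.\ $(\xi,\eta)$, since the line $\xi=0$ is null. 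Applying the two-dimensional inverse transform and using $f=\mathcal{F}_{\xi,\eta}^{-1}\mathcal{F}_{z,c}f$ then produces the stated expression for $f(z,c)$ a.e.

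The main obstacle is purely one of rigour: none of the intermediate transforms $\mathcal{F}_{z,c}f$ or $\xi\mapsto(\mathcal{F}_{z,c}f)(\xi,y\xi)$ is a priori in $L^1$ (only $f$ and each slice $Tf(\cdot,y)$ are), so the two inversion steps must be interpreted in the tempered-distribution / a.e.\ sense rather than as absolutely convergent integrals, and one must check that the harmless singularity of the substitution $\eta=y\xi$ at $\xi=0$ does not affect the a.e.\ statement. The substantive content, by contrast, is the single Fubini-plus-change-of-variables computation above, which is routine.
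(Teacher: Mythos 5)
Your proposal is correct and follows essentially the same route as the paper: both reduce everything to the identity $\mathcal{F}_x[Tf(\cdot,y)](\xi)=(\mathcal{F}_{z,c}f)(\xi,y\xi)$, obtained by Fubini and the change of variables $z=x-cy$ after noting $Tf\in L^\infty_y(L^1_x)$, and then read off the two displayed formulas by Fourier inversion and the substitution $\eta=y\xi$. Your additional remarks on interpreting the inversion steps in the tempered-distribution sense only make explicit what the paper leaves implicit.
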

\begin{proof}
First of all, since $f\in L^1(\real^2)$,
one has $Tf\in L_y^\infty(L_x^1)$. For a.e. $y\in\real$, one may then take the Fourier transform of $Tf(\cdot,y)$:
$$
\mathcal{F}_x[Tf(\cdot,y)](\xi)=\int f(x-cy,c)e^{-2\pi i x\xi} dxdc = \int f(z,c)e^{-2\pi iz\xi-2\pi i cy\xi}dzdc = (\mathcal{F}_{z,c}f)(\xi,y\xi).
$$
\end{proof}

\begin{prop}
Take $f\in L^1_c(L^2_z)\cap \mathcal{F}(H^1(\real^2))$. For each $y\in \real$, define $\Gamma_y=\{(\xi,y\xi): \xi \in \real\}$ and let
$$
\pi_y: H^1(\real^2) \mapsto L^2(\Gamma_y)
$$
be the usual trace operator on $\Gamma_y$. Moreover, consider the isomorphism
$$
j: L^2(\Gamma_y)\mapsto L^2(\real)
$$
$$
j(f)(\xi) = f(\xi,y\xi) \quad \xi\in\real.
$$
and $\Pi_y=j\circ \pi_y$. Then, for a.e. $y\in\real$,
$$
\mathcal{F}_x[Tf(\cdot,y)] = \Pi_y\mathcal{F}_{z,c}f.
$$
\end{prop}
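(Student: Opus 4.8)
The plan is to reduce the claimed identity to the pointwise formula of Proposition \ref{propformula} by a density argument, after first checking that both sides are meaningful. The left-hand side is well-defined because, by the second estimate of Proposition \ref{integrabilidadeLp} with $p=2$, $f\in L^1_c(L^2_z)$ forces $Tf\in L^\infty_y(L^2_x)$; hence $Tf(\cdot,y)\in L^2_x$ for a.e.\ $y$, and its $x$-Fourier transform lives in $L^2_\xi$. For the right-hand side, I would first note that $\mathcal{F}_{z,c}f\in H^1(\real^2)$: writing $f=\mathcal{F}g$ with $g\in H^1(\real^2)$ and using $\mathcal{F}^2=R$ (the reflection $Rg(z,c)=g(-z,-c)$, valid for the normalization with kernel $e^{-2\pi i x\xi}$ employed in Proposition \ref{propformula}), one gets $\mathcal{F}_{z,c}f=Rg\in H^1$, so that the trace $\pi_y$, and hence $\Pi_y=j\circ\pi_y$, may legitimately be applied to $\mathcal{F}_{z,c}f$. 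The same computation shows that $\mathcal{F}_{z,c}$ is an isometry from $\mathcal{F}(H^1)$ onto $H^1(\real^2)$, since $\|\mathcal{F}_{z,c}h\|_{H^1}^2=\int(1+|w|^2)|Rh(w)|^2\,dw=\int(1+|\xi|^2)|h(\xi)|^2\,d\xi$.

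Next I would establish the identity for the dense subclass $f\in L^1(\real^2)\cap\mathcal{F}(H^1)$. For such $f$, $\mathcal{F}_{z,c}f$ is continuous (Riemann--Lebesgue) and lies in $H^1$; approximating it by its mollifications, which converge both uniformly on compacts and in $H^1$, one checks that the abstract trace $\pi_y(\mathcal{F}_{z,c}f)$ coincides a.e.\ on $\Gamma_y$ with the pointwise restriction $(\mathcal{F}_{z,c}f)|_{\Gamma_y}$ (the $H^1$-convergence identifies the limit of the restrictions as $\pi_y$, while the uniform convergence identifies it as the pointwise restriction). Combining this with Proposition \ref{propformula}, which gives $\mathcal{F}_x[Tf(\cdot,y)](\xi)=(\mathcal{F}_{z,c}f)(\xi,y\xi)$ for a.e.\ $y,\xi$, yields $\mathcal{F}_x[Tf(\cdot,y)]=\Pi_y\mathcal{F}_{z,c}f$ for a.e.\ $y$.

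To reach a general $f\in L^1_c(L^2_z)\cap\mathcal{F}(H^1)$, I would truncate, setting $f_R=f\,\mathds{1}_{\{|z|<R,\,|c|<R\}}$ for $R\in\nat$. Each $f_R$ has compact support and, since $\mathcal{F}(H^1)\subset L^2(\real^2)$, belongs to $L^2$; a compactly supported $L^2$ function is integrable, so $f_R\in L^1(\real^2)\cap\mathcal{F}(H^1)$, and by dominated convergence $f_R\to f$ simultaneously in $L^1_c(L^2_z)$ and in $\mathcal{F}(H^1)$. The previous step applies to each $f_R$, producing a full-measure set $G_R\subset\real$ on which the identity holds; on $G=\bigcap_R G_R$, also of full measure, I can pass to the limit for every fixed $y\in G$. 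On the left, $f_R\to f$ in $L^1_c(L^2_z)$ gives $Tf_R\to Tf$ in $L^\infty_y(L^2_x)$ by Proposition \ref{integrabilidadeLp}, whence $\mathcal{F}_x[Tf_R(\cdot,y)]\to\mathcal{F}_x[Tf(\cdot,y)]$ in $L^2_\xi$; on the right, $\mathcal{F}_{z,c}f_R\to\mathcal{F}_{z,c}f$ in $H^1$ by the isometry above, and the continuity of $\Pi_y$ for each fixed $y$ gives $\Pi_y\mathcal{F}_{z,c}f_R\to\Pi_y\mathcal{F}_{z,c}f$ in $L^2_\xi$. Since the two sides agree for each $f_R$, they agree in the limit, proving the identity for a.e.\ $y$.

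The main obstacle I anticipate is reconciling the two a priori different notions of ``restriction to the line $\Gamma_y$'': the pointwise evaluation at $(\xi,y\xi)$ supplied by Proposition \ref{propformula}, valid only for integrable $f$, and the abstract Sobolev trace $\Pi_y$ demanded by the statement. Matching them forces one to work with the continuous representative of $\mathcal{F}_{z,c}f$ and to arrange an approximation converging in the $L^1_c(L^2_z)$ and $\mathcal{F}(H^1)$ topologies at once; the key observation that makes this possible is that $\mathcal{F}(H^1)$ is merely a weighted $L^2$ space, so sharp truncation is harmless for its norm. The remaining bookkeeping---restricting to the countable family $\{f_R\}_{R\in\nat}$ so the exceptional null sets in $y$ can be unioned away---is routine.
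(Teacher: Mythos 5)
Your proof is correct and follows essentially the same route as the paper: a density argument that reduces the identity to the pointwise formula of Proposition \ref{propformula} on a dense class and passes to the limit using the continuity of $T:L^1_c(L^2_z)\to L^\infty_y(L^2_x)$ and of the trace operator. The only difference is cosmetic — the paper approximates by Schwartz functions while you use sharp truncations (and you spell out the identification of the abstract trace with pointwise evaluation, a detail the paper leaves implicit).
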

\begin{proof}
Take $f_n\to f$ in $L^1_c(L^2_z)\cap \mathcal{F}(H^1(\real^2))$, $f_n\in S(\real^2)$. In particular,
$$
Tf_n\to Tf \mbox{ in } L^\infty_y(L^2_x).
$$
Hence, for a.e $y\in \real^2$, using the previous result,
$$
\mathcal{F}_x[Tf(\cdot,y)] = \lim \mathcal{F}_x[Tf_n(\cdot,y)] = \lim \Pi_y\mathcal{F}_{z,c} f_n = \Pi_y\mathcal{F}_{z,c} f.
$$
\end{proof}


\begin{cor}
[Transform of the product of two functions] For $f,g\in C_0(\real^2)$,
$$
T(fg)(x,y)=\int_\real \left(T\left(e^{-2\pi i kc}f\right)(x,y)\right)\left(T\left(e^{2\pi i kc}g\right)(x,y)\right)dk.
$$
\end{cor}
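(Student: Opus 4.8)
The plan is to fix the point $(x,y)\in\real^2$ and reduce the whole identity to a statement about one-dimensional Fourier transforms along the line $z=x-cy$. Concretely, for fixed $(x,y)$ I would introduce the one-variable \emph{slices}
$$
\phi(c)=f(x-cy,c),\qquad \psi(c)=g(x-cy,c),
$$
which, since $f,g\in C_0(\real^2)$ have compact support, are themselves compactly supported continuous functions of $c$, hence in $L^1(\real)\cap L^2(\real)$. With this notation the left-hand side is immediate from the definition of $T$, namely $T(fg)(x,y)=\int \phi(c)\psi(c)\,dc$. The key observation is that the two inner transforms appearing on the right-hand side are exactly the Fourier and inverse Fourier transforms of these slices: by definition of $T$,
$$
T\!\left(e^{-2\pi i kc}f\right)(x,y)=\int e^{-2\pi i kc}\phi(c)\,dc=(\mathcal{F}_c\phi)(k),
$$
$$
T\!\left(e^{2\pi i kc}g\right)(x,y)=\int e^{2\pi i kc}\psi(c)\,dc=(\mathcal{F}_c^{-1}\psi)(k).
$$

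Having made this reduction, the claimed formula becomes precisely Parseval's identity. I would rewrite the inverse transform as a conjugate of a direct transform, using $(\mathcal{F}_c^{-1}\psi)(k)=\overline{(\mathcal{F}_c\bar\psi)(k)}$, so that the right-hand side reads
$$
\int_\real (\mathcal{F}_c\phi)(k)\,(\mathcal{F}_c^{-1}\psi)(k)\,dk=\int_\real (\mathcal{F}_c\phi)(k)\,\overline{(\mathcal{F}_c\bar\psi)(k)}\,dk=\langle \mathcal{F}_c\phi,\mathcal{F}_c\bar\psi\rangle_{L^2}.
$$
By the Plancherel theorem this $L^2$ inner product equals $\langle\phi,\bar\psi\rangle_{L^2}=\int \phi(c)\,\overline{\bar\psi(c)}\,dc=\int \phi(c)\psi(c)\,dc$, which is exactly $T(fg)(x,y)$. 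This closes the argument for each fixed $(x,y)$.

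The main technical point to be careful about is the application of Plancherel with the correct placement of complex conjugates, together with verifying that all the objects live in the right spaces: since $\phi$ and $\psi$ are in $L^2(\real)$, their transforms are in $L^2(\real)$ and the $k$-integral on the right converges absolutely by Cauchy--Schwarz, so no Fubini-type interchange is needed (we work pointwise in $(x,y)$). I would note that the heuristic behind the statement is the formal computation $\int_\real e^{-2\pi i k(c-c')}\,dk=\delta(c-c')$, which collapses the double $c,c'$ integral back to the diagonal $c=c'$; the Parseval argument above is precisely the rigorous incarnation of this distributional identity. If one prefers to stay within $C_0(\real^2)$ in a weaker sense (functions merely vanishing at infinity, for which the slices need not be square integrable), I would first establish the identity for compactly supported $f,g$ as above and then pass to the limit using the continuity estimates of Proposition~\ref{integrabilidadeLp}; but for the stated hypothesis the direct Plancherel computation suffices.
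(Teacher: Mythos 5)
Your argument is correct, and it takes a genuinely different route from the paper. The paper proves this corollary on the Fourier side: it invokes Proposition \ref{propformula} to write $T(fg)(\cdot,y)$ as $\mathcal{F}_\xi^{-1}$ of the restriction of $\mathcal{F}_{z,c}f\ast\mathcal{F}_{z,c}g$ to the line $\eta=y\xi$, then performs a change of variables in the two-dimensional convolution integral to peel off the modulation parameter $k$, and finally pulls $\mathcal{F}_\xi^{-1}$ back inside the $k$-integral. You instead work entirely on the physical side: for fixed $(x,y)$ you restrict everything to the line $c\mapsto(x-cy,c)$, observe that the two inner transforms are exactly $\mathcal{F}_c\phi$ and $\mathcal{F}_c^{-1}\psi$ of the slices $\phi(c)=f(x-cy,c)$, $\psi(c)=g(x-cy,c)$, and conclude by one-dimensional Plancherel (with the conjugation bookkeeping handled correctly). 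Your computation is verifiably right: the left-hand side is $\int\phi\psi\,dc$, and $\int(\mathcal{F}_c\phi)(k)\,\overline{(\mathcal{F}_c\bar\psi)(k)}\,dk=\int\phi\,\overline{\bar\psi}\,dc=\int\phi\psi\,dc$. Your approach is more elementary, avoids the two-dimensional Fourier representation of $T$ altogether, and makes the absolute convergence of the $k$-integral transparent via Cauchy--Schwarz; the paper's approach, while heavier, fits its surrounding development in which $T$ is systematically studied through $\mathcal{F}_{z,c}$. The only point to settle is the meaning of $C_0(\real^2)$: your slices lie in $L^1\cap L^2$ because you assume compact support, and you correctly flag that for functions merely vanishing at infinity one would need an additional density or limiting argument. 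This is a reasonable reading of the paper's (somewhat loose) use of $C_0$, and your remark about the density fallback covers the other reading.
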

\begin{proof}
The result is a simple application of Proposition \ref{formula}. First of all, notice that
\begin{align*}
(\mathcal{F}_{z,c}f \ast \mathcal{F}_{z,c}g )(\xi, y\xi) &= \int (\mathcal{F}_{z,c}f)(l,k')(\mathcal{F}_{z,c}g)(\xi-l,y\xi-k')dldk' \\&= \int (\mathcal{F}_{z,c}f)(l,yl+k)(\mathcal{F}_{z,c}g)(\xi-l,y(\xi-l)-k)dldk \\&=\int (\mathcal{F}_{z,c}e^{-2\pi i ck}f)(l,yl)(\mathcal{F}_{z,c}e^{2\pi i ck}g)(\xi-l,y(\xi-l))dldk\\& = \int \left((\mathcal{F}_{z,c}e^{-2\pi i ck}f)(\cdot,y\cdot)\ast(\mathcal{F}_{z,c}e^{2\pi i ck}g)(\cdot,y\cdot)\right)(\xi)dk.
\end{align*}
Hence
\begin{align*}
T(fg)(\cdot,y) &= \mathcal{F}_\xi^{-1}\left((\mathcal{F}_{z,c}f \ast \mathcal{F}_{z,c}g )(\xi, y\xi)\right)\\&= \mathcal{F}_\xi^{-1}\left(\int \left((\mathcal{F}_{z,c}e^{-2\pi i ck}f)(\cdot,y\cdot)\ast(\mathcal{F}_{z,c}e^{2\pi i ck}g)(\cdot,y\cdot)\right)(\xi)dk\right)\\&= \int  \mathcal{F}_\xi^{-1}\left((\mathcal{F}_{z,c}e^{-2\pi i ck}f)(\cdot,y\cdot)\right)\mathcal{F}_\xi^{-1}\left((\mathcal{F}_{z,c}e^{2\pi i ck}g)(\cdot,y\cdot)\right)dk \\&= \int T\left(e^{-2\pi i ck}f\right)(\cdot, y)T\left(e^{2\pi i ck}g\right)(\cdot, y) dk.
\end{align*}
\end{proof}

\begin{prop}
[Parseval's identity for the plane wave transform]\label{parseval} For any $p\ge 1$, if $f\in L_c^1(L^p_z)$ and $g\in L^1_c(L^{p'}_z)$, one has
$$
\int (Tf)(x,y)g(x,y) dxdy = \int f(z,c)(Tg)(z,-c)dzdc.
$$
\end{prop}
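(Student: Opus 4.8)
The plan is to reduce the identity to a single application of Fubini's theorem together with one change of variables, and then to upgrade from smooth compactly supported data to the general case by a density argument.

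First I would establish the identity for $f,g\in C_0(\real^2)$, where every integral in sight is absolutely convergent (compact support) and Fubini applies without comment. Inserting the definition of $T$ into the left-hand side gives
$$
\int (Tf)(x,y)\,g(x,y)\,dx\,dy=\int\int\int f(x-cy,c)\,g(x,y)\,dc\,dx\,dy.
$$
For fixed $c$ and $y$ I substitute $z=x-cy$ (so $dx=dz$), turning the innermost integral into $\int f(z,c)\,g(z+cy,y)\,dz$. Interchanging the order of integration and relabelling the variable $y$ as $c'$, I obtain
$$
\int\int\int f(z,c)\,g(z+cc',c')\,dc'\,dz\,dc=\int f(z,c)\left(\int g\big(z-c'(-c),c'\big)\,dc'\right)dz\,dc,
$$
and the inner parenthesis is precisely $(Tg)(z,-c)$ by the definition of $T$ (with physical time $-c$ and speed variable $c'$). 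This proves the identity on $C_0(\real^2)\times C_0(\real^2)$.

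To pass to general $f\in L^1_c(L^p_z)$ and $g\in L^1_c(L^{p'}_z)$, I would show that both sides are bounded bilinear forms in $(f,g)$ and invoke density. By Hölder in the first variable and Proposition \ref{integrabilidadeLp},
$$
\left|\int (Tf)\,g\right|\le\int \|Tf(\cdot,y)\|_{L^p_x}\,\|g(\cdot,y)\|_{L^{p'}}\,dy\le \|Tf\|_{L^\infty_y(L^p_x)}\,\|g\|_{L^1_c(L^{p'}_z)}\le \|f\|_{L^1_c(L^p_z)}\,\|g\|_{L^1_c(L^{p'}_z)},
$$
and symmetrically, since $\|(Tg)(\cdot,-c)\|_{L^{p'}_z}\le \|Tg\|_{L^\infty_y(L^{p'}_x)}\le \|g\|_{L^1_c(L^{p'}_z)}$,
$$
\left|\int f(z,c)\,(Tg)(z,-c)\,dz\,dc\right|\le \|f\|_{L^1_c(L^p_z)}\,\|Tg\|_{L^\infty_y(L^{p'}_x)}\le \|f\|_{L^1_c(L^p_z)}\,\|g\|_{L^1_c(L^{p'}_z)}.
$$
Since $C_0(\real^2)$ is dense in both $L^1_c(L^p_z)$ and $L^1_c(L^{p'}_z)$, and $T$ is continuous into $L^\infty_y(L^p_x)$, respectively $L^\infty_y(L^{p'}_x)$ (Proposition \ref{integrabilidadeLp}), the two forms are continuous and agree on a dense set, hence coincide, which yields the claim.

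The computational core is essentially trivial; the delicate point I anticipate is the extension step. A direct change of variables for general $f$ would require identifying the continuously extended operator $Tf$ with its a.e. pointwise integral formula, which is not transparent for $p>1$ (the map $c\mapsto f(x-cy,c)$ need not be integrable pointwise). The density route sidesteps this entirely, so the real work is only to verify the two uniform bilinear bounds above, which I expect to follow cleanly from Proposition \ref{integrabilidadeLp} and Hölder's inequality.
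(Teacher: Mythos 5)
Your proposal is correct and follows essentially the same route as the paper: the identical Fubini-plus-change-of-variables computation on $C_0(\real^2)$ (the paper writes it as $\int f(x-cy,c)g(x,y)\,dxdydc=\int f(z,c)g(z+cy,y)\,dydcdz$), followed by a density argument that the paper merely asserts and you spell out via the two bilinear bounds from Proposition \ref{integrabilidadeLp}. Your explicit justification of the extension step is a welcome addition but not a different method.
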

\begin{proof}
If $f,g\in C_0(\real^2)$, the result follows from
\begin{align*}
\int (Tf)(x,y)g(x,y) dxdy &= \int f(x-cy,c)g(x,y)dxdydc = \int f(z,c)g(z+cy,y) dydcdz \\&= \int f(z,c)(Tg)(z,-c)dzdc.
\end{align*}
The general case is obtained by a density argument.
\end{proof}

\begin{cor}\label{unicarepresentacao}
If $f\in L^1_c(L^2_z)$ is such that $Tf\equiv0$, then $f\equiv 0$.
\end{cor}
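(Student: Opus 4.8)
The plan is to show that the hypothesis $Tf\equiv 0$ forces the partial Fourier transform of $f$ in the $z$ variable to vanish, by adapting the computation carried out in the proof of Proposition \ref{propformula}. The one subtlety is that $f\in L^1_c(L^2_z)$ need not lie in $L^1(\real^2)$, so I cannot apply the inversion formula of Proposition \ref{propformula} verbatim and must work with the partial transform only. First I would record that $T$ is well-behaved in the relevant topology: by Proposition \ref{integrabilidadeLp} with $p=2$ one has $Tf\in L^\infty_y(L^2_x)$, so $Tf(\cdot,y)\in L^2_x$ for a.e.\ $y$ and $\mathcal{F}_x[Tf(\cdot,y)]$ is meaningful. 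Approximating $f$ by functions in $C_0(\real^2)$ in the $L^1_c(L^2_z)$ norm (so that $Tf_n\to Tf$ in $L^\infty_y(L^2_x)$ and hence $\mathcal{F}_x[Tf_n(\cdot,y)]\to\mathcal{F}_x[Tf(\cdot,y)]$ in $L^2_\xi$ for a.e.\ $y$), the computation from the proof of Proposition \ref{propformula}, but transforming only in $z$, passes to the limit and yields, for a.e.\ $y$,
$$
\mathcal{F}_x[Tf(\cdot,y)](\xi)=\int e^{-2\pi i c y\xi}\,F(\xi,c)\,dc,\qquad F(\xi,c):=\mathcal{F}_z[f(\cdot,c)](\xi).
$$
Here Plancherel gives $\|F(\cdot,c)\|_{L^2_\xi}=\|f(\cdot,c)\|_{L^2_z}$, so $F\in L^1_c(L^2_\xi)$.

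Assuming now $Tf\equiv 0$, the left-hand side vanishes, so the integral above is zero for a.e.\ $(\xi,y)$. To interpret this as a one-dimensional Fourier transform in $c$, I would invoke Minkowski's integral inequality: from $F\in L^1_c(L^2_\xi)$ it follows that $\xi\mapsto\int|F(\xi,c)|\,dc$ lies in $L^2_\xi$, hence is finite for a.e.\ $\xi$, so $F(\xi,\cdot)\in L^1_c$ for a.e.\ $\xi$ and the displayed integral equals $\mathcal{F}_c[F(\xi,\cdot)](y\xi)$. Fixing such a $\xi\neq 0$ and letting $y$ range over $\real$, the argument $y\xi$ sweeps out all of $\real$, so $\mathcal{F}_c[F(\xi,\cdot)]=0$ a.e.; injectivity of the one-dimensional Fourier transform on $L^1$ then gives $F(\xi,\cdot)\equiv 0$ in $L^1_c$.

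Finally, by Fubini this produces $F(\xi,c)=0$ for a.e.\ $(\xi,c)$, i.e.\ $\mathcal{F}_z[f(\cdot,c)]=0$ in $L^2_\xi$ for a.e.\ $c$, and Plancherel in $z$ gives $f(\cdot,c)=0$ in $L^2_z$ for a.e.\ $c$, that is $f\equiv 0$. The step I expect to be the main obstacle is the simultaneous handling of the a.e.\ statements in the two variables: one must use Fubini to pass from ``for a.e.\ $y$, $\mathcal{F}_x[Tf(\cdot,y)]=0$ a.e.\ in $\xi$'' to ``for a.e.\ $\xi$, the identity holds for a.e.\ $y$'', and combine this with the Minkowski estimate guaranteeing $F(\xi,\cdot)\in L^1_c$ on a full-measure set of $\xi$, so that the graph-covering argument (the lines $\{(\xi,y\xi)\}$ filling the plane) and the injectivity of $\mathcal{F}_c$ genuinely apply. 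I note that if one knew a priori that $f\in L^1(\real^2)$, the inversion formula of Proposition \ref{propformula} would give the conclusion immediately; the partial-transform argument above is precisely what replaces it under the weaker hypothesis $f\in L^1_c(L^2_z)$.
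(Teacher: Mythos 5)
Your argument is correct, but it is not the route the paper takes. The paper proves the corollary by duality: it fixes a Schwartz test function $h(z,c)$, removes its low $z$-frequencies to get $h_\epsilon$ (so that $g_\epsilon(\xi,\eta)=(\mathcal{F}_z h_\epsilon)(\xi,\eta/\xi)$ is Schwartz), observes via Proposition \ref{propformula} that $h_\epsilon = T(\mathcal{F}_{\xi,\eta}^{-1}g_\epsilon)$, and then uses the Parseval identity of Proposition \ref{parseval} together with $Tf\equiv 0$ to conclude $\int f h_\epsilon = 0$ and, letting $\epsilon\to 0$, $\int f h = 0$ for all Schwartz $h$. You instead work directly with the partial Fourier transform: you extend the identity $\mathcal{F}_x[Tf(\cdot,y)](\xi)=\int e^{-2\pi i cy\xi}F(\xi,c)\,dc$, $F=\mathcal{F}_z f$, from $C_0$ to $L^1_c(L^2_z)$ by density, use Minkowski to make sense of the $c$-integral pointwise in $\xi$, and then exploit that the lines $\{(\xi,y\xi)\}$ sweep the plane to reduce to injectivity of the one-dimensional Fourier transform on $L^1_c$. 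Both proofs rest on the same structural fact (Proposition \ref{propformula}: $T$ is, on the Fourier side, restriction to lines through the origin), but your version is more self-contained — it needs neither the Parseval identity nor the construction of approximate preimages of test functions — at the cost of the Fubini/measurability bookkeeping you correctly flag as the delicate point. The paper's dualization sidesteps exactly that bookkeeping (one never has to give pointwise meaning to $\mathcal{F}_{z,c}f$ for $f\notin L^1(\real^2)$), and its frequency-truncation device $h\mapsto h_\epsilon$ is reused later (in the proof of Theorem \ref{grandesdados}), which is presumably why the authors chose it. Your density step ($C_0$ dense in $L^1_c(L^2_z)$, hence $Tf_n\to Tf$ in $L^\infty_y(L^2_x)$ by Proposition \ref{integrabilidadeLp}) and the a.e.\ juggling via Fubini are all sound, so I see no gap.
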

\begin{proof}
Fix $h\in \mathcal{S}(\real^2)$, $h=h(z,c)$. Let $\psi\in C^\infty(\real)$ be such that $\psi\equiv 1$ in $[-1,1]$ and $\psi\equiv 0$ in $\real\setminus[-2,2]$. For any $\epsilon>0$, define
$$
\psi_\epsilon(\xi)=\psi\left(\frac{\xi}{\epsilon}\right),\quad h_\epsilon=h-\left(\mathcal{F}_{\xi}^{-1}\psi_\epsilon\right)\star_z h.
$$
It is clear that $\mathcal{F}_zh_\epsilon\in S(\real^2)$ has support outside the strip $\{|\xi|<\epsilon\}$. Furthermore,
$$
\|h-h_\epsilon\|_{L^\infty_c(L^2_z)}=\|\psi_\epsilon\mathcal{F}_z h\|_{L^\infty_c(L^2_z)}\to 0,\quad \epsilon\to 0.
$$
Setting
$$
g_\epsilon(\xi,\eta):=\left(\mathcal{F}_zh_\epsilon\right)\left(\xi,\frac{\eta}{\xi}\right),
$$
it follows that $g_\epsilon\in \mathcal{S}(\real^2)$ and so $\mathcal{F}_{\xi,\eta}^{-1}g_\epsilon\in \mathcal{S}(\real^2)$. By Proposition \ref{formula}, $T(\mathcal{F}_{\xi,\eta}^{-1}g_\epsilon)=h_\epsilon$. Using Proposition \ref{parseval},
\begin{align*}
\int f(z,c)h(z,c)dzdc &= \lim \int f(z,c)h_\epsilon(z,c)dzdc = \lim \int f(z,c)\left(T(\mathcal{F}_{\xi,\eta}^{-1}g_\epsilon)\right)(z,c)dzdc \\&= \lim \int Tf(z,-c)(\mathcal{F}_{\xi,\eta}^{-1}g_\epsilon)(z,c)dzdc = 0
\end{align*}
and so $f\equiv 0$.
\end{proof}

\begin{cor}[$L^2$ integrability]\label{integrabilidadeL2}
Take $f\in L^1(\real^2)$. Then
$$
\|Tf\|_{L^2(\real^2)}^2=\int \frac{1}{|\xi|}\left|\mathcal{F}_z f\right|^2(\xi, c)d\xi dc = \int \frac{1}{|\xi|}\left|\mathcal{F}_{z,c} f\right|^2(\xi, \eta)d\xi d\eta.
$$
Consequently, if the spectrum of $f$ lies in $(]-M,-\epsilon[\cup ]\epsilon,M[)\times ]-M,M[$, for some $M,\epsilon>0$, $Tf\in L^2(\real^2)$.
\end{cor}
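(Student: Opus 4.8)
The plan is to combine the pointwise–in–$y$ identity coming from Proposition \ref{propformula} with Plancherel's theorem and a single change of variables. First I would fix $y\in\real$: since $f\in L^1(\real^2)$ gives $Tf(\cdot,y)\in L^1(\real)$ for a.e.\ $y$, the computation in the proof of Proposition \ref{propformula} (formula \eqref{formula}) shows that the one–dimensional Fourier transform of $Tf(\cdot,y)$ is the bounded continuous function $\xi\mapsto(\mathcal{F}_{z,c}f)(\xi,y\xi)$. The key elementary fact I would invoke is that, for any $g\in L^1(\real)$, the identity $\|g\|_{L^2}^2=\|\mathcal{F}g\|_{L^2}^2$ holds in the extended sense in $[0,\infty]$: if $\mathcal{F}g\in L^2$ then, by uniqueness of the Fourier transform, $g$ agrees a.e.\ with the $L^2$-inverse transform of $\mathcal{F}g$, and otherwise both sides are $+\infty$. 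Applying this with $g=Tf(\cdot,y)$ yields, for a.e.\ $y$,
\begin{equation*}
\int_\real |Tf(x,y)|^2\,dx=\int_\real |(\mathcal{F}_{z,c}f)(\xi,y\xi)|^2\,d\xi\in[0,\infty].
\end{equation*}

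Next I would integrate this over $y$. Working throughout with nonnegative integrands, Tonelli's theorem gives $\|Tf\|_{L^2(\real^2)}^2=\int_\real\int_\real|(\mathcal{F}_{z,c}f)(\xi,y\xi)|^2\,d\xi\,dy$ and permits swapping the order of integration. For each fixed $\xi\neq 0$ the substitution $\eta=y\xi$, with $dy=d\eta/|\xi|$, turns $\int_\real|(\mathcal{F}_{z,c}f)(\xi,y\xi)|^2\,dy$ into $\frac{1}{|\xi|}\int_\real|(\mathcal{F}_{z,c}f)(\xi,\eta)|^2\,d\eta$, which is precisely the second claimed expression. To obtain the first, I would note that for every $\xi$ the slice $c\mapsto(\mathcal{F}_z f)(\xi,c)$ lies in $L^1_c$ (indeed $\|(\mathcal{F}_z f)(\xi,\cdot)\|_{L^1_c}\le\|f\|_{L^1}$), so the same extended Plancherel identity, now in the $c\mapsto\eta$ variable and using $\mathcal{F}_{z,c}f=\mathcal{F}_c[\mathcal{F}_z f]$, gives $\int_\real|(\mathcal{F}_{z,c}f)(\xi,\eta)|^2\,d\eta=\int_\real|(\mathcal{F}_z f)(\xi,c)|^2\,dc$ for a.e.\ $\xi$; inserting this into the previous line produces the first expression.

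Finally, for the stated consequence I would use that $f\in L^1(\real^2)$ forces $\mathcal{F}_{z,c}f\in L^\infty(\real^2)$, so if the spectrum of $f$ is contained in the bounded set $(]-M,-\epsilon[\cup]\epsilon,M[)\times]-M,M[$, then $\mathcal{F}_{z,c}f\in L^2(\real^2)$ and $|\xi|\ge\epsilon$ on its support; the identity then yields $\|Tf\|_{L^2}^2\le\epsilon^{-1}\|\mathcal{F}_{z,c}f\|_{L^2}^2<\infty$. The one point demanding genuine care is the use of Plancherel in the extended $[0,\infty]$ sense together with the Tonelli swaps, since a priori one does not know that $Tf(\cdot,y)\in L^2_x$ nor that the final integral is finite; phrasing every step with nonnegative integrands and relying on the $L^1$-to-$L^2$ uniqueness argument is exactly what makes these manipulations legitimate without assuming any integrability in advance.
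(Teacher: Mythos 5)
Your proposal is correct and follows essentially the same route as the paper, whose proof of this corollary consists of the single sentence that it is a direct consequence of the formula of Proposition \ref{propformula}: the Plancherel identity in $x$, the Tonelli swap, and the substitution $\eta=y\xi$ are exactly the computation the paper leaves implicit. Your careful handling of the extended $[0,\infty]$-valued Plancherel identity for $L^1$ functions is a welcome addition, since the corollary is stated without assuming $Tf\in L^2$ in advance.
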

\begin{proof}
This is a direct consequence of the formula for $T$ in terms of the Fourier transform.
\end{proof}
%
%

\begin{cor}
Take $f\in L^1(\real^2)$ and $g\in L^1(\real)$. Then
$$
T(f(\cdot, c) \ast g)=(Tf)(\cdot,y)\ast g.
$$
\end{cor}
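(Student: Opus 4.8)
The plan is to unwind both sides into iterated integrals and interchange the order of integration via Fubini, the essential point being that convolution with $g$ acts only on the physical variable (the first slot $z$, equivalently $x$) and therefore never interferes with the integration in the speed variable $c$ that defines $T$.

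First I would fix the notation and check well-definedness. Writing $h(z,c)=(f(\cdot,c)\ast g)(z)=\int f(z-w,c)g(w)\,dw$, Young's inequality in the $z$ variable gives $\|h(\cdot,c)\|_{L^1_z}\le \|f(\cdot,c)\|_{L^1_z}\|g\|_{L^1}$, and integrating in $c$ (Tonelli) yields $\|h\|_{L^1(\real^2)}\le \|f\|_{L^1(\real^2)}\|g\|_{L^1}$. Hence $h\in L^1(\real^2)$ and the left-hand side $Th$ is well-defined. On the right, $f\in L^1(\real^2)=L^1_c(L^1_z)$ gives $Tf\in L^\infty_y(L^1_x)$ by the bound of Proposition \ref{integrabilidadeLp}, so for a.e.\ $y$ the convolution $(Tf)(\cdot,y)\ast g$ is a well-defined $L^1_x$ function.

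The core step is the computation
$$
(Th)(x,y)=\int h(x-cy,c)\,dc=\iint f(x-cy-w,c)g(w)\,dw\,dc,
$$
where I want to swap the $c$- and $w$-integrations so as to recognize the inner $c$-integral as $(Tf)(x-w,y)$. To justify Fubini I would establish absolute integrability: integrating the modulus also in $x$ and substituting $z=x-cy-w$ (for fixed $y$, a measure-preserving shift in $x$) gives $\iiint |f(z,c)||g(w)|\,dz\,dc\,dw=\|f\|_{L^1}\|g\|_{L^1}<\infty$. Thus, for a.e.\ $y$, the map $(x,c,w)\mapsto f(x-cy-w,c)g(w)$ lies in $L^1(\real^3)$, so for a.e.\ $x$ it is integrable in $(c,w)$ and Fubini applies, yielding
$$
(Th)(x,y)=\int g(w)\Big(\int f((x-w)-cy,c)\,dc\Big)dw=\int (Tf)(x-w,y)g(w)\,dw,
$$
which is exactly $((Tf)(\cdot,y)\ast g)(x,y)$, valid for a.e.\ $(x,y)$.

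There is no deep obstacle here; the only care needed is the absolute-integrability bound legitimizing the interchange, which the shift substitution above supplies. Alternatively, one may argue on the Fourier side as in the preceding corollary: since convolution in $z$ becomes multiplication, $\mathcal{F}_{z,c}h(\xi,\eta)=\hat g(\xi)\,\mathcal{F}_{z,c}f(\xi,\eta)$, and Proposition \ref{propformula} gives $\mathcal{F}_x[(Th)(\cdot,y)](\xi)=\mathcal{F}_{z,c}h(\xi,y\xi)=\hat g(\xi)\,\mathcal{F}_{z,c}f(\xi,y\xi)=\hat g(\xi)\,\mathcal{F}_x[(Tf)(\cdot,y)](\xi)$, which is the Fourier transform of $(Tf)(\cdot,y)\ast g$; inverting in $x$ gives the claim. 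Finally, if one prefers to sidestep the measurability bookkeeping entirely, the identity can be checked first for $f,g\in C_0$ (where every integral converges absolutely and Fubini is immediate) and then extended by density, using the continuity of $T$ on $L^1_c(L^1_z)$ together with the continuity of convolution on $L^1$.
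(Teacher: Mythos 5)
Your proof is correct and follows essentially the same route as the paper: the paper's proof is exactly the direct computation $T(f(\cdot,c)\ast g)(x,y)=\int f(x-cy-z,c)g(z)\,dz\,dc=((Tf)(\cdot,y)\ast g)(x)$, with the Fubini interchange left implicit. Your added justification of absolute integrability (and the alternative Fourier/density arguments) are sound but not needed beyond what the paper records.
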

\begin{proof}
By definition,
$$
T(f(\cdot, c) \ast g)(x,y)=\int (f(\cdot, c) \ast g)(x-cy)dc = \int f(x-cy-z,c)g(z)dzdc = ((Tf)(\cdot,y)\ast g)(x).
$$
\end{proof}

\begin{cor}[Convolution through the plane wave transform]\label{convolucao}
Take $f_1,f_2\in L^1(\real)$. If $f=f_1\otimes f_2$, that is, $f(z,c)=f_1(z)f_2(c)$, then
$$
(Tf)(x,1)=(f_1\ast f_2)(x).
$$
More generally, for any $y\neq 0$, one has
$$
(Tf)(x,y)=(f_1\ast \Theta_y f_2 )(x),\quad \Theta_y f_2(\cdot)=\frac{1}{|y|}f_2(\cdot/y).
$$
and, for $y=0$, $(Tf)(x,0)=\left(\int f_2 (c)\right)f_1(x)$.
\end{cor}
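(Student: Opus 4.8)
The plan is to compute $Tf$ directly from its definition as an integral, exactly as in the two preceding corollaries, since for a tensor product $f=f_1\otimes f_2$ the defining integral collapses to a one-dimensional convolution. First I would observe that, because $f_1,f_2\in L^1(\real)$, a standard application of Fubini's theorem guarantees that $c\mapsto f_1(x-cy)f_2(c)$ is integrable for almost every $x$ (for each fixed $y$), since $\int\!\!\int |f_1(x-cy)||f_2(c)|\,dc\,dx=\|f_1\|_{L^1}\|f_2\|_{L^1}<\infty$ when $y\neq 0$. Thus
$$(Tf)(x,y)=\int f(x-cy,c)\,dc=\int f_1(x-cy)f_2(c)\,dc$$
is well-defined pointwise a.e. Taking $y=1$ then requires no further work: the right-hand side is literally $\int f_1(x-c)f_2(c)\,dc=(f_1\ast f_2)(x)$.

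For general $y\neq 0$ I would perform the substitution $w=cy$, so that $c=w/y$ and $dc=dw/y$. Keeping track of the orientation of the limits of integration — which for $y<0$ reverses them and, combined with the negative sign of $1/y$, produces $1/|y|$ in both cases — this yields
$$(Tf)(x,y)=\frac{1}{|y|}\int f_1(x-w)f_2(w/y)\,dw=\int f_1(x-w)(\Theta_yf_2)(w)\,dw=(f_1\ast\Theta_yf_2)(x),$$
which is precisely the stated identity with $\Theta_yf_2(\cdot)=\frac{1}{|y|}f_2(\cdot/y)$. Finally, for $y=0$ the factor $f_1(x-cy)=f_1(x)$ no longer depends on $c$, so the integral factors as $(Tf)(x,0)=f_1(x)\int f_2(c)\,dc$, giving the last assertion.

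The argument is entirely elementary, so there is no serious obstacle; the single point that demands care — and which I regard as the crux — is the orientation in the change of variables $w=cy$, as this is exactly what forces the factor $1/|y|$ rather than $1/y$ and hence pins down the correct normalization of $\Theta_y$. The only other thing to verify is the a.e.\ convergence of the defining integral, which is immediate from $f_1,f_2\in L^1(\real)$ via Fubini's theorem.
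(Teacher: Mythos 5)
Your proof is correct and follows essentially the same route as the paper: the identity is obtained by the change of variables $c'=cy$ in the defining integral, with the orientation of the substitution producing the factor $1/|y|$, and the cases $y=1$ and $y=0$ read off directly. The only difference is that you add an explicit Fubini justification of the a.e.\ convergence of the integral, which the paper omits as routine.
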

\begin{proof}
The result is trivial for $y=0$. Given $y\neq 0$, one has
\begin{align*}
(Tf)(x,y)&=\int f(x-cy,c) dc = \int f_1(x-cy)f_2(c)dc = \int \frac{1}{|y|}f_1(x-c')f_2\left(\frac{c'}{y}\right)dc' \\&= (f_1\ast \Theta_y f_2 )(x).
\end{align*}
\begin{nota}
Corollary \ref{convolucao} and Proposition \ref{integrabilidadeLp} give a new proof of Young's inequality:
$$
\|f_1\ast f_2\|_{L^p} \le \|Tf\|_{L^\infty_y(L^p_x)}\le \|f\|_{L^1_c(L^p_z)}=\|f_1\|_{L^p}\|f_2\|_{L^1}.
$$
Moreover, since, for a.e. $y\in\real$,
\begin{align*}
\left|\int (Tf(x,y)-Tf(x,0))\psi(x)dx\right| &= \left|\int (f(x,c)-f(x-cy,c))\psi(x)dcdx\right| \\&\le \int |f_2(c)|\|f_1(\cdot) - f_1(\cdot -cy)\|_{L^p}\|\psi\|_{L^{p'}}dc,
\end{align*}
when $y\to 0$, by Dominated Convergence Theorem, 
$$
\|Tf(x,y)-Tf(x,0)\|_{L^p_x}\le  \int |f_2(c)|\|f_1(\cdot) - f_1(\cdot -cy)\|_{L^p} dc \to 0
$$
which is a new way to prove convergence of mollifiers.
\end{nota}

\end{proof}

\section{Solving some linear PDE's using the plane wave transform}\label{resolverequacoes}

In this short section, we apply the plane wave transform to solve some classical equations.

\begin{exemplo}[The wave equation]
Consider the wave equation in $\real^2$
$$
u_{tt} - u_{xx} - u_{yy}=0.
$$
If one supposes that $u(x,y,t)=f(t,x-cy)$, one arrives to
$$
f_{tt} - (1+c^2)f_{zz} = 0,
$$
which may be explicitly solved:
$$
f(t,z,c)= \frac{f_0(z-\sqrt{1+c^2}t,c) + f_0(z+\sqrt{1+c^2}t,c)}{2} + \frac{1}{2\sqrt{1+c^2}}\int_{z-\sqrt{1+c^2}t}^{z+\sqrt{1+c^2}t} f_1(s,c) ds,
$$
Notice that we introduced on purpose the speed $c$ as an independent variable of $f$. Now, taking the transform of $f(t,\cdot)$, 
\begin{align*}
(Tf)(t,x,y)=\int &\frac{f_0(x-cy-\sqrt{1+c^2}t,c) + f_0(x-cy+\sqrt{1+c^2}t,c)}{2} \\&+\frac{1}{2\sqrt{1+c^2}}\left(\int_{x-cy-\sqrt{1+c^2}t}^{x-cy+\sqrt{1+c^2}t} f_1(s,c) ds\right)dc,
\end{align*}
one obtains a solution of the 2D wave equation. We remark that, unlike Poisson's formula for classical solutions, this representation does not involve derivatives of $f_1$.
\end{exemplo}

\begin{exemplo}[The Schrödinger equation]
Take the linear Schrödinger equation:
$$
iu_t + u_{xx} + u_{yy}=0.
$$
Introducing the plane wave ansatz $u(x,y,t)=f(t,x-cy)$, 
$$
if_t + (1+c^2)f_{zz} = 0.
$$
The solution is given by
$$
f(t,z,c)=\frac{1}{\sqrt{4i\pi (1+ c^2)t}}\int e^{\frac{i|z-w|^2}{4(1+ c^2)t}}f_0(w,c)dw
$$
which, through the plane wave transform, gives a family of solutions to the two dimensional Schrödinger equation.
$$
u(t,x,y)=\int\frac{1}{\sqrt{4i\pi (1+ c^2)t}} e^{\frac{i|x-cy-w|^2}{4(1+ c^2)t}}f_0(w,c)dwdc.
$$

\end{exemplo}

\begin{exemplo}[The heat equation]\label{calor}
We now consider a slightly different application of the plane wave transform. Take the heat equation in one dimension:
$$
u_t=u_{xx}.
$$
Here, instead of taking a plane wave in two spatial dimensions, we consider a plane wave in time and space. The ansatz $u(t,x)=f(x-ct)$ implies that
$$
-cf'=f'',\ i.e.,\ f(z,c)=A(c)e^{-cz} + B(c).
$$
Setting $B(c)=0$ and applying the transform $T$, one arrives to the following family of solutions to the heat equation:
$$
u(t,x)=\int A(c)e^{-cx+c^2t} dc.
$$
Actually, if one is given an initial condition $u(0,x)=u_0(x)$, then one observes that
$A$ is the inverse Laplace transform of $u_0$. Although this integral representation is certainly valid, the presence of the term $e^{-cx}$ implies a strong decay of $A$ at infinity. Now, \textit{if one replaces $c$ with $ic$ in the integral representation, one still obtains a solution to the heat equation}. This procedure makes no sense when one starts with the ansatz $u(t,x)=f(x-ct)$; however, the integral representation is still meaningful for complex values of $c$. Hence another family of solutions is
$$
u(t,x)=\int \tilde{A}(c)e^{-icx - c^2 t}dc.
$$
In this situation, one sees that $\tilde{A}$ is none other than the inverse Fourier transform of $u_0$ and so
$$
u(t,x)=\mathcal{F}_c\left(e^{-c^2t} \mathcal{F}_x^{-1}u_0\right),
$$
which is precisely the solution of the heat equation given by the Fourier transform. This integral representation was studied in \cite{chungyeom} as a generalization of the Fourier transform.
\end{exemplo}

\begin{exemplo}[The Schrödinger equation II]\label{schrodinger}
As in the previous example, one may derive a family of solutions to the 1D-Schrödinger equation using $T$ in both space and time:
$$
u(t,x)=\int A(c)e^{-icx + ic^2 t} dc.
$$
Givan an initial condition $u_0$, $A$ corresponds to the Fourier transform of $u_0$. The substitution $c\mapsto -ic$ gives the family
$$
u(t,x)= \int \tilde{A}(c)e^{-cx-ic^2t}dc
$$
which is connected to the Laplace transform.
\end{exemplo}

We finish with a result stating that the above classical arguments to reduce the dimension of the equations are valid in a functional setting. The result, although stated only for the linear Schrödinger equation, may also be extended to more equations.

\begin{prop}\label{semigrupoT}
Let $S_1$ and $S_2$ be the free Schrödinger groups in dimensions one and two, respectively. Given $f\in L^1_c(L^2_z)$, one has
$$
T(S_1((1+c^2)t)f(c)) = S_2(t)Tf \mbox{ in }\mathcal{S}'(\real^2).
$$
\end{prop}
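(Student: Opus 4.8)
The plan is to reduce everything to a \emph{per-speed} identity and then integrate over $c$. Throughout, write $g_c:=f(\cdot,c)\in L^2(\real)$ for a.e. $c$, and set $F(t)(\cdot,c):=S_1((1+c^2)t)g_c$. Since $S_1$ is an isometry on $L^2(\real)$, one has $\|F(t)(\cdot,c)\|_{L^2_z}=\|g_c\|_{L^2_z}$, so $F(t)\in L^1_c(L^2_z)$ and both $Tf$ and $TF(t)$ are well-defined elements of $L^\infty_y(L^2_x)\subset\mathcal{S}'(\real^2)$ by Proposition \ref{integrabilidadeLp}. For each fixed $c$ I introduce the single plane wave $\phi_c(t)(x,y):=\bigl(F(t)(\cdot,c)\bigr)(x-cy)$; the change of variables $z=x-cy$ gives $\langle\phi_c(t),\psi\rangle=\int g_c^{(t)}(z)\,(T\psi)(z,-c)\,dz$ (with $g_c^{(t)}:=F(t)(\cdot,c)$), whence $|\langle\phi_c(t),\psi\rangle|\le\|g_c\|_{L^2_z}\,\|(T\psi)(\cdot,-c)\|_{L^2_z}$, showing $\phi_c(t)\in\mathcal{S}'(\real^2)$.

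The core step is the per-speed identity $S_2(t)\phi_c(0)=\phi_c(t)$ in $\mathcal{S}'$, which I would prove on the Fourier side. Computing the two-dimensional Fourier transform of the plane wave via $z=x-cy$ gives $\mathcal{F}_{x,y}\phi_c(0)=\widehat{g_c}(\xi)\,\delta(\eta+c\xi)$, a measure carried by the line $\Gamma_c:=\{\eta=-c\xi\}$. The free two-dimensional propagator acts as multiplication by the smooth symbol $e^{-4\pi^2 i(\xi^2+\eta^2)t}$ (the constant being convention-dependent and irrelevant), and on $\Gamma_c$ one has $\xi^2+\eta^2=(1+c^2)\xi^2$, so restricting the symbol to $\Gamma_c$ produces the factor $e^{-4\pi^2 i(1+c^2)\xi^2 t}$. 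This is exactly the one-dimensional symbol giving $\mathcal{F}_z\bigl(S_1((1+c^2)t)g_c\bigr)(\xi)=e^{-4\pi^2 i(1+c^2)\xi^2 t}\widehat{g_c}(\xi)$, and reassembling yields $\mathcal{F}_{x,y}\phi_c(t)=e^{-4\pi^2 i(1+c^2)\xi^2 t}\widehat{g_c}(\xi)\,\delta(\eta+c\xi)=\mathcal{F}_{x,y}\bigl(S_2(t)\phi_c(0)\bigr)$.

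It then remains to integrate in $c$. For $\psi\in\mathcal{S}(\real^2)$, the definition $TF(t)=\int\phi_c(t)\,dc$ together with Fubini gives $\langle TF(t),\psi\rangle=\int_c\langle\phi_c(t),\psi\rangle\,dc$; on the other hand $\langle S_2(t)Tf,\psi\rangle=\langle Tf,S_2(t)^\top\psi\rangle=\int_c\langle\phi_c(0),S_2(t)^\top\psi\rangle\,dc=\int_c\langle\phi_c(t),\psi\rangle\,dc$, where I use that the transpose $S_2(t)^\top$ preserves $\mathcal{S}(\real^2)$ (it is again a free Schrödinger propagator) and, in the last equality, the per-speed identity of the previous paragraph. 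Both Fubini interchanges are licensed by $\int_c\|g_c\|_{L^2_z}\,\|(T\Psi)(\cdot,-c)\|_{L^2_z}\,dc<\infty$ for $\Psi\in\mathcal{S}$, which holds because $f\in L^1_c(L^2_z)$ and $\|(T\Psi)(\cdot,-c)\|_{L^2_z}$ is bounded uniformly in $c$. Since $\psi$ is arbitrary, the two distributions coincide. Alternatively, one may pair both sides with $\psi$ and invoke Proposition \ref{parseval} on each transform, reducing the problem to the dual of the per-speed identity.

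The delicate point — the main obstacle — is the rigorous manipulation in the per-speed step: $\phi_c(0)$ is not in $L^2(\real^2)$ and its Fourier transform is a Dirac mass concentrated on the line $\Gamma_c$, so one must justify that multiplication by the bounded symbol $e^{-4\pi^2 i(\xi^2+\eta^2)t}$ acts on this measure simply by restriction of the symbol to $\Gamma_c$. This is legitimate because the symbol belongs to $\mathcal{O}_M$ and multiplication by such symbols is continuous on $\mathcal{S}'$. The remaining ingredients — that each $\phi_c(t)\in\mathcal{S}'$, that $S_2(t)^\top$ maps $\mathcal{S}$ into $\mathcal{S}$, and the two Fubini interchanges — are routine bookkeeping once the per-speed identity is established. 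Conceptually, this is precisely the rigorous counterpart of the computation in Example \ref{schrodinger}, where $T\bigl(S_1((1+c^2)t)f\bigr)$ was exhibited as a family of solutions of the two-dimensional Schrödinger equation.
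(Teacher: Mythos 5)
Your proposal is correct. Both your argument and the paper's live on the Fourier side and ultimately rest on the same computational fact — that the two-dimensional Schr\"odinger symbol $e^{-4\pi^2i(\xi^2+\eta^2)t}$ restricted to the line $\{\eta=-c\xi\}$ coincides with the one-dimensional symbol of $S_1((1+c^2)t)$ — but the arguments are organized quite differently. The paper works globally: it assumes $f\in\mathcal{S}(\real^2)$, feeds the representation $(Tf)(x,y)=\mathcal{F}_\xi^{-1}[(\mathcal{F}_{z,c}f)(\cdot,y\cdot)](x)$ of Proposition \ref{propformula} into a single chain of partial Fourier transforms (the change of variables $\eta\mapsto\eta/\xi$ playing the role of your restriction to $\Gamma_c$), and then extends to $L^1_c(L^2_z)$ by density. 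You instead isolate the per-speed identity $S_2(t)\bigl[g_c(x-cy)\bigr]=\bigl(S_1((1+c^2)t)g_c\bigr)(x-cy)$, prove it via the line measure $\widehat{g_c}(\xi)\,\delta(\eta+c\xi)$ and the $\mathcal{O}_M$-multiplier argument, and then superpose over $c$ by duality with Proposition \ref{parseval}. What your route buys is conceptual transparency — it makes explicit that a single spatial plane wave evolves under the free two-dimensional flow as a one-dimensional wave with modified dispersion, which is the heuristic underlying the whole paper — and it replaces the density argument by explicit Fubini interchanges, whose justification you correctly supply (the uniform-in-$c$ bound $\|(T\Psi)(\cdot,-c)\|_{L^2_z}\le\|\Psi\|_{L^1_c(L^2_z)}$ from Proposition \ref{integrabilidadeLp}). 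The paper's route is shorter once Proposition \ref{propformula} is in hand, at the cost of a slightly opaque string of identities and an implicit continuity check for the density step.
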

\begin{proof}
Suppose that $f\in \mathcal{S}(\real^2)$. Then, from the formula of Proposition \ref{propformula},
\begin{align*}
&\mathcal{F}_{x}\left[S_2(t)Tf\right](\xi,y) = \mathcal{F}_{x}\left[ \mathcal{F}_{\xi,\eta}^{-1}\left(e^{-4\pi^2i(\xi^2+\eta^2)t}\mathcal{F}_{x,y}Tf\right)\right](\xi,y)\\
=\ &\mathcal{F}_{\eta}^{-1}\left[e^{-4\pi^2i(\xi^2 + \eta^2)t}\mathcal{F}_{x,y}Tf\right](\xi,y)
=\mathcal{F}_{\eta}^{-1}\left[e^{-4\pi^2i(\xi^2+\eta^2)t}\mathcal{F}_{y}\left((\mathcal{F}_{z,c}f)(\xi,y\xi)\right)\right](\xi,y)\\
=\ &\mathcal{F}_{\eta}^{-1}\left[e^{-4\pi^2i(\xi^2+\eta^2)t}\frac{1}{|\xi|}\left((\mathcal{F}_{z}f)(\xi,-\eta/\xi)\right)\right](\xi,y)
=\mathcal{F}_{\eta}\left[e^{-4\pi^2i(\xi^2+\eta^2)t}\frac{1}{|\xi|}\left((\mathcal{F}_{z}f)(\xi,\eta/\xi)\right)\right](\xi,y)\\
=\ &\mathcal{F}_{\eta}\left[e^{-4\pi^2i(1+\eta^2)\xi^2t}\left((\mathcal{F}_{z}f)(\xi,\eta)\right)\right](\xi,y\xi)=\mathcal{F}_{z,\eta}\left[\mathcal{F}_{\xi}^{-1}e^{-4\pi^2i(1+\eta^2)\xi^2t}\left((\mathcal{F}_{z}f)(\xi,\eta)\right)\right](\xi,y\xi)\\=\ & \mathcal{F}_{x}T(S_1((1+c^2)t)f(c))(\xi,y).
\end{align*}
The general case follows by a density argument.
\end{proof}
\section{Plane waves: the continuous case}
Consider
$$
X=\left\{ \phi\in L^\infty(\real^2): \phi=Tf,\ f\in L^1_c(H^1_z)\cap L^\infty_c(L^2_z)\right\}
$$
endowed with the induced norm
$$
\|\phi\|_X = \|f\|_{L^1_c(H^1_z)} + \|f\|_{L^\infty_c(L^2_z)}.
$$
Notice that such a norm is well-defined, by Corollary \ref{unicarepresentacao}. Furthermore, since $L^1_c(H^1_z)\hookrightarrow L^1_c(L^\infty_z)$, one has, by Proposition \ref{integrabilidadeLp} and Remark \ref{estaremL4}, 
$$
\|\phi\|_{L^4}, \|\phi\|_{L^\infty} \lesssim \|\phi\|_X.
$$
Now consider the subspace of $L^4(\real^2)$
$$
E=H^1(\real^2) + X,
$$
endowed with the semi-norm
$$
\|u\|_E=\inf_{u=v+\phi} \left\{\|v\|_{H^1} + \|\phi\|_X\right\}.
$$
\begin{lema}
The semi-norm $\|\cdot\|_E$ is a norm in $E$. Moreover, $(E,\|\cdot\|_E)$ is a Banach space and $E\hookrightarrow L^4(\real^2)$.
\end{lema}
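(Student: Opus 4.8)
The plan is to recognize this statement as an instance of the classical construction of the \emph{sum of two Banach spaces} from interpolation theory: whenever two Banach spaces embed continuously into a common Hausdorff topological vector space, their algebraic sum, equipped with the infimum functional, is again a Banach space that embeds into the ambient space. Here the ambient space is $L^4(\real^2)$, into which both summands embed: $H^1(\real^2)\hookrightarrow L^4(\real^2)$ by the Gagliardo--Nirenberg--Sobolev inequality, and $X\hookrightarrow L^4(\real^2)$ by Proposition \ref{integrabilidadeLp} together with Remark \ref{estaremL4}. I would organize the proof around three points: definiteness of $\|\cdot\|_E$, completeness of $X$, and completeness of $E$.

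First I would settle the norm property together with the embedding in one stroke. Fix $C>0$ with $\|v\|_{L^4}\le C\|v\|_{H^1}$ and $\|\phi\|_{L^4}\le C\|\phi\|_X$. For any decomposition $u=v+\phi$ one has $\|u\|_{L^4}\le C(\|v\|_{H^1}+\|\phi\|_X)$, and taking the infimum over all such decompositions yields
$$
\|u\|_{L^4}\le C\,\|u\|_E.
$$
This single estimate already gives the continuous embedding $E\hookrightarrow L^4(\real^2)$ and, in particular, forces $\|u\|_E=0\Rightarrow u=0$, so $\|\cdot\|_E$ is definite; positive homogeneity and the triangle inequality of the infimum functional are routine and complete the verification that $\|\cdot\|_E$ is a genuine norm. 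Note that, unlike the numerable case, the sum $H^1(\real^2)+X$ need \emph{not} be direct (by Corollary \ref{integrabilidadeL2} there are nonzero elements of $X$ lying in $L^2$, hence possibly in $H^1$), which is precisely why the infimum, rather than a componentwise expression, is the correct object.

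Next I would record that $X$ is itself a Banach space. By Corollary \ref{unicarepresentacao} the transform $T$ is injective, so by construction it is an isometric isomorphism from $L^1_c(H^1_z)\cap L^\infty_c(L^2_z)$, equipped with the norm $\|f\|_{L^1_c(H^1_z)}+\|f\|_{L^\infty_c(L^2_z)}$, onto $X$. Since $L^1_c(H^1_z)$ and $L^\infty_c(L^2_z)$ are Banach spaces, their intersection under the sum norm is complete, and hence so is $X$.

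Finally, for completeness of $E$ I would use the absolutely-convergent-series criterion. Given $\sum_n\|u_n\|_E<\infty$, choose for each $n$ a near-optimal decomposition $u_n=v_n+\phi_n$ with $\|v_n\|_{H^1}+\|\phi_n\|_X\le \|u_n\|_E+2^{-n}$; then $\sum_n\|v_n\|_{H^1}<\infty$ and $\sum_n\|\phi_n\|_X<\infty$. By completeness of $H^1(\real^2)$ and of $X$ the partial sums converge to some $v\in H^1(\real^2)$ and $\phi\in X$, and setting $u=v+\phi\in E$ the estimate
$$
\Big\|u-\sum_{n\le N}u_n\Big\|_E\le \Big\|v-\sum_{n\le N}v_n\Big\|_{H^1}+\Big\|\phi-\sum_{n\le N}\phi_n\Big\|_X\longrightarrow 0
$$
shows the series converges in $E$, whence $E$ is complete. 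The only genuine subtlety, and the place I would be most careful, is the definiteness of the infimum semi-norm: it rests entirely on the common embedding into $L^4(\real^2)$, without which distinct decompositions could conspire to make the infimum vanish on a nonzero element.
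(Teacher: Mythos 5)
Your proof is correct and follows essentially the same route as the paper: definiteness of $\|\cdot\|_E$ via the common embedding into $L^4(\real^2)$ obtained by taking the infimum over decompositions, and completeness by passing to near-optimal decompositions and invoking completeness of the two summands. The only differences are cosmetic or additive: you use the absolutely-convergent-series criterion where the paper extracts a telescoping Cauchy subsequence, and you explicitly justify the completeness of $X$ (via the isometry with $L^1_c(H^1_z)\cap L^\infty_c(L^2_z)$ given by the injectivity of $T$), a point the paper uses but leaves implicit.
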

\begin{proof}
Take $u\in E$. If $u=v+\phi$, $v\in H^1(\real^2)$, $\phi\in X$, one has
$$
\|u\|_{L^4}\le \|v\|_{L^4} + \|\phi\|_{L^4}\lesssim \|v\|_{H^1} + \|\phi\|_X.
$$
Taking the infimum on the right hand side,
$$
\|u\|_{L^4}\lesssim \|u\|_E.
$$
Hence, if $\|u\|_E=0$, $u=0$, which proves that $\|\cdot\|_E$ is a norm. We now prove that $E$, with the norm $\|\cdot\|_E$, is complete. Given $(u_n)_{n\in\nat}\subset E$, suppose that
$$
\|u_n-u_m\|_E\to 0,\quad n,m\to \infty.
$$
For each $k\in\nat$, there exist $n(k)\le m(k)$, increasing with $k$, such that
$$
\|u_n-u_m\|_E\le \frac{1}{2^k}, \quad n\ge n(k), m\ge m(k).
$$
Define
$$
\tilde{u}_k=u_{\max\{m(k), n(k+1)\}}
$$
and take $v_k\in H^1(\real^2)$ and $\phi_k\in X$, inductively, such that $\tilde{u}_k=v_k+\phi_k$ and
$$
\|v_k-v_{k+1}\|_{H^1} + \|\phi_k - \phi_{k+1}\|_{X}\le \|\tilde{u}_k-\tilde{u}_{k+1}\|_E + \frac{1}{2^k}\le \frac{1}{2^{k-1}}.
$$
This implies that $(v_k)_{k\in\nat}$ and $(\phi_k)_{k\in\nat}$ are Cauchy sequences in $H^1(\real^2)$ and $X$, respectively, and so
$$
v_k\to v\mbox{ in } H^1(\real^2),\quad \phi_k\to \phi\mbox{ in } X.
$$
Hence
$$
\|\tilde{u}_k-(v+\phi)\|_E\le \|v_k-v\|_{H^1} + \|\phi_k-\phi\|_X \to 0
$$
and so $\tilde{u}_k\to v+\phi$ in $E$. Since $(u_n)_{n\in\nat}$ is Cauchy in $E$, this implies that $u_n\to v+\phi$.
\end{proof}

\begin{lema}
Let $\{S_2(t)\}_{t\in\real}$ be the Schrödinger group in dimension two. Then 
$$
\|S_2(t)\|_{E\to E}=1,\ t\in \real.
$$
\end{lema}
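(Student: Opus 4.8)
The plan is to show that $S_2(t)$ acts isometrically on each of the two summands $H^1(\real^2)$ and $X$ separately, which forces it to be a contraction for the infimal-convolution norm $\|\cdot\|_E$, and then to upgrade the contraction bound to an exact isometry using the group property. First I would recall that $S_2(t)$ is unitary on $L^2(\real^2)$ and commutes with the spatial derivatives, whence $\|S_2(t)v\|_{H^1}=\|v\|_{H^1}$ for every $v\in H^1(\real^2)$; in particular $S_2(t)$ maps $H^1(\real^2)$ isometrically onto itself.

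The main step is to show that $S_2(t)$ maps $X$ isometrically onto itself. Given $\phi=Tf\in X$, Proposition \ref{semigrupoT} gives $S_2(t)\phi=Tg$ in $\mathcal{S}'(\real^2)$, where $g(\cdot,c)=S_1((1+c^2)t)f(\cdot,c)$. Since $S_1(s)$ is unitary on $L^2(\real)$ and preserves the $H^1(\real)$ norm for every $s$ (it commutes with $\partial_z$ and is an $L^2$-isometry), one has $\|g(\cdot,c)\|_{L^2_z}=\|f(\cdot,c)\|_{L^2_z}$ and $\|g(\cdot,c)\|_{H^1_z}=\|f(\cdot,c)\|_{H^1_z}$ for a.e.\ $c$, once the measurability of $c\mapsto g(\cdot,c)$ is checked (which follows from the strong continuity of $S_1$). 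Integrating in $c$ and taking the essential supremum in $c$ respectively yields $\|g\|_{L^1_c(H^1_z)}=\|f\|_{L^1_c(H^1_z)}$ and $\|g\|_{L^\infty_c(L^2_z)}=\|f\|_{L^\infty_c(L^2_z)}$, so $g\in L^1_c(H^1_z)\cap L^\infty_c(L^2_z)$, $S_2(t)\phi\in X$, and $\|S_2(t)\phi\|_X=\|\phi\|_X$; here the well-definedness of the $X$-norm is guaranteed by Corollary \ref{unicarepresentacao}.

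With these two facts in hand, for any decomposition $u=v+\phi$ with $v\in H^1(\real^2)$ and $\phi\in X$, linearity gives $S_2(t)u=S_2(t)v+S_2(t)\phi$ with $S_2(t)v\in H^1(\real^2)$ and $S_2(t)\phi\in X$, so that
$$
\|S_2(t)u\|_E\le \|S_2(t)v\|_{H^1}+\|S_2(t)\phi\|_X=\|v\|_{H^1}+\|\phi\|_X.
$$
Taking the infimum over all such decompositions yields $\|S_2(t)u\|_E\le\|u\|_E$, i.e.\ $\|S_2(t)\|_{E\to E}\le 1$. To obtain equality, I would apply this bound to the inverse $S_2(-t)=S_2(t)^{-1}$: then $\|u\|_E=\|S_2(-t)S_2(t)u\|_E\le\|S_2(t)u\|_E\le\|u\|_E$, so equality holds throughout and $S_2(t)$ is in fact an isometry of $E$, giving $\|S_2(t)\|_{E\to E}=1$.

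The only genuinely delicate point is the measurability and well-definedness needed to transfer the pointwise-in-$c$ isometry of $S_1$ into the integrated $X$-norm identity; everything else is a formal consequence of unitarity and the group law, so I expect no serious obstacle beyond this bookkeeping.
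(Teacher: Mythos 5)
Your proof is correct and follows essentially the same route as the paper: the key step in both is the identity $S_2(t)Tf = T(S_1((1+c^2)t)f)$ from Proposition \ref{semigrupoT} together with the unitarity of $S_1$ on $H^1_z$ and $L^2_z$, after which one takes the infimum over decompositions. Your explicit use of $S_2(-t)=S_2(t)^{-1}$ to upgrade the contraction bound to an exact isometry is a welcome clarification of a step the paper compresses into ``taking the infimum on both sides.''
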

\begin{proof}
Given $u\in E$, write $u=v+\phi$, $v\in H^1(\real^2)$, $\phi\in X$, $\phi=Tf$.  Then, by Proposition \ref{semigrupoT},
\begin{align*}
\|S_2(t)\phi\|_X &= \|S_1((1+c^2)t)f\|_{L^1_c(H^1_z)} +\|S_1((1+c^2)t)f\|_{L^\infty_c(L^2_z)} \\&= \|f\|_{L^1_c(H^1_z)} + \|f\|_{L^\infty_c(L^2_z)}= \|\phi\|_X.
\end{align*}
Hence
$$
\|S_2(t)v\|_{H^1} + \|S_2(t)\phi\|_X = \|v\|_{H^1} + \|\phi\|_X
$$
and, taking the infimum on both sides, one concludes the proof.
\end{proof}
For the sake of clarity, we define do we mean as a solution of (NLS) over $E$.
\begin{defi}\label{defisolucao}
Given $u_0\in E$ and $u\in C([0,T], E)$, $T>0$, we say that $u$ is a solution of (NLS) with initial data $u_0$ if $u$ satisfies the Duhamel formula
$$
u(t)=S_2(t)u_0 + i\lambda\int_0^t S_2(t-s)|u(s)|^\sigma u(s) ds, \ 0\le t\le T,
$$
where $S_2$ is the Schrödinger group in dimension two.
\end{defi}
\begin{lema}\label{solucaounica}
For any $\sigma\ge1$ and $u_0\in E$, if $u_1,u_2\in C([0,T], E)$, $T>0$, are two solutions of (NLS) with initial data $u_0$, then $u_1\equiv u_2$.
\end{lema}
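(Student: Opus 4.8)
The plan is to subtract the two Duhamel formulas and run a Strichartz-type contraction on the difference $w=u_1-u_2$, on a short time interval whose length depends only on $\sup_{[0,T]}\|u_i(t)\|_E$. Writing $N(u)=|u|^\sigma u$, Definition \ref{defisolucao} gives, after cancellation of the linear terms $S_2(t)u_0$,
$$
w(t)=i\lambda\int_0^t S_2(t-s)\big(N(u_1(s))-N(u_2(s))\big)\,ds,\qquad 0\le t\le T.
$$

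The conceptual obstacle is that $E$ is not contained in $L^2$, so $w$ cannot be placed directly in the usual $L^2$-based Strichartz spaces. The resolution is that, although $u_1,u_2\notin L^2$, the nonlinear difference does lie in $L^2$. First I would record the continuous embeddings $E\hookrightarrow L^p(\real^2)$ for every $4\le p<\infty$: for $u=v+\phi$ one has $\|v\|_{L^p}\lesssim\|v\|_{H^1}$ by Sobolev, and $\|\phi\|_{L^p}\le\|\phi\|_{L^4}^{4/p}\|\phi\|_{L^\infty}^{1-4/p}\lesssim\|\phi\|_X$ by interpolation (recall $\|\phi\|_{L^4},\|\phi\|_{L^\infty}\lesssim\|\phi\|_X$); taking the infimum over decompositions yields $\|u\|_{L^p}\lesssim\|u\|_E$. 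Then, using the elementary bound $|N(a)-N(b)|\le(\sigma+1)(|a|^\sigma+|b|^\sigma)|a-b|$ together with Hölder's inequality for the exponents $(4,4)$,
$$
\|N(u_1(s))-N(u_2(s))\|_{L^2}\lesssim\big(\|u_1(s)\|_{L^{4\sigma}}^\sigma+\|u_2(s)\|_{L^{4\sigma}}^\sigma\big)\,\|w(s)\|_{L^4}\le K\,\|w(s)\|_{L^4},
$$
where, since $\sigma\ge1$ forces $4\sigma\ge4$, the embedding above makes $K:=C\sup_{[0,T]}\big(\|u_1(t)\|_E^\sigma+\|u_2(t)\|_E^\sigma\big)$ finite and independent of $t$.

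Next I would apply the inhomogeneous Strichartz estimate in dimension two with the admissible output pair $(4,4)$ and the admissible input pair $(\infty,2)$, whose dual is $L^1_tL^2_x$: on any subinterval $[0,\tau]$,
$$
\|w\|_{L^4_t([0,\tau];L^4_x)}\lesssim\|N(u_1)-N(u_2)\|_{L^1_t([0,\tau];L^2_x)}\le K\int_0^\tau\|w(s)\|_{L^4}\,ds\le K\,\tau^{3/4}\,\|w\|_{L^4_t([0,\tau];L^4_x)},
$$
the last step by Hölder in time. Since $w\in C([0,T],E)\hookrightarrow C([0,T],L^4)$, the left-hand quantity is a priori finite, so choosing $\tau$ small enough that $CK\tau^{3/4}<1$ forces $\|w\|_{L^4_t([0,\tau];L^4_x)}=0$, hence $w\equiv0$ on $[0,\tau]$ by continuity. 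As $\tau$ depends only on $K$, which is uniform over $[0,T]$, I would finish by covering $[0,T]$ with finitely many intervals of length $\tau$ and repeating the argument from each left endpoint (where $w$ already vanishes), obtaining $u_1\equiv u_2$ on $[0,T]$.

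The main difficulty is exactly the step showing that the nonlinear difference lands in $L^2$ despite $u_i\notin L^2$: this is what makes the $L^2$-Strichartz machinery applicable to a difference living only in $E$, and it is precisely where the hypothesis $\sigma\ge1$ enters, through the embedding $E\hookrightarrow L^{4\sigma}$.
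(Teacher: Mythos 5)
Your proof is correct and follows essentially the same route as the paper's: subtract the Duhamel formulas, use the pointwise bound $|N(a)-N(b)|\lesssim(|a|^\sigma+|b|^\sigma)|a-b|$ together with the embedding $E\hookrightarrow L^p$ for $p\ge 4$ (where $\sigma\ge1$ enters) to place the nonlinear difference in an $L^2$-based dual Strichartz space, then close with a Strichartz estimate. The only differences are cosmetic: you fix the specific exponents $r'=2$, output pair $(4,4)$, where the paper keeps general exponents $r,p$ subject to $r'p\ge4$, $r'p'\ge 4/\sigma$, and you run an explicit small-interval contraction and iterate, where the paper instead invokes the abstract uniqueness lemma \cite[Lemma 4.2.2]{cazenave}.
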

\begin{proof}
Since $u_1, u_2$ are two solutions with the same initial data,
\begin{equation}\label{diferenca}
(u_1-u_2)(t)=i\lambda\int_0^t S_2(t-s)\left(|u_1(s)|^\sigma u_1(s)-|u_2(s)|^\sigma u_2(s)\right)ds, \ 0\le t\le T.
\end{equation}
Let $r\ge 2$ be such that $r'\ge 4/(\sigma+1)$. Take $p$ such that
\begin{equation}\label{restricoesrp}
r'p\ge 4,\quad r'p'\ge 4/\sigma.
\end{equation}
Then, since
$$
||u_1 |^\sigma u_1 - |u_2|^\sigma u_2|\le  C(|u_1|^\sigma + |u_2|^\sigma)|u_1-u_2|,\ 0\le s\le T,
$$
using Hölder's inequality,
$$
\|||u_1|^\sigma u_1 - |u_2|^\sigma u_2|\|_{L^{r'}}\le C (\|u_1\|_{L^{r'p'\sigma}}^\sigma+\|u_2\|_{L^{r'p'\sigma}}^\sigma)\|u_1-u_2\|_{L^{r'p}}.
$$
Define $\gamma$ and $q$ such that $(\gamma, r'p)$ and $(q,r)$ are admissible pairs, i.e.,
$$
\frac{2}{\gamma}=\frac{1}{2}-\frac{1}{r'p},\quad \frac{2}{q}=\frac{1}{2}-\frac{1}{r}.
$$
For any $J=[0,t]$, $0<t\le T$ and for $q=2r/(r-2)$, it follows that
$$
\|||u_1|^\sigma u_1 - |u_2|^\sigma u_2|\|_{L^{q'}(J,L^{r'})}\le C (\|u_1\|_{L^\infty(J,L^{r'p'\sigma})}^\sigma+\|u_2\|_{L^\infty(J,L^{r'p'\sigma})}^\sigma)\|u_1-u_2\|_{L^{q'}(J,L^{r'p})}
$$
Using Strichartz's estimates in \eqref{diferenca},
$$
\|u_1-u_2\|_{L^\gamma(J,L^{r'p})}\le C(\|u_1\|_{L^\infty(J,L^{r'p'\sigma})}^\sigma+\|u_2\|_{L^\infty(J,L^{r'p'\sigma})}^\sigma)\|u_1-u_2\|_{L^{q'}(J,L^{r'p})}.
$$
Since $E\hookrightarrow L^{r'p'\sigma}(\real^2)$,
$$
\|u_1-u_2\|_{L^\gamma(J,L^{r'p})}\le C(\|u_1\|_{L^\infty(J,E)}^\sigma+\|u_2\|_{L^\infty(J,E)}^\sigma)\|u_1-u_2\|_{L^{q'}(J,L^{r'p})}\le C'\|u_1-u_2\|_{L^{q'}(J,L^{r'p})}
$$
where $C'$ does not depend on $J$. The result now follows from \cite[Lemma 4.2.2]{cazenave}.
\end{proof}

\begin{lema}
For any $\sigma\ge 1$ and $u\in E$, $|u|^\sigma u\in H^{-1}(\real^2)$.
\end{lema}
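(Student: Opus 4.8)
The plan is to reduce everything to an $L^2$ estimate and then invoke the embedding $L^2(\real^2)\hookrightarrow H^{-1}(\real^2)$. First I would decompose $u=v+\phi$ with $v\in H^1(\real^2)$ and $\phi\in X$ and record the integrability of each piece. In dimension two, $H^1(\real^2)\hookrightarrow L^q(\real^2)$ for every $q\in[2,\infty)$, while the estimates already established for the space $X$ give $\phi\in L^4(\real^2)\cap L^\infty(\real^2)$, hence $\phi\in L^q(\real^2)$ for every $q\in[4,\infty]$ by interpolation. Combining the two, $u\in L^q(\real^2)$ for every $q\in[4,\infty)$; equivalently $E\hookrightarrow L^q(\real^2)$ on this range.

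The crucial use of the hypothesis $\sigma\ge1$ comes next: since $\sigma\ge1$ one has $2(\sigma+1)\ge4$, so the previous step applies with $q=2(\sigma+1)$ and yields $u\in L^{2(\sigma+1)}(\real^2)$. Then
$$
\||u|^\sigma u\|_{L^2}=\|u\|_{L^{2(\sigma+1)}}^{\sigma+1}\lesssim \|u\|_E^{\sigma+1}<\infty,
$$
so $|u|^\sigma u\in L^2(\real^2)$. Finally, the trivial embedding $H^1(\real^2)\hookrightarrow L^2(\real^2)$ dualizes to $L^2(\real^2)\hookrightarrow H^{-1}(\real^2)$, which gives the claim; concretely, for any $w\in H^1(\real^2)$ one has $|\langle |u|^\sigma u,w\rangle|\le \||u|^\sigma u\|_{L^2}\|w\|_{L^2}\lesssim \|u\|_E^{\sigma+1}\|w\|_{H^1}$, so $|u|^\sigma u$ defines a bounded linear functional on $H^1(\real^2)$.

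There is no serious obstacle in the argument itself: it is a one-line integrability computation once the $L^q$ mapping properties of $E$ are in place. The only point that deserves attention, and the reason the hypothesis $\sigma\ge1$ is needed, is that the plane wave component $\phi$ fails to belong to $L^p(\real^2)$ for any $p<4$, so it has no low-order integrability. Consequently one cannot push the exponent $2(\sigma+1)$ below $4$, and the borderline case $\sigma=1$, where $2(\sigma+1)=4$ and $u$ is controlled precisely in $L^4(\real^2)$, the worst space available, is exactly what forces the restriction. Were $\phi$ merely in some $L^p(\real^2)$ with $p<4$, a smaller $\sigma$ could be accommodated.
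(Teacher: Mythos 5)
Your proof is correct and follows essentially the same route as the paper: decompose $u=v+\phi$, use the Sobolev embedding of $H^1(\real^2)$ together with the $L^4\cap L^\infty$ bound for $X$ to control $|u|^{\sigma+1}$ in a Lebesgue space, and conclude by duality. The only (immaterial) difference is that the paper places $|v|^{\sigma+1}$ in $L^{\frac{\sigma+2}{\sigma+1}}(\real^2)$ and $|\phi|^{\sigma+1}$ in $L^2(\real^2)$ separately, whereas you put the whole nonlinearity in $L^2(\real^2)$; your closing observation that the constraint $\sigma\ge1$ comes entirely from the plane-wave component's lack of integrability below $L^4$ matches the structure of the paper's argument.
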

\begin{proof}
Writing $u=v+\phi$,
$$
|u|^{\sigma+1}\lesssim |v|^{\sigma+1} + |\phi|^{\sigma+1}.
$$
Since $v\in H^1(\real^2)$, $v\in L^{\sigma+2}(\real^2)$ and so $|v|^{\sigma+1}\in L^{\frac{\sigma+2}{\sigma+1}}(\real^2)$. On the other hand, recalling that $\phi\in L^p$, for any $p\ge4$, $|\phi|^{\sigma+1}\in L^2(\real^2)$. Hence $|u|^{\sigma+1}\in L^2(\real^2) + L^{\frac{\sigma+2}{\sigma+1}}(\real^2)\hookrightarrow H^{-1}(\real^2)$.
\end{proof}
\begin{proof}[Proof of Theorem \ref{existenciacontinuo}]
Write $u_0=v_0+\phi_0$, $v_0\in H^1(\real^2)$, $\phi_0\in X$. Consider the initial value problem for $v\in C([0,T], H^1(\real^2))$,
\begin{equation}\label{pviv}
iv_t + v_{xx} + v_{yy} + \lambda|v+\phi|^\sigma(v+\phi)=0,\quad v(0)=v_0,\ \phi(t)=S_2(t)\phi_0.
\end{equation}
By the previous lemma, the nonlinear term is in $H^{-1}(\real^2)$. It follows from Kato's local existence theory (see, for example, \cite[Theorem 4.4.1]{cazenave}) that there exists $T=T(\|v_0\|_{H^1},\|\phi_0\|_X)>0$ and $v\in C([0,T], H^1(\real^2))$ solution of \eqref{pviv}, which depends continuously on $v_0$ and $\phi_0$. Setting $u(t)=v(t)+\phi(t)$, $0\le t< T$, it becomes clear that $u$ is a solution in $E$ of (NLS). Moreover, it is unique, by lemma \ref{solucaounica}.

Define
$$
T_{max}=\sup\{T>0: \mbox{ there exists } u\in C([0,T],E) \mbox{ solution of (NLS)},\ u(0)=u_0\}.
$$
If, for some $M>0$, $\|u(t)\|_E<M$ as $t\to T_{max}$, then, for each $t$, one may find $\tilde{v}(t)\in H^1(\real^2)$ and $\tilde{\phi}(t)\in X$ such that
$$
\|\tilde{v}(t)\|_{H^1} + \|\tilde{\phi}(t)\|_X\le \|u(t)\|_E + M< 2M, \quad  u(t)=\tilde{v}(t)+\tilde{\phi}(t).
$$
For $t_0$ sufficiently close to $T_{max}$,
$$
\overline{T}:=T(\|\tilde{v}(t_0)\|_{H^1}, \|\tilde{\phi}(t_0)\|_X)>T_{max}-t_0.
$$
Then one may solve \eqref{pviv} with initial data $v_0=\tilde{v}(t_0)$ and $\phi_0=\tilde{\phi}(t_0)$, thus obtaining a unique solution of (NLS) defined on $[T_{max}-t_0,\overline{T}]$. This implies that $u$ is extendible beyond $T_{max}$, a contradiction. Hence there exists a unique maximal solution $u\in C([0,T_{max}), E)$ of (NLS) and, if $T_{max}<\infty$,
$$
\|u(t)\|_E\to\infty,\quad t\to T_{max}.
$$

\end{proof}
\begin{proof}[Sketch of the proof of Theorem \ref{estabilidadecontinuo}]
The proof is very similar to that of the numerable case and again will only be done for $\sigma=4$. We start with some estimates for $S_2\phi_0$: using Propositions \ref{integrabilidadeLp} and \ref{semigrupoT},
\begin{align*}
\|S_2(t)Tf_0\|_{L^\infty}&=\|T(S_1((1+c^2)t)f_0)\|_{L^\infty}\le \|S_1((1+c^2)t)f_0\|_{L^1_c(L^\infty_z)}\\&\lesssim \frac{1}{t^{1/2}}\left\|\frac{f_0}{(1+c^2)^{1/2}}\right\|_{L^1_c(L^1_z)},\ \|f_0\|_{L^1_c(H^1_z)}
\end{align*}
\begin{align*}
\|\nabla S_2(t)Tf_0\|_{L^\infty}&=\|S_2(t)\nabla Tf_0\|_{L^\infty} = \|S_2(t)((f_0)_z,-c(f_0)_z)\|_{L^\infty}\\&\le \|S_1((1+c^2)t)((f_0)_z,-c(f_0)_z)\|_{L^1_c(L^\infty_z)} \\&\lesssim \frac{1}{t^{1/2}}\left\|(f_0)_z\right\|_{L^1_c(L^1_z)}, \|f_0\|_{L^1_c(H^2_z)} + \|cf_0\|_{L^1_c(H^2_z)}.
\end{align*}

The Theorem will be proved if we show that the unique solution $v$ of
$$
iv_t + v_{xx} + v_{yy} + \lambda|v+\phi|^\sigma(v+\phi)=0,\quad v(0)=v_0,\ \phi(t)=S_2(t)\phi_0
$$
is global and remains small in the $H^1$ norm, for all $t>0$. As before, we decompose the nonlinear term as
$$
\lambda|v+\phi|^\sigma(v+\phi)=\sum_{i=0}^5g_i(v,\phi),
$$
define $h$ as in \eqref{funcaoh} and apply some Strichartz estimates in the Duhamel's formula for $v$ in order to obtain inequality \eqref{estimativah}. The estimates for $i=1,...,5$ are \textit{mutatis mutandis} those that were derived for the numerable case, since one has control of the $W^{1,\infty}$ norm of $S_2\phi_0$. For the autonomous term, using the decay of $\|\phi\|_{W^{1,\infty}}$,
$$
\||\phi|^5\|_{L^1(H^1)}\le \int_0^t \|\phi(s)\|_{W^{1,\infty}}\|\phi(s)\|_{L^\infty}^2\|\phi(s)\|_4^2\lesssim \|\phi\|_{X}^2\left(1+\int_1^t \frac{1}{s^{3/2}}\right)\lesssim \epsilon^2
$$
\end{proof}

\begin{proof}[Proof of Theorem \ref{grandesdados}]

Let $\psi\in C^\infty(\real)$ be such that $0\le\psi\le 1$, $\psi\equiv 1$ in $[-1,1]$ and $\psi\equiv 0$ in $\real\setminus[-2,2]$. Take $g\in \mathcal{S}(\real)$ such that
$$
\|\mathcal{F}_\xi^{-1}\psi\|_{L^1}\|g\|_{W^{1,1}} + \|g\|_{H^2}<\sqrt{M},\  \|g\|_{H^1}<\sqrt{\epsilon(M)},\ |\mathcal{F}g(0)|>0.
$$

For any $\epsilon>0$, define
$$
\psi_\epsilon(\xi)=\psi\left(\frac{\xi}{\epsilon}\right),\quad g_\epsilon=g-\left(\mathcal{F}_{\xi}^{-1}\psi_\epsilon\right)\star_z g.
$$
Then
$$
\|g_\epsilon\|_{H^s} = \|(1+|\xi|^2)^{s/2}\mathcal{F}g_\epsilon\|_{L^2} \le \|(1+|\xi|^2)^{s/2}\mathcal{F}g\|_{L^2} = \|g\|_{H^s}, \ s=1,2.
$$
and
$$
\|g-g_\epsilon\|_{W^{1,1}}\le \|\mathcal{F}_\xi^{-1}\psi_\epsilon \|_{L^1}\|g\|_{W^{1,1}} \le \|\mathcal{F}_\xi^{-1}\psi\|_{L^1}\|g\|_{W^{1,1}} < \sqrt{M}.
$$
Finally, take $f\in \mathcal{S}(\real^2)$ such that
$$
\|f\|_{L^1\cap L^\infty}<\sqrt{\epsilon},\ \|cf\|_{L^1}<\sqrt{M},\ \|cf\|_{L^2}>\frac{K}{\||\xi|^{1/2}\mathcal{F}g\|_{L^2(|\xi|>1)}}.
$$
Defining $\phi_\epsilon = T(f\otimes g_\epsilon)$, it is now easy to check that $\phi_\epsilon$ satisfies the conditions of Theorem \ref{estabilidadecontinuo}. Moreover, by Corollary \ref{integrabilidadeL2}, for $\epsilon>0$ small
$$
\|\nabla \phi_\epsilon\|_{L^2} \ge \|cf\|_{L^2}\||\xi|^{1/2}\mathcal{F}g_\epsilon\|_{L^2}> \|cf\|_{L^2}\||\xi|^{1/2}\mathcal{F}g\|_{L^2(|\xi|>1)}>K.
$$
All that is left is to prove that, for $\epsilon>0$ small enough, $\|\phi_\epsilon\|_{L^2}>K$. This follows from
$$
\|\phi_\epsilon\|_{L^2}=\|f\|_{L^2}\left\|\frac{\mathcal{F}g_\epsilon}{|\xi|^{1/2}}\right\|_{2}\to \infty,\ \epsilon \to 0.
$$
\end{proof}

\begin{lema}
Suppose that $f\in L^1_c(H^1_z)\cap L^\infty_c(L^2_z)$. If
\begin{equation}\label{cond1}
zf, (1+c^2)f_z\in L^1_c(L^2_z)\cap L^\infty_c(L^2_z).
\end{equation}
then, for some $C=C(f)$,
$$
\|S_2(t)Tf\|_{L^p}\lesssim \left\{\begin{array}{ll}
C & 2\le p\le \infty\\
C(1+t^2) & 1<p<2
\end{array}\right..
$$
\end{lema}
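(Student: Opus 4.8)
The plan is to push every estimate through the plane wave transform by means of Proposition \ref{semigrupoT}, which gives $S_2(t)Tf = T(g(t))$ with $g(t)(\cdot,c) = S_1((1+c^2)t)f(\cdot,c)$. Thus everything reduces to one-dimensional estimates on the profiles $g(t)(\cdot,c)$, which evolve under the free Schrödinger group $S_1$ with rescaled time $(1+c^2)t$. I would use three facts about $S_1$: it conserves $\|\cdot\|_{L^2_z}$ and $\|\cdot\|_{H^1_z}$; it satisfies the dispersive decay $\|S_1(\tau)v\|_{L^\infty}\lesssim |\tau|^{-1/2}\|zv\|_{L^2}^{1/2}\|v\|_{L^2}^{1/2}$ (as already used in Lemma \ref{decaimento}); and the Galilean operator $z+2i\tau\partial_z$ commutes with it, yielding the weighted bound $\|z\,S_1(\tau)v\|_{L^2}\lesssim \|zv\|_{L^2}+|\tau|\,\|v_z\|_{L^2}$.

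For the range $2\le p\le\infty$ I would argue by interpolation between the two endpoints. At $p=\infty$, the second estimate of Proposition \ref{integrabilidadeLp} gives $\|Tg(t)\|_{L^\infty}\le \|g(t)\|_{L^1_c(L^\infty_z)}$, and bounding each profile by $\|S_1((1+c^2)t)f(c)\|_{L^\infty_z}\lesssim \|f(c)\|_{H^1_z}$ (Sobolev injection plus $H^1$ conservation) and integrating in $c$ yields $\lesssim \|f\|_{L^1_c(H^1_z)}$, uniformly in $t$. At $p=2$, Corollary \ref{integrabilidadeL2} applies: since $S_1$ acts on $\mathcal{F}_z$ by a unimodular phase, $|\mathcal{F}_z g(t)|=|\mathcal{F}_z f|$ pointwise, so $\|Tg(t)\|_{L^2}=\|Tf\|_{L^2}$ is conserved, and finiteness of $\|Tf\|_{L^2}^2=\int |\xi|^{-1}|\mathcal{F}_z f|^2$ is ensured by the hypotheses (the $zf$ condition forces $\mathcal{F}_z f(\cdot,c)\in H^1_\xi$, controlling the $|\xi|^{-1}$ weight near the origin, while the $L^1_c\cap L^\infty_c$ conditions handle large $|\xi|$ and the $c$-integration). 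Interpolating, $\|S_2(t)Tf\|_{L^p}\le \|S_2(t)Tf\|_{L^2}^{2/p}\|S_2(t)Tf\|_{L^\infty}^{1-2/p}$ is uniformly bounded for all $2\le p\le\infty$.

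For $1<p<2$ the transform estimates of Proposition \ref{integrabilidadeLp} no longer apply and $Tg(t)$ need not be integrable at spatial infinity, so a weighted argument is unavoidable; this is the step I expect to be the main obstacle. The plan is to control a weighted $L^2$ norm $\big\|\langle (x,y)\rangle^{s}\,Tg(t)\big\|_{L^2}$ for a suitable $s<2$ and then close by Hölder, $\|Tg(t)\|_{L^p}\le \big\|\langle (x,y)\rangle^{-s}\big\|_{L^r}\big\|\langle (x,y)\rangle^{s}\,Tg(t)\big\|_{L^2}$ with $1/r=1/p-1/2$ and $s$ chosen so that $sr>2$. The weighted norms I would estimate on the Fourier side, where the $(x,y)$-weights become $(\xi,\eta)$-derivatives acting on $e^{-4\pi^2 i(\xi^2+\eta^2)t}\,\mathcal{F}_{x,y}(Tf)$: each derivative falling on the phase produces a factor of order $t$ times a spatial derivative of $Tf$ (equivalently, via the derivation property, the terms $T(f_z)$ and $T(cf_z)$), while derivatives falling on $\mathcal{F}_{x,y}(Tf)$ reproduce $T(zf)$-type terms. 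The hypotheses $zf,(1+c^2)f_z\in L^1_c(L^2_z)\cap L^\infty_c(L^2_z)$ are exactly what keeps all these transforms in $L^2$ (again through Corollary \ref{integrabilidadeL2}, noting that $\mathcal{F}_z(f_z)(0,c)=0$ removes the singularity at the origin), and carrying the $t$-powers through second-order weights produces the factor $(1+t^2)$. The delicate point — and the part I would expect to require the most care — is keeping the $c$-integration convergent once the dispersive time-weight $(1+c^2)t$ is present, which is precisely where the $(1+c^2)f_z$ hypothesis is consumed.
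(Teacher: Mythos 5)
Your reduction to one-dimensional estimates on the profiles via Proposition \ref{semigrupoT} is the same starting point as the paper, and your $p=\infty$ endpoint is exactly the paper's. The gap is in the other endpoint: the claim that $\|Tf\|_{L^2}$ is finite under the stated hypotheses is false. Corollary \ref{integrabilidadeL2} gives $\|Tf\|_{L^2}^2=\int|\xi|^{-1}|\mathcal{F}_zf(\xi,c)|^2d\xi dc$, and the hypothesis $zf\in L^1_c(L^2_z)\cap L^\infty_c(L^2_z)$ only places $\mathcal{F}_zf(\cdot,c)$ in $H^1_\xi$; it does not make it vanish at $\xi=0$. Since $\mathcal{F}_zf(0,c)=\int f(z,c)\,dz$ is generically nonzero, the integral diverges logarithmically at $\xi=0$. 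Concretely, $f(z,c)=e^{-z^2-c^2}$ satisfies every hypothesis of the lemma, yet $Tf\notin L^2(\real^2)$ --- this is the paper's own Corollary \ref{naointegrabilidadeL2}, and the explicit Gaussian computation in Section 3 exhibits the divergence. The same defect sinks your plan for $1<p<2$: the weighted bound $\|\langle(x,y)\rangle^{s}Tg(t)\|_{L^2}<\infty$ with $s>0$ is strictly stronger than $Tg(t)\in L^2$, so it cannot hold either. (Relatedly, the stated range $2\le p\le\infty$ cannot literally include $p=2$; what the paper's proof actually establishes and later uses is a uniform bound for $4\le p\le\infty$ and the $(1+t^2)$ bound for $\|S_2(t)Tf\|_{L^{2p}}$ with $1<p<2$, i.e.\ for $L^q$ with $2<q<4$.)

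The paper avoids $p=2$ entirely. For the uniform range it takes $p=4$ as the lower endpoint, via Remark \ref{estaremL4}: $\|Tg\|_{L^4}\lesssim\|g\|_{L^1_c(L^2_z)}+\|g\|_{L^\infty_c(L^2_z)}$, which needs only $L^2_z$ conservation and works because the kernel $|c-c'|^{-1/2}$ is locally integrable; interpolation with $p=\infty$ then gives the uniform bound. For $2<q<4$ it writes $q=2p$ with $1<p<2$, applies Proposition \ref{integrabilidadeLp} to the profile norms $\|S_1((1+c^2)t)f(c)\|_{L^p_z}$, and puts the spatial weight on the one-dimensional profiles rather than on the two-dimensional transform: H\"older in $z$ gives $\|g\|_{L^p_z}\lesssim\|(1+|z|)g\|_{L^2_z}$, and the Galilean identity $\|zS_1(\tau)v\|_{L^2}\le\|zv\|_{L^2}+2|\tau|\|v_z\|_{L^2}$ (which you correctly listed among your tools) turns this into $\Theta(c,t)=\|f(c)\|_{L^2}+\|zf(c)\|_{L^2}+2(1+c^2)t\|f_z(c)\|_{L^2}$. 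The hypotheses ensure $\Theta(\cdot,t)\in L^1_c\cap L^\infty_c$ with norm $O(1+t)$, and the splitting of $\int|c-c'|^{-1/p}\Theta(c,t)\Theta(c',t)\,dc\,dc'$ as in Remark \ref{estaremL4} yields the $(1+t^2)$ growth. So the ingredients you assembled are the right ones, but they must be applied to the profiles in the $z$ variable, where the weighted norms are actually finite, not to $Tf$ in the $(x,y)$ variables, where they are not.
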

\begin{proof}
From Proposition \ref{semigrupoT} and Remark \ref{estaremL4},
\begin{align*}
&\|S_2(t)Tf\|_{L^4} = \|T(S_1((1+c^2)t)f(c))\|_{L^4} \\\lesssim\ & \|S_1((1+c^2)t)f(c)\|_{L^1_c(L^2_z)} + \|S_1((1+c^2)t)f(c)\|_{L^\infty_c(L^2_z)}  = \|f\|_{L^1_c(L^2_z)} + \|f\|_{L^\infty_c(L^2_z)}.
\end{align*}
Furthermore, from Proposition \ref{integrabilidadeLp}, 
$$
\|S_2(t)Tf\|_{L^\infty} = \|T(S_1((1+c^2)t)f(c))\|_{L^\infty} \lesssim \|f\|_{L^1_c(H^1_z)}. 
$$
The estimate for $2<p<\infty$ follows by interpolation. Now take $1<p<2$. By Proposition \ref{integrabilidadeLp},
$$
\|S_2(t)Tf\|_{L^{2p}}\le \int \frac{1}{|c-c'|}\|S_1((1+c^2)t)f(c)\|_{L^p_z}\|S_1((1+(c')^2t))f(c')\|_{L^p_z} dcdc'.
$$
Using Hölder's inequality,
\begin{align*}
\|S_1((1+c^2)t)f(c)\|_{L^p_z}&\le \left\|\frac{1}{1+|z|}\right\|_{L^{\frac{2p}{2-p}}}\|(1+|z|)S_1((1+c^2)t)f(c)\|_{L^2_z}\\&\lesssim \|f(c)\|_{L^2_z} + \|zS_1((1+c^2)t)f(c)\|_{L^2_z}.
\end{align*}
Recall the classical estimate, valid a.e. for $c\in\real$:
$$
\|zS_1((1+c^2)t)f(c)\|_{L^2_z} \le \|zf(c)\|_{L^2_z} + 2(1+c^2)t\|f_z(c)\|_{L^2_z}.
$$
Hence, setting
$$
\Theta(c,t)= \|f(c)\|_{L^2_z} + \|zf(c)\|_{L^2_z} + 2(1+c^2)t\|f_z(c)\|_{L^2_z},
$$
we have
$$
\|S_2(t)Tf\|_{L^{2p}}\lesssim \int \frac{1}{|c-c'|}\Theta(c,t)\Theta(c',t)dcdc'.
$$
By a analogous reasoning to that of Remark \ref{estaremL4} and \eqref{cond1},
$$
\|S_2(t)Tf\|_{L^{2p}}\lesssim 1+t^2.
$$
\end{proof}
\begin{proof}[Proof of Theorem \ref{gwp}]
The theorem will be proved once we show that the unique solution of
\begin{equation}\label{equav}
iv_t + v_{xx} + v_{yy} + \lambda|v+\phi|(v+\phi)=0,\quad v(0)=v_0,\ \phi(t)=S_2(t)\phi_0
\end{equation}
is globally defined. In the following, we perform formal calculations which can be justified with suitable regularization arguments. Multiplying \eqref{equav} by $\bar{v}$, integrating over $\real^2$ and taking the imaginary part, we obtain
$$
\frac{1}{2}\frac{d}{dt}\|v(t)\|_{L^2}^2 + \lambda \parteim \int |v+\phi|(v+\phi)\bar{v} = 0.
$$
and so
$$
\frac{1}{2}\frac{d}{dt}\|v(t)\|_{L^2}^2\lesssim \|\phi\|_{L^\infty} \|v\|_{L^2}^2 + \|v\|_{L^2}\|\phi\|_{L^4}^2
$$
Since, by the previous lemma, $\|\phi\|_{L^4}$ and $\|\phi\|_{L^\infty}$ are uniformly bounded, Gronwall's lemma implies that
$$
\|v(t)\|_{L^2}\lesssim C(t),
$$
where $C:\real^+\to\real$ is a non decreasing continuous function. Now, multiplying \eqref{equav} by $\overline{v}_t$, integrating and taking the real part,
$$
\frac{d}{dt}\left(\frac{1}{2}\|\nabla v(t)\|_{L^2}^2 - \frac{\lambda}{3}\|v+\phi\|_{L^3}^3\right) =  \parteim \lambda\int \nabla\left(|v+\phi|(v+\phi)\right)\cdot \nabla \bar{\phi}. 
$$
Recalling that $\nabla \phi(t)= S_2T(f_z,-cf_z)$ and applying the previous lemma,
\begin{align*}
&\frac{d}{dt}\left(\frac{1}{2}\|\nabla v(t)\|_{L^2}^2 - \frac{\lambda}{3}\|(v+\phi)(t)\|_{L^3}^3\right)\\\lesssim\ & \int |\nabla \phi(t)|^2|\phi(t)| + |\nabla\phi(t)||\phi(t)||v(t)| + |\nabla \phi(t)|^2|v(t)| + |\nabla\phi(t)||\nabla v(t)||v(t)| \\ \lesssim\ & \|\phi(t)\|_{L^4}\|\nabla\phi(t)\|_{L^{8/3}}^2 + \|\phi(t)\|_{L^4}\|\nabla \phi(t)\|_{L^4}\|v(t)\|_{L^2}\\&+\|\nabla \phi(t)\|_{L^4}^2\|v(t)\|_{L^2} + \|\nabla\phi(t)\|_{L^\infty}\|\nabla v(t)\|_{L^2}\|v(t)\|_{L^2}\\\lesssim\ & (1+t)^4 + C(t) + C(t)\|\nabla v(t)\|_2.
\end{align*}
Define the energy
$$
E(t)=\frac{1}{2}\|\nabla v(t)\|_{L^2}^2 - \frac{\lambda}{3}\|v+\phi\|_{L^3}^3.
$$
From Gagliardo-Nirenberg's inequality,
$$
\|v+\phi\|_{L^3}^3\lesssim \|v\|_{L^3}^3 + \|\phi\|_{L^3}^3\lesssim \|\nabla v\|_{L^2}\|v\|_{L^2}^2 + (1+t)^6\lesssim C(t)^2\|\nabla v\|_{L^2} + (1+t)^6.
$$
Let $T_{max}$ be the maximal time of existence of the solution of \eqref{equav}. Then, for any $0<t_0<t<T_{max}$, 
$$
\|\nabla v(t)\|_{L^2}^2\lesssim |E(t_0)| + C(t)^2\|\nabla v(t)\|_{L^2} + (1+t)^6 + \int_{t_0}^t \left((1+s)^4 + C(s) + C(s)\|\nabla v(s)\|_2\right)ds.
$$
Suppose, by contradiction, that $T_{max}<\infty$. Since $\|\nabla v(t)\|_{L^2}\to\infty$ as $t\to T_{max}$, one has, for $t>t_1$ sufficiently close to $T_{max}$,
$$
\|\nabla v(t)\|_{L^2}^2\lesssim \int_{t_0}^t C(s)\|\nabla v(s)\|_2 ds.
$$
However, this implies that $\|\nabla v(t)\|_{L^2}^2$ is bounded on $[t_1,T_{max}]$, which is absurd.
\end{proof}

\section{Further comments}

The plane wave transform has many interesting properties. Section 4 should be regarded as a starting point for the application of this transform to obtain new local well-posedness results for equations such as the wave equation, the heat equation and KdV equation. Furthermore, Theorem \ref{grandesdados} points out that this theory is not simply an appendix to the usual $H^s$ local well-posedness theory: it gives new information about $H^s$ solutions and may shine some new light on global existence criteria for these equations.

It is important to recall that the plane transform is built upon solutions of the transport equation. Other equations may give birth to other transforms: for example, consider
$$
\omega\cdot \nabla u = iu,\quad u=u(x,y),\ \omega=(\omega_1,\omega_2)\in \real^2,\ |\omega|=1.
$$
Solutions of this equation are of the form $u(x,y)=e^{i\omega\cdot(x,y)}f(\omega^\bot\cdot (x,y))$, $\omega^\bot=(-\omega_2,\omega_1)$, that is, functions that are periodic in the $\omega$ direction. By varying the direction of $\omega$, we may arrive to a similar construction to that of the transform $T$. It would be especially important to find a construction which can be applied to other spatial domains.

One may also try to extend the plane wave transform to more spatial variables, either by applying only to the two first variables or by iteration (applying to the first and to the $i$-th variable). The functional tools of section 3 should be easily applicable to either extension.

\section{Acknowledgements}
The authors were partially suported by Fundação para a Ciência e Tecnologia, through the grant UID/MAT/04561/2013. The first author was also partially supported by Fundação para a Ciência e Tecnologia, through the grant SFRH/BD/96399/2013. The authors thank Rémi Carles for important discussions and comments.

\appendix

\section{Appendix}
In this appendix, we give another proof of
$$
Tf\equiv 0 \Rightarrow f\equiv 0.
$$
This proof is based on arguments that do not involve the expression of $T$ in terms of the Fourier transform. For the sake of simplicity, we shall assume that $f$ is good enough so that all integrations are automatically justified.
\begin{prop}
Given $f\in C(\real^2)$ with exponential decay at infinity, if $Tf\equiv 0$, then $f\equiv 0$.
\end{prop}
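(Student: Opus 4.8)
The plan is to recover every polynomial moment of $f$ directly from the vanishing of $Tf$, and then to use the exponential decay to conclude that a function with all moments zero must vanish. The point is to do this without ever invoking the representation of $T$ through the Fourier transform (Proposition \ref{propformula}): I only manipulate the defining integral and, at the very end, use the analyticity of a Laplace-type transform of $f$ itself.

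First I would fix $n\in\nat$ and $y\in\real$, multiply $Tf\equiv 0$ by $x^n$, integrate in $x$, and unfold the definition of $T$. After the change of variables $z=x-cy$ (with $c$ held fixed) and an application of Fubini's theorem, I obtain
\[
\int_\real (Tf)(x,y)\,x^n\,dx=\int_{\real^2} f(x-cy,c)\,x^n\,dc\,dx=\int_{\real^2} f(z,c)(z+cy)^n\,dz\,dc.
\]
Both the change of variables and Fubini are justified by the exponential decay of $f$, which makes $\int_{\real^2}|f(z,c)|\,|z+cy|^n\,dz\,dc$ finite. Expanding $(z+cy)^n$ by the binomial theorem turns the right-hand side into a polynomial in $y$,
\[
\int_\real (Tf)(x,y)\,x^n\,dx=\sum_{k=0}^n\binom{n}{k}y^k M_{n-k,k},\qquad M_{j,k}:=\int_{\real^2} f(z,c)\,z^j c^k\,dz\,dc.
\]
Since $Tf\equiv 0$, the left-hand side is $0$ for every $y$, so this polynomial is identically zero and each coefficient must vanish. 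Letting $n$ range over $\nat$ and $k$ over $0,\dots,n$ yields $M_{j,k}=0$ for all $j,k\ge 0$, i.e.\ \emph{all} moments of $f$ vanish.

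It remains to deduce $f\equiv 0$ from the vanishing of all its moments, and this is precisely where the exponential decay is indispensable. I would introduce the two-sided Laplace transform $F(s,t)=\int_{\real^2} f(z,c)\,e^{sz+tc}\,dz\,dc$. If $|f(z,c)|\lesssim e^{-a(|z|+|c|)}$ for some $a>0$, then $F$ converges and is holomorphic in the strip $\{\,|\partere s|<a,\ |\partere t|<a\,\}$, a connected open set containing the origin and both imaginary axes. Differentiating under the integral sign, the Taylor coefficients of $F$ at the origin are exactly $M_{j,k}/(j!\,k!)$, all of which vanish; hence $F\equiv 0$ near the origin and, by the identity theorem for holomorphic functions on a connected domain, $F\equiv 0$ throughout the strip. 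Restricting $F$ to the imaginary axes recovers the ordinary Fourier transform of $f$, which is therefore identically zero, and Fourier inversion gives $f\equiv 0$.

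The main obstacle is this last determinacy step: passing from ``all moments vanish'' to ``$f\equiv 0$''. A raw moment hypothesis is not enough in general (the moment problem can be indeterminate), and the exponential-decay assumption is exactly what repairs this, by guaranteeing a genuine holomorphic extension to which the identity theorem applies. The remaining points are routine technicalities: justifying Fubini and differentiation under the integral, which again follow from the decay of $f$. I would emphasize that the only transforms used on the analytic side are the Laplace/Fourier transforms of $f$ itself, never the link between $T$ and $\mathcal F$, so the argument is genuinely independent of Proposition \ref{propformula}.
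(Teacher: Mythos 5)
Your proof is correct, but it takes a genuinely different route from the one in the appendix. You extract every polynomial moment $M_{j,k}=\int f(z,c)z^jc^k\,dz\,dc$ by observing that $\int x^n(Tf)(x,y)\,dx$ is a polynomial in $y$ whose coefficients are the $M_{n-k,k}$, and you then settle the determinacy question complex-analytically: the two-sided Laplace transform of $f$ is holomorphic on a polystrip, has identically vanishing Taylor series at the origin, hence vanishes on the whole polystrip by the identity theorem, and its restriction to the imaginary axes kills $\mathcal{F}f$. The paper argues instead by integral geometry: from $Tf\equiv 0$ it deduces that the integral of $f$ over every non-vertical strip vanishes, rotationally averages to reduce to a radial function $\tilde f$ satisfying the Abel-type identity $\int_\epsilon^\infty \tilde f(r)\,r\,(r^2-\epsilon^2)^{-1/2}\,dr=0$ for all $\epsilon>0$, shows $\tilde f\equiv 0$ by a monotone telescoping argument with the kernels $h_n$, and then translates the origin to get pointwise vanishing of $f$. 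Both arguments avoid the formula of Proposition \ref{propformula} linking $T$ to the Fourier transform, which was the stated purpose of the appendix, although yours reintroduces the Fourier--Laplace transform of $f$ itself at the last step, whereas the paper's proof is entirely real-variable. Your route is shorter and makes transparent exactly where exponential decay is needed (determinacy of the moment problem via analyticity of $\widehat f$ in a strip); the paper's route is longer but yields as a by-product the geometric fact recorded in its closing remark, namely that the integrals of $f$ over all strips determine its integrals over all balls. To make your argument fully airtight you should note explicitly that $F(s,t)$ is jointly holomorphic on the polystrip (by local uniform convergence of the integral) and invoke the identity theorem for connected domains in $\complex^2$; both points are standard under your decay hypothesis.
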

\begin{proof}
\textit{Step 1. Reduction to radial case.} Since $Tf\equiv 0$,
$$
\int f(x-cy,c)dc = 0, \ (x,y)\in\real^2.
$$
Then, given any $h\in \real$,
$$
\int_0^1 \int f( x + hz - cy,c) dc dz =0,
$$
which means that the integration of $f$ over any non-vertical strip is null.
Define
$$
\tilde{f}(z,c)=\frac{1}{2\pi}\int_0^{2\pi} f(A_\theta(z,c)) d\theta,\quad A_\theta(z,c)=(c\cos\theta - z\sin\theta, c\sin\theta + z\cos\theta)
$$
Then, for any $\epsilon,\delta>0$ and any strip $S= ]\epsilon-\delta,\epsilon+\delta[\times\real$,
$$
\int_S \tilde{f}(z,c) dcdz = \frac{1}{2\pi}\int_0^{2\pi} \int_S f(A_\theta(z,c))dc d\theta = \frac{1}{2\pi}\int_0^{2\pi} \int_{A_\theta^{-1}S}f(z,c) dcdz = 0,
$$
since the rotation of a strip is still a strip. In particular, the function
$$
\epsilon\mapsto \int \tilde{f}(\epsilon,c)dc
$$
is zero. Since $\tilde{f}$ is a radial function, $\tilde{f}(\epsilon,c)$ is an even function, and so
$$
\int_0^\infty \tilde{f}(\epsilon,c) dc =0,\ \epsilon>0.
$$
Hence
$$
0=\int_0^{2\pi} \int_0^\infty \tilde{f}(\epsilon,c) dc d\theta = \int_{\real^2\setminus B_\epsilon(0)} \frac{\tilde{f}(z,c)}{\sqrt{c^2+z^2-\epsilon^2}}dzdc
$$
which implies, in polar coordinates, that
\begin{equation}\label{zeroepsilon}
\int_{\epsilon}^{\infty} \frac{\tilde{f}(r)r}{\sqrt{r^2-\epsilon^2}}dr =0, \ \epsilon>0.
\end{equation}
\textit{Step 2. $\tilde{f}$ must be zero.} We write $\tilde{f}=f^+-f^-$, with $f^+,f^-\ge 0$ and $f^+f^-\equiv 0$. Fix $a\in \real^+$. W.l.o.g., let $(a_n)_{n\in\nat}\subset \real^+$ be a strictly increasing sequence with $a_0=a$ and such that
\begin{equation}\label{sinalf}
\tilde{f}\big|_{[a_{2k},a_{2k+1}]}\ge 0,\quad \tilde{f}\big|_{[a_{2k+1},a_{2k+2}]}\le 0, k\in\nat.
\end{equation}
There are two possibilities: either $a_n$ is bounded or not. We consider the case where $a_n$ is not bounded, the other being quite similar. Define
$$
h_n(r)=\frac{r}{\sqrt{r^2-a_n^2}}\mathds{1}_{(a_n,+\infty)}
$$
and
$$
S_N(r)=\sum_{n=0}^N (-1)^n h_n(r), N\in \nat.
$$
Since $h_n(r)\le h_{n+1}(r)$ for $r>a_{n+1}$, it is easy to check that, for $N$ even, 
\begin{equation}\label{sinalS}
S_N\big|_{(a_{2k},a_{2k+1})}> 0,\quad S_N\big|_{(a_{2k+1},a_{2k+2})}< 0,\quad S_N\big|_{(a_{N},\infty)}> 0, \quad  k\in\nat, 2k+2\le N
\end{equation}
This implies that, for all $N$ even, $f^+S_N\ge 0$. On the other hand, the same reasoning shows that, for all $N$ odd, $f^-S_N\le 0$. Given $k\in \nat$, it follows from \eqref{zeroepsilon} and from the decay of $f$ that
\begin{align*}
\left|\int_0^\infty f^+S_{2k} - \int_0^\infty f^-S_{2k+1}\right| &= \left|\int_0^\infty \sum_{n=1}^{2k} (-1)^n(f^+ - f^-)h_n + \int_0^\infty f^-h_{2k+1}\right| \\&= \left| \sum_{n=1}^{2k} (-1)^n\int_0^\infty fh_n + \int_0^\infty f^-h_{2k+1}\right| = \left| \int_0^\infty f^-h_{2k+1}\right|\to 0,\ k\to\infty
\end{align*}
Since, for any $k$,
$$
\int_0^\infty f^+S_{2k} \ge 0,  \int_0^\infty f^-S_{2k+1}\le 0,
$$
it follows that
$$
\int_0^\infty f^+S_{2k},\ \int_0^\infty f^-S_{2k+1}\to 0,\ k\to \infty.
$$
Set $S(r)=\lim S_N(r)$. By Monotone Convergence Theorem, the functions
$f^+S$ and $-f^-S$ are integrable, positive and have integral over $[0,\infty)$ equal to 0. Hence $f^+S,f^-S\equiv 0$. It follows from \eqref{sinalf} and \eqref{sinalS} that $f^+(r),f^-(r)=0$ for $r>a$. Since $a>0$ is arbitrary, we conclude that $\tilde{f}\equiv 0$.

\textit{Step 3. Conclusion.} Since $\tilde{f}$ is the average of $f$ over any circle centered at the origin and $\tilde{f}\equiv 0$, it follows that $f(0,0)=0$. To prove that $f(z_0,c_0)=0$ for any $(z_0,c_0)\in\real^2$, consider
$$
f_{(z_0,c_0)}(z,c)=f(z+z_0,c+c_0).
$$
Then $Tf_{(z_0,c_0)}\equiv 0$ and therefore, by the previous steps, $f(z_0,c_0)=f_{(z_0,c_0)}(0,0)=0$.
\end{proof}

\begin{nota}
It follows directly from this proof that the integral of $f$ over the collection of all strips in $\real^2$ determines uniquely the values of $f$. This implies that the integral of $f$ over all strips determines the integral of $f$ over all balls. This is not trivial at all, since the value of the latter cannot be obtained from the first ones using algebraic set relations. Furthermore, it is easy to check that, if one starts with a collection of strips with a finite number of possible directions, such a result is no longer valid.
\end{nota}

\small
\noindent \textsc{Sim\~ao Correia}\\
CMAF-CIO and FCUL \\
\noindent Campo Grande, Edif\'icio C6, Piso 2, 1749-016 Lisboa (Portugal)\\
\verb"sfcorreia@fc.ul.pt"\\

\small
\noindent \textsc{Mário Figueira}\\
CMAF-CIO and FCUL \\
\noindent Campo Grande, Edif\'icio C6, Piso 2, 1749-016 Lisboa (Portugal)\\
\verb"msfigueira@fc.ul.pt"\\

\begin{thebibliography}{99}

\bibitem{cazenave} T. Cazenave, \emph{Semilinear Schrödinger Equations}, Courant Institute of Mathematical Sciences (2003)

\bibitem{cazenaveweissler} T. Cazenave, F. Weissler, \emph{Rapidly decaying solutions of the nonlinear Schrödinger equation}, Commun. Math. Phys. 147, No.1, 75-100 (1992)

\bibitem{chungyeom} S.-Y. Chung, Y. Yeom, \emph{An Integral Transformation and its Applications to Harmonic Analysis on the Space of Solutions of Heat Equation}, Publ RIMS, Kyoto Univ. 35, 737-755  (1999)

\bibitem{correiafigueira} S. Correia, M. Figueira, \emph{The hyperbolic nonlinear Schrödinger equation}, arXiv:1510.08745

\bibitem{gallo} C. Gallo, \emph{Schrödinger group on Zhidkov spaces}, Adv. Diff. Eq.
 9, No. 5-6, 509-538 (2004)

\bibitem{glassey} R. T. Glassey, \emph{On the blowing up of solutions to the Cauchy problem for
nonlinear Schrödinger equations}, J. Math. PhEs. 18, 7794-7797 (1977)

\bibitem{hayashi} N. Hayashi, P. I. Naumkin, \emph{On the quadratic nonlinear Schrödinger equation in three space
dimensions}. Internat. Math. Res. Notices 3, 115-132 (2000)

\bibitem{hayashisaitoh} N. Hayashi, S. Saitoh, \emph{Analyticity and global existence of small solutions to some nonlinear Schrödinger equations}, Comm. Math. Phys.
129, No.1, 27-41 (1990)

\bibitem{kato} T. Kato, \emph{On nonlinear Schrödinger equations}, Ann. Inst. Henri Poincaré, Phys. Théor. 46, 113-129 (1987)

\bibitem{kato1} T. Kato, \emph{On nonlinear Schrodinger equations. II. $H^s$-solutions and unconditional
well-posedness}, J. Anal. Math. 67, 281-306 (1995)

\bibitem{nakamuraozawa} M. Nakamura, T. Ozawa, \emph{Small solutions to nonlinear Schrcidinger equations in the Sobolev
spaces} J. Anal. Math. 81, 305-329 (2000)

\bibitem{sulem} C. Sulem, J.P. Sulem, \emph{Nonlinear Schrödinger Equations: Self-Focusing and Wave Collapse}, Applied Mathematical Sciences 139, Springer (1999)

\bibitem{tao} T. Tao, \emph{Nonlinear Dispersive Equations: Local and Global Analysis}, CBMS Regional Conference Series in Mathematics, vol. 106 (2006)

\end{thebibliography}
\end{document}